\numberwithin{equation}{section}
\theoremstyle{plain}
\newtheorem{thm}{Theorem}[section]
\newtheorem{cor}[thm]{Corollary}
\newtheorem{lemma}[thm]{Lemma}
\theoremstyle{definition}
\newtheorem{example}{Example}
\newcommand{\del}{\backslash}
\DeclareMathOperator{\cl}{cl}
\DeclareMathOperator{\cy}{cy}
\title[The natural matroid of a polymatroid]{The natural matroid of an
  integer polymatroid} \author[J.~Bonin]{Joseph E.~Bonin}
\address[J.~Bonin]
{Department of Mathematics\\ The George Washington University\\
  Washington, D.C. 20052, USA} \email{jbonin@gwu.edu}
\author[C.~Chun]{Carolyn Chun}
\address[C.~Chun]{United States Naval Academy\\
  Mathematics Department\\
  Annapolis, MD, 21402, USA} \email{chun@usna.edu}
\author[T.~Fife]{Tara Fife} \address[T.~Fife]{School of Mathematical
  Sciences \\ Queen Mary University of London\\
  Mile End Road, London E1 4NS, United Kingdom}
\email{fi.tara@gmail.com} \date{\today}
\begin{document}

\begin{abstract}
  The natural matroid of an integer polymatroid was introduced to show
  that a simple construction of integer polymatroids from matroids
  yields all integer polymatroids.  As we illustrate, the natural
  matroid can shed much more light on integer polymatroids.  We focus
  on characterizations of integer polymatroids using their bases,
  their circuits, and their cyclic flats along with the rank of each
  cyclic flat and each element; we offer some new characterizations
  and insights into known characterizations.
\end{abstract}

\maketitle

\section{Introduction}\label{sec:intro}

A \emph{polymatroid} is a pair $P=(E,\rho)$ where $E$ is a finite set
and the real-valued function $\rho: 2^E\to \mathbb{R}$, the \emph{rank
  function} of $P$, has the following properties:
\begin{enumerate}
\item $\rho$ is \emph{normalized}, that is, $\rho(\emptyset) = 0$,
\item $\rho$ is \emph{non-decreasing}, that is, if
  $A\subseteq B\subseteq E$, then $\rho(A)\leq \rho(B)$, and
\item $\rho$ is \emph{submodular}, that is,
  $\rho(A\cup B)+\rho(A\cap B)\leq \rho(A) + \rho(B)$ for all
  $A, B\subseteq E$.
\end{enumerate}
Less formally, we often talk about a polymatroid $\rho$ on $E$.  A
\emph{$k$-polymatroid}, where $k\in\mathbb{R}$ and $k>0$, is a
polymatroid $(E,\rho)$ for which $\rho(e)\leq k$ for all $e \in E$.
For much of this paper, we are concerned with \emph{integer
  polymatroids} (also called \emph{discrete polymatroids}), that is,
polymatroids $\rho$ where the rank $\rho(A)$ of each set $A$ is in the
set $\mathbb{N}$ of nonnegative integers.

Intuitively, a matroid (an integer $1$-polymatroid) can be thought of
as a configuration of points, lines, planes, and so on, in which each
of the elements that make up these objects has rank $0$ (loops) or $1$
(points).  An integer polymatroid is the natural generalization in
which the elements are not limited to points and loops; we also allow,
as the elements, lines (elements of rank $2$), planes (elements of
rank $3$), and so on.  Not surprisingly, every integer polymatroid
comes from a matroid, as the following result of
\cite{helgason,ll,mcd} states.

\begin{thm}\label{thm:reppoly}
  A function $\rho:2^E \to \mathbb{N}$ is an integer polymatroid if
  and only if there is a matroid $M$ on a set $E'$ and a function
  $\phi:E \to 2^{E'}$ with
  $\rho(A) = r_M\bigl(\bigcup_{e\in A}\phi(e)\bigr)$ for all
  $A \subseteq E$.
\end{thm}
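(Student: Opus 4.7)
\medskip\noindent
\textbf{Proof plan.} I would attack the two directions separately. The ``if'' direction is an exercise in pushing properties of $r_M$ through the set operator $A\mapsto U_A:=\bigcup_{e\in A}\phi(e)$. Normalization is $r_M(\emptyset)=0$ and integrality of $\rho$ is inherited from $r_M$. Monotonicity follows because $A\subseteq B$ implies $U_A\subseteq U_B$. For submodularity of $\rho$, I would use the identity $U_{A\cup B}=U_A\cup U_B$ and the containment $U_{A\cap B}\subseteq U_A\cap U_B$ (the latter used with monotonicity of $r_M$), combined with the submodularity of $r_M$ itself.

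For the ``only if'' direction I would build what the paper calls the natural matroid. Choose pairwise disjoint sets $\phi(e)$ with $|\phi(e)|=\rho(e)$ and set $E'=\bigsqcup_{e\in E}\phi(e)$. On $2^{E'}$ define
\[
r_M(S)=\min_{A\subseteq E}\bigl(|S\setminus U_A|+\rho(A)\bigr),\qquad U_A=\bigcup_{e\in A}\phi(e).
\]
The first checkpoint is $r_M(U_B)=\rho(B)$. Taking $A=B$ gives the upper bound $\rho(B)$. For the lower bound, I would invoke the standard consequence of submodularity and monotonicity,
\[
\rho(B)\leq \rho(A\cup B)\leq \rho(A)+\rho(B\setminus A)\leq \rho(A)+\sum_{e\in B\setminus A}\rho(e),
\]
and then note that disjointness of the $\phi(e)$ gives $\sum_{e\in B\setminus A}\rho(e)=|U_B\setminus U_A|$. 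So every competitor in the min already exceeds $\rho(B)$.

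The remaining, and principal, task is showing that $r_M$ is a matroid rank function. The bound $r_M(S)\leq|S|$ comes from $A=\emptyset$; nonnegativity and $r_M(\emptyset)=0$ are immediate; monotonicity follows by plugging a minimizer for the larger set into the formula for the smaller set. The real work is submodularity. Given $S,T\subseteq E'$ with minimizers $A$ and $B$, I would bound
\[
r_M(S\cup T)+r_M(S\cap T)\leq \bigl(|(S\cup T)\setminus U_{A\cup B}|+\rho(A\cup B)\bigr)+\bigl(|(S\cap T)\setminus U_{A\cap B}|+\rho(A\cap B)\bigr).
\]
Submodularity of $\rho$ controls the rank terms, and the disjointness of the $\phi(e)$ gives $U_{A\cup B}=U_A\cup U_B$ and $U_{A\cap B}=U_A\cap U_B$, reducing the cardinality piece to the elementary inequality
\[
|(S\cup T)\setminus(U_A\cup U_B)|+|(S\cap T)\setminus(U_A\cap U_B)|\leq |S\setminus U_A|+|T\setminus U_B|,
\]
which a four-case check on the membership pattern of each $x\in E'$ in $(S,T,U_A,U_B)$ settles. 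I expect this cardinality inequality and the bookkeeping that combines it with submodularity of $\rho$ to be the one step that is not fully routine; everything else is definition-chasing.
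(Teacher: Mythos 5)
Your proposal is correct, and both directions check out: the four-case membership check does establish the cardinality inequality $|(S\cup T)\setminus(U_A\cup U_B)|+|(S\cap T)\setminus(U_A\cap U_B)|\leq |S\setminus U_A|+|T\setminus U_B|$, and combined with submodularity of $\rho$ this yields submodularity of $r_M$; together with $0\leq r_M\leq|\cdot|$ and monotonicity, that suffices for the rank axioms. The construction you use is exactly the paper's natural matroid, and your verification that $r_M(U_B)=\rho(B)$ is the same two-line estimate the paper displays. The one genuine difference is how the hard step --- that the min formula defines a matroid rank function --- is discharged. The paper outsources it entirely to Theorem \ref{thm:firstgen} (McDiarmid's theorem on submodular functions defined on a lattice of sets closed under intersection), whereas you prove it directly. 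Your direct argument works precisely because the lattice $\{U_A : A\subseteq E\}$ is closed under both union and intersection, so the join is the union and the cardinality bookkeeping reduces to a pointwise check; for a general lattice as in Theorem \ref{thm:firstgen}, where $A\vee B$ may strictly contain $A\cup B$, the argument needs more care. What you gain is a self-contained proof; what the paper gains by quoting the general theorem is both brevity and a tool it reuses later (the same theorem underlies the matroid-union description in Theorem \ref{thm:decompunion}, and its description of the independent sets gives Corollary \ref{cor:indepinnatmtd}, which your rank-function route would still need to derive separately).
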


Helgason \cite{helgason} introduced the natural matroid to prove this
result.  Geometrically, we get the natural matroid by, for each
element $e$ of $E$, replacing $e$ by a set $\phi(e)$ of $\rho(e)$
points that are placed freely in $e$; thus, a line is replaced by two
points that are put freely on the line, and a plane by three points
that are placed freely on the plane, and so on.  (Section
\ref{sec:nat} has a precise definition of the natural matroid.)  Many
important properties of integer polymatroids are closely linked to
properties of its natural matroid.  For instance, Oxley, Semple, and
Whittle \cite{splitter} showed that an integer $2$-polymatroid is
$3$-connected if and only if it has no loops and its natural matroid
is $3$-connected.  We study the natural matroid in its own right.

In Section \ref{sec:nat}, we review the definition of the natural
matroid and prove two results that make it easy to verify that a
matroid $M$ is the natural matroid of an integer polymatroid $\rho$.
We show that, for an integer polymatroid $\rho$ on $E$ that is the sum
of the rank functions of matroids $M_1,M_2,\ldots,M_k$ on $E$, the
natural matroid of $\rho$ is the matroid union of certain extensions
of $M_1,M_2,\ldots,M_k$ by loops and elements parallel to those in
these matroids.

Herzog and Hibi \cite{HerzogHibi} treat characterizations of integer
polymatroids using bases and exchange properties.  In Section
\ref{sec:bases}, we show how these results follow easily by observing
that the bases of an integer polymatroid are the type vectors of the
bases of its natural matroid.

Viewing the bases of an integer polymatroid as the type vectors of the
bases of its natural matroid suggests developing an analogous theory
for circuits.  We do this in Section \ref{sec:circuits}, where, in
Theorem \ref{thm:circchar}, we introduce circuit axioms for integer
polymatroids.

Cyclic flats of matroids, along with their ranks, provide relatively
compact descriptions of matroids that allow one to focus on crucial
features when, for instance, defining certain matroid constructions
(e.g., see \cite{intertwine,diffconfig,cyc,jens}); this perspective is
also useful in applications such as coding theory (e.g., see
\cite{coding}).  In Section \ref{sec:cyclic}, we show that some
results about cyclic flats lift from the natural matroid of an integer
polymatroid to the polymatroid.  In the case of integer polymatroids,
this gives another perspective on recent work by Csirmaz
\cite{cyclicpoly} characterizing all polymatroids via their cyclic
flats and the ranks of these flats and of singleton sets.  A key
result behind this characterization is the formula that gives the rank
function of a polymatroid $\rho$ on $E$ using only its values on
cyclic flats and singleton sets, namely,
$$\rho(A) = \min\{\rho(X) + \sum_{i\in A-X}\rho(i) \,:\, X \in
\mathcal{Z}_\rho\},$$ where $\mathcal{Z}_\rho$ is the lattice of
cyclic flats of $\rho$.  For a subset $A$ of $E$, we consider the set
$\mathcal{R}_\rho(A)$ of cyclic flats $X$ that yield this minimum.  We
show that $\mathcal{R}_\rho(A)$ is a sublattice of $\mathcal{Z}_\rho$,
we identify its least and greatest elements, and we show that each
pair of flats in $\mathcal{R}_\rho(A)$ is a modular pair.

Our matroid notation follows Oxley \cite{oxley}.  For a positive
integer $n$, let $[n]$ be the set $\{1,2,\ldots,n\}$.  We often take
the ground set of an integer polymatroid $\rho$ to be $[n]$ since this
provides a natural correspondence between the elements of $\rho$ and
the entries in $n$-tuples.  For $n\in\mathbb{N}$, the set of
nonnegative integers, let $[n]_0$ be the set $\{0,1,2,\ldots,n\}$.

For a polymatroid $\rho$ on $E$ and for $A\subseteq E$, the
\emph{deletion $\rho_{\del A}$} and \emph{contraction $\rho_{/A}$},
both on $E-A$, are defined by $\rho_{\del A}(X) = \rho(X)$ and
$\rho_{/A}(X) = \rho(X\cup A)-\rho(A)$ for all $X\subseteq E-A$. The
\emph{minors} of $\rho$ are the polymatroids of the form
$(\rho_{\del A})_{/B}$ (equivalently, $(\rho_{/B})_{\del A}$) for
disjoint subsets $A$ and $B$ of $E$. The \emph{$k$-dual} $\rho^*$ of a
$k$-polymatroid $\rho$ on $E$ is the $k$-polymatroid that is given by
$\rho^*(X)=k|X|-\rho(E)+\rho(E-X)$ for all $X\subseteq E$.  The
\emph{direct sum} $\rho_1\oplus\rho_2$ of polymatroids $\rho_1$ and
$\rho_2$ on disjoint sets $E_1$ and $E_2$ is defined by
$(\rho_1\oplus\rho_2)(X) = \rho_1(X\cap E_1)+\rho_2(X\cap E_2)$ for
$X\subseteq E_1\cup E_2$.  A polymatroid that is not a direct sum of
two polymatroids on nonempty sets is \emph{connected}.

\section{The natural matroid of an integer polymatroid}\label{sec:nat}

The construction of the natural matroid uses Theorem
\ref{thm:firstgen} below, due to McDiarmid \cite{mcd}, which
strengthens an earlier result of Edmonds and Rota.  (Theorem
\ref{thm:firstgen} is treated in \cite{oxley,Welsh}.)  Consider a
collection $L$ of subsets of a set $E$ that includes $E$ and
$\emptyset$, and that is closed under intersection; thus, under
inclusion, $L$ is a lattice, and for $A,B\in L$, their meet
$A\wedge B$ is $A\cap B$, but their join $A\vee B$ need not be
$A\cup B$.  For such a lattice $L$, a function
$\sigma : L \to \mathbb{N}$ is \emph{submodular} if
$\sigma(A \vee B) + \sigma(A \cap B)\leq \sigma(A) + \sigma(B)$ for
all $A, B \in L$.

\begin{thm}\label{thm:firstgen}
  Let $L$ be a lattice of subsets of $E$ that contains $\emptyset$ and
  $E$, and is closed under intersection.  Let
  $\sigma : L \to \mathbb{N}$ be submodular with
  $\sigma(\emptyset) = 0$. Define $r : 2^E \to \mathbb{N}$ by
  \begin{equation}\label{eq:subtorank}
    r(Y) = \min\{\sigma(S) + |Y - S| \,:\, S \in L\},
  \end{equation}
  for $Y \subseteq E$.  The function $r$ is the rank function of a
  matroid on $E$; its independent sets are the subsets $I$ of $E$ for
  which $|I \cap S| \leq \sigma(S)$ for all $S \in L$. 
\end{thm}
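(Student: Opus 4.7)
The plan is to verify the standard axioms for a matroid rank function, and then deduce the description of the independent sets directly from the formula for $r$.

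I would begin with the easy properties. Choosing $S = \emptyset$ in the minimum gives $r(Y) \leq \sigma(\emptyset) + |Y| = |Y|$, so in particular $r(\emptyset) = 0$, and nonnegativity is immediate since both terms in \eqref{eq:subtorank} are nonnegative. For the unit-increase property, note that for any $S \in L$ and any $e \in E$, the difference $|(Y \cup \{e\}) - S| - |Y - S|$ is $0$ or $1$; taking $S$ to be optimal for $Y$ yields $r(Y \cup \{e\}) \leq r(Y) + 1$, and monotonicity $r(Y) \leq r(Y \cup \{e\})$ follows by the same kind of substitution with $S$ optimal for $Y \cup \{e\}$.

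The main obstacle is submodularity of $r$. Given $A, B \subseteq E$, I would pick $S_A, S_B \in L$ achieving the minima in the definitions of $r(A)$ and $r(B)$. Submodularity of $\sigma$ on $L$ gives
$$\sigma(S_A) + \sigma(S_B) \geq \sigma(S_A \vee S_B) + \sigma(S_A \cap S_B).$$
It then suffices to prove the cardinality inequality
$$|A - S_A| + |B - S_B| \geq |(A \cup B) - (S_A \vee S_B)| + |(A \cap B) - (S_A \cap S_B)|,$$
which I would verify by an elementwise case analysis according to which of $A, B, S_A, S_B$ contain a given $x \in E$; the key point is that $S_A \vee S_B \supseteq S_A \cup S_B$, so avoiding the join is at least as restrictive as avoiding the union, while $S_A \cap S_B$ is unchanged. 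Adding the two displayed inequalities and noting that $S_A \vee S_B, S_A \cap S_B \in L$ are admissible choices for the minima defining $r(A \cup B)$ and $r(A \cap B)$ gives $r(A) + r(B) \geq r(A \cup B) + r(A \cap B)$. Combined with the properties above, this shows $r$ is the rank function of a matroid on $E$.

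For the description of independent sets, a set $I$ is independent iff $r(I) = |I|$. Since the inequality $\sigma(S) + |I - S| \geq |I|$ is equivalent to $\sigma(S) \geq |I \cap S|$, the condition $r(I) = |I|$ holds precisely when $|I \cap S| \leq \sigma(S)$ for every $S \in L$, matching the claimed characterization.
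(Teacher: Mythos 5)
Your proof is correct. Note that the paper itself does not prove this theorem; it is stated as a known result of McDiarmid, with the reader referred to Oxley and Welsh for a treatment. Your argument --- verifying the rank axioms directly, with the submodularity of $r$ reduced to the elementwise cardinality inequality $|A-S_A|+|B-S_B|\geq |(A\cup B)-(S_A\vee S_B)|+|(A\cap B)-(S_A\cap S_B)|$ (using $S_A\vee S_B\supseteq S_A\cup S_B$), and the independence criterion obtained from $|I-S|=|I|-|I\cap S|$ --- is exactly the standard proof found in those references, and all the steps check out.
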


Given an integer polymatroid $\rho$ on a set $E$, its natural matroid
$M_\rho$ is defined as follows.  For each $i\in E$, let $X_i$ be a set
of $\rho(i)$ elements so that the sets $X_i$, for all $i\in E$, are
pairwise disjoint.  For $A\subseteq E$, set
$$X_A = \bigcup_{i\in A}X_i$$ and let $E'=X_E$.  
Let $L=\{X_A\,:\,A\subseteq E\}$.  Now $L$ is a lattice of subsets of
$E'$ with $\emptyset, E'\in L$,
$X_A\vee X_B=X_A\cup X_B = X_{A\cup B}$, and
$X_A\wedge X_B=X_A\cap X_B = X_{A\cap B}$.  Define
$\sigma : L \to \mathbb{N}$ by $\sigma(X_A) = \rho(A)$.  Since $\rho$
is submodular, so is $\sigma$.  The \emph{natural matroid} of $\rho$,
denote $M_\rho$, is the matroid on $E'$ whose rank function is given
by Equation (\ref{eq:subtorank}).  The choice of the sets $X_i$ is not
unique, but the natural matroid is well-defined up to relabeling the
elements in $E'$.

\begin{cor}\label{cor:indepinnatmtd}
  A subset $I$ of $E'$ is independent in $M_\rho$ if and only if
  $|I\cap X_A|\leq \rho(A)$ for every $A\subseteq E$.
\end{cor}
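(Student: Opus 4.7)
The plan is to obtain this as an immediate specialization of Theorem~\ref{thm:firstgen} to the lattice $L=\{X_A:A\subseteq E\}$ and the function $\sigma:L\to\mathbb{N}$ with $\sigma(X_A)=\rho(A)$, both defined in the paragraph preceding the corollary. The matroid $M_\rho$ is, by definition, the matroid produced by that theorem from these data, so its independent sets are precisely the subsets $I\subseteq E'$ satisfying $|I\cap S|\leq\sigma(S)$ for every $S\in L$.

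Concretely, I would first verify that the hypotheses of Theorem~\ref{thm:firstgen} are met: $L$ contains $\emptyset=X_\emptyset$ and $E'=X_E$, the identity $X_A\wedge X_B=X_{A\cap B}$ shows $L$ is closed under intersection, $\sigma(\emptyset)=\rho(\emptyset)=0$ by normalization of $\rho$, and the submodularity of $\sigma$ on $L$ follows from that of $\rho$ on $2^E$ via $\sigma(X_A\vee X_B)+\sigma(X_A\wedge X_B)=\rho(A\cup B)+\rho(A\cap B)\leq\rho(A)+\rho(B)=\sigma(X_A)+\sigma(X_B)$. With these checks in place, Theorem~\ref{thm:firstgen} characterizes the independent sets of $M_\rho$ as those $I\subseteq E'$ with $|I\cap S|\leq\sigma(S)$ for all $S\in L$.

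The final step is simply to rewrite this characterization in the language of the original ground set $E$: every member of $L$ has the form $X_A$ for a unique $A\subseteq E$, and $\sigma(X_A)=\rho(A)$, so the condition $|I\cap S|\leq\sigma(S)$ for all $S\in L$ is identical to $|I\cap X_A|\leq\rho(A)$ for all $A\subseteq E$, which is exactly the stated corollary.

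There is no real obstacle here; the entire content of the corollary is packaged in Theorem~\ref{thm:firstgen}. The only thing worth flagging is the bijection $A\leftrightarrow X_A$ between $2^E$ and $L$, which is what lets us transport the quantifier from $S\in L$ back to $A\subseteq E$ without loss.
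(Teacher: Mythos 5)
Your proposal is correct and matches the paper exactly: the paper offers no separate proof, treating the corollary as an immediate consequence of Theorem~\ref{thm:firstgen} applied to the lattice $L=\{X_A:A\subseteq E\}$ and $\sigma(X_A)=\rho(A)$, which is precisely your argument. (The only quibble is that $A$ need not be unique with $S=X_A$ when $\rho$ has loops, but $\sigma$ is still well-defined and the translation of the quantifier goes through unchanged.)
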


Since $\rho$ is submodular and non-decreasing, if $A,S\subseteq E$,
then
$$\rho(A)\leq \rho(A\cap S)+\sum_{i\in A-S}\rho(i)
\leq \rho(S)+\sum_{i\in A-S}\rho(i).$$ It follows that
$r_{M_\rho}(X_A)=\rho(A)$ for all $A\subseteq E$.  Theorem
\ref{thm:reppoly} follows by letting $M$ be $M_\rho$ and defining
$\phi:E\to 2^{E'}$ by $\phi(i)=X_i$.

The next lemma simplifies proving that a matroid is the natural
matroid of $\rho$.  Recall that two elements $a$ and $b$ of a matroid
$M$ on $E$ are \emph{clones} if the permutation of $E$ given by the
$2$-cycle $(a,b)$ (i.e., switching $a$ and $b$) is an automorphism of
$M$.  We say that $X\subseteq E$ is a \emph{set of clones} if
$a,b\in X$ are clones whenever $a\ne b$.  A \emph{cyclic} set of $M$
is a set $X$ that is a union of circuits, that is, $M|X$ has no
coloops.  A \emph{cyclic flat} is a flat that is cyclic.  It is easy
to prove that, for the set $\mathcal{Z}_M$ of cyclic flats of $M$, we
have, for $Y\subseteq E$,
\begin{equation}\label{eq:rankviacyclicflats}
  r_M(Y) = \min\{r_M(Z)+|Y-Z|\,:\,Z\in\mathcal{Z}_M\}.
\end{equation}

\begin{lemma}\label{lem:shownatural}
  Let $\rho$, $E$, $E'$, $X_i$, and $X_A$ be as above.  A matroid $M$
  on $E'$ is the natural matroid $M_\rho$ of $\rho$ if and only if
  $\mathcal{Z}_M \subseteq\{X_A\,:\,A\subseteq E\}$ and
  $r_M(X_A)=\rho(A)$ whenever $X_A\in \mathcal{Z}_M$.
\end{lemma}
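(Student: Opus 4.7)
For the implication $M = M_\rho \Rightarrow$ the two conditions, the excerpt already notes $r_{M_\rho}(X_A) = \rho(A)$ for every $A$, so the rank condition is automatic on every $X_A \in \mathcal{Z}_{M_\rho}$. To see every cyclic flat of $M_\rho$ is some $X_A$, I use a clone argument. First, any permutation $\pi$ of $E'$ that fixes each $X_i$ setwise is an automorphism of $M_\rho$, because the formula $r_{M_\rho}(Y) = \min_{A \subseteq E}\{\rho(A) + |Y - X_A|\}$ depends on $Y$ only through the quantities $|Y - X_A|$, each of which is invariant under $\pi$. Hence the elements of each $X_i$ form a set of clones. Now if $F$ is a cyclic flat of $M_\rho$ with $a \in F \cap X_i$ and $b \in X_i \setminus F$, then the transposition $(a,b)$ carries $F$ to a cyclic flat $F' = (F \setminus \{a\}) \cup \{b\}$ of the same rank. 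Submodularity gives $r(F \cup \{b\}) + r(F \setminus \{a\}) \leq r(F) + r(F')$; using $r(F') = r(F)$ and, because $F$ is cyclic, $r(F \setminus \{a\}) = r(F)$, we conclude $r(F \cup \{b\}) \leq r(F)$, forcing $b \in \cl(F) = F$, a contradiction. So $X_i \subseteq F$ whenever $F \cap X_i \neq \emptyset$, and $F = X_A$ for $A = \{i \in E : X_i \subseteq F\}$.

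\textbf{Backward direction.} Assuming the two hypotheses, Equation (\ref{eq:rankviacyclicflats}) applied to $M$ becomes $r_M(Y) = \min\{\rho(A) + |Y - X_A| : X_A \in \mathcal{Z}_M\}$, while by definition $r_{M_\rho}(Y) = \min\{\rho(A) + |Y - X_A| : A \subseteq E\}$. Because the former minimum is over a subfamily of the latter, $r_M \geq r_{M_\rho}$ is immediate. For the reverse inequality I apply Equation (\ref{eq:rankviacyclicflats}) to $M_\rho$: the $M_\rho$-minimum at $Y$ is attained by some $X_{A^*} \in \mathcal{Z}_{M_\rho}$, which sits in $\{X_A : A \subseteq E\}$ by the forward direction. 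I then aim to show $X_{A^*} \in \mathcal{Z}_M$, so that $X_{A^*}$ contributes to the $M$-minimum and delivers $r_M(Y) \leq r_{M_\rho}(Y)$.

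\textbf{Main obstacle.} The delicate step is this last one---closing the gap between $\mathcal{Z}_M$ and $\mathcal{Z}_{M_\rho}$ using only the assumption that ranks on $\mathcal{Z}_M$ agree with $\rho$. I plan to tackle it by combining the already-established $r_M \geq r_{M_\rho}$ with the rank identity $r_M(X_A) = \rho(A)$ on $\mathcal{Z}_M$ and with the submodular and monotone structure of $\rho$, aiming to force $\mathcal{Z}_M = \mathcal{Z}_{M_\rho}$; once that equality holds, the two matroids share cyclic flats with matching ranks and therefore coincide by Equation (\ref{eq:rankviacyclicflats}).
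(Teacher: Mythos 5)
Your forward direction is correct and is in substance the paper's argument: the paper also first observes that each $X_i$ is a set of clones and then derives $\mathcal{Z}_{M_\rho}\subseteq\{X_A\,:\,A\subseteq E\}$, doing so by noting that if a circuit $C$ meets $X_i$ in $a$, then $(C-a)\cup b$ is a circuit for every $b\in X_i-C$, so $X_i\subseteq\cl_{M_\rho}(C)$; your submodularity computation with the image flat $F'$ reaches the same conclusion and is equally valid.

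The backward direction contains a genuine gap. You prove $r_M\geq r_{M_\rho}$, but for the reverse inequality you only announce a plan --- to show that the $M_\rho$-optimal cyclic flat $X_{A^*}$ lies in $\mathcal{Z}_M$ and thereby to force $\mathcal{Z}_M=\mathcal{Z}_{M_\rho}$ --- and you do not carry it out. That plan is also not the paper's route, and it aims at the wrong target: the hypotheses give you no handle for placing a prescribed $X_{A^*}$ inside $\mathcal{Z}_M$. The paper instead enlarges the index set of the minimum rather than matching the two lattices of cyclic flats: since $r_M(Y)\leq r_M(W)+|Y-W|$ for every $W\subseteq E'$, each term $r_M(X_A)+|Y-X_A|$ is an upper bound for $r_M(Y)$, while by Equation (\ref{eq:rankviacyclicflats}) some cyclic flat of $M$ --- by hypothesis of the form $X_A$ --- attains equality; hence $r_M(Y)=\min\{r_M(X_A)+|Y-X_A|\,:\,A\subseteq E\}$, a minimum over \emph{all} subsets $A$ of $E$, and comparing this with the defining formula $r_{M_\rho}(Y)=\min\{\rho(A)+|Y-X_A|\,:\,A\subseteq E\}$ finishes the proof once each $r_M(X_A)$ is replaced by $\rho(A)$. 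Note that this final substitution is precisely where the delicacy you sensed lives: it requires $r_M(X_A)=\rho(A)$ for every $A$ that can realize either minimum, not merely for $X_A\in\mathcal{Z}_M$, so in writing up this step you must verify that the rank agreement extends to all the sets $X_A$ that are actually used.
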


\begin{proof}
  Above we showed that $r_{M_\rho}(X_A)=\rho(A)$ for all
  $A\subseteq E$.  Also, $X_i$ is a set of clones of $M_\rho$, so if
  $C$ is a circuit of $M_\rho$ and $a\in C\cap X_i$, then
  $(C-a)\cup b$, for each $b\in X_i-C$, is a circuit of $M_\rho$, and
  so $X_i\subseteq \cl_{M_\rho}(C)$.  Thus,
  $\mathcal{Z}_{M_\rho} \subseteq\{X_A\,:\,A\subseteq E\}$.

  To prove the converse, assume that
  $\mathcal{Z}_M \subseteq\{X_A\,:\,A\subseteq E\}$ and
  $r_M(X_A)=\rho(A)$ whenever $X_A\in \mathcal{Z}_M$.  By
  construction, the rank function of $M_\rho$ is given by
  \begin{align*}
    r_{M_\rho}(Y)
    = &\, \min\{\rho(A)+|Y-X_A|\,:\,A\subseteq E\}\\
    = &\, \min\{r_M(X_A)+|Y-X_A|\,:\,A\subseteq E\}
  \end{align*}
  for all $Y\subseteq E'$.  Now
  \begin{itemize}
  \item if $Y,W\subseteq E'$, then $r_M(Y) \leq r_M(W)+|Y-W|$,
  \item for each $Y$, some cyclic flat $W$ yields equality in that
    inequality, and
  \item $\mathcal{Z}_M\subseteq \{X_A\,:\,A\subseteq E\}$.
  \end{itemize}
  Thus, Equation (\ref{eq:rankviacyclicflats}) gives
  $$r_M(Y) = \min\{r_M(X_A)+|Y-X_A|\,:\,A\subseteq E\}.$$
  Thus, $M$ and $M_\rho$ have the same rank function and so are equal,
  as claimed.
\end{proof}

The containment
$\mathcal{Z}_{M_\rho} \subseteq\{X_A\,:\,A\subseteq E\}$ is proper
since each set $X_i$ is independent.

Two elements are clones in $M$ if and only if they are in exactly the
same cyclic flats of $M$, so we get the following corollary.

\begin{cor}\label{cor:shownatural}
  Let $\rho$, $E$, $E'$, $X_i$, and $X_A$ be as above.  A matroid $M$
  on $E'$ is $M_\rho$ if and only if each set $X_i$ is a set of clones
  and $r_M(X_A)=\rho(A)$ for all $X_A\in\mathcal{Z}_M$.
\end{cor}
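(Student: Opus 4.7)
The plan is to deduce the corollary directly from Lemma \ref{lem:shownatural}, using the observation stated immediately before the corollary: two elements of $M$ are clones if and only if they lie in exactly the same cyclic flats of $M$. So the whole task is to translate between the condition ``$\mathcal{Z}_M \subseteq \{X_A : A \subseteq E\}$'' of Lemma \ref{lem:shownatural} and the condition ``each $X_i$ is a set of clones'' of the corollary, while the rank condition $r_M(X_A)=\rho(A)$ for $X_A\in\mathcal{Z}_M$ appears verbatim in both statements and needs no work.

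For the forward direction, assume $M = M_\rho$. By Lemma \ref{lem:shownatural} we already have $\mathcal{Z}_M \subseteq \{X_A : A \subseteq E\}$ and $r_M(X_A)=\rho(A)$ for all $X_A \in \mathcal{Z}_M$. To show $X_i$ is a set of clones, I would fix $a, b \in X_i$ and note that every cyclic flat of $M$ has the form $X_A$ for some $A\subseteq E$, and for such a set, $a \in X_A \Leftrightarrow i \in A \Leftrightarrow b \in X_A$. Hence $a$ and $b$ belong to exactly the same cyclic flats, so they are clones.

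For the reverse direction, assume the hypotheses of the corollary. The rank condition is already what Lemma \ref{lem:shownatural} demands, so it suffices to verify $\mathcal{Z}_M \subseteq \{X_A : A \subseteq E\}$. Take $Z \in \mathcal{Z}_M$; I would show that $Z$ is a union of sets $X_i$. If $a \in Z \cap X_i$, then for any $b \in X_i$ the clone hypothesis, combined with the clone-versus-cyclic-flat equivalence, ensures that $b$ lies in every cyclic flat containing $a$; in particular $b \in Z$. Thus $X_i \subseteq Z$ whenever $Z$ meets $X_i$, so $Z = X_A$ for $A := \{i \in E : X_i \subseteq Z\}$. Lemma \ref{lem:shownatural} then yields $M = M_\rho$.

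There is no real obstacle here; the only ``content'' is the pre-stated clone-cyclic-flat equivalence, and once that is in hand both directions are short bookkeeping. The only thing to be careful about is to apply that equivalence in the correct direction each time: in the forward step it goes ``same cyclic flats $\Rightarrow$ clones,'' while in the reverse step it goes ``clones $\Rightarrow$ same cyclic flats.''
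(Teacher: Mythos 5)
Your proposal is correct and matches the paper's intent exactly: the paper derives the corollary from Lemma \ref{lem:shownatural} via the same clone-versus-cyclic-flat equivalence stated just before the corollary, merely leaving the two-way translation between ``$\mathcal{Z}_M\subseteq\{X_A\,:\,A\subseteq E\}$'' and ``each $X_i$ is a set of clones'' implicit. Your write-up simply supplies that bookkeeping, and both directions are argued correctly.
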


With Corollary \ref{cor:shownatural}, it follows that the natural
matroid defined above is the same as that obtained by the construction
of iterated principal extensions followed by deletion that is given in
the proof of \cite[Theorem 11.1.9]{oxley}, and which justifies the
geometric view of the natural matroid that is mentioned after Theorem
\ref{thm:reppoly}.

It follows easily from Corollary \ref{cor:shownatural}, or from the
rank functions, that the operations of deletion and taking the natural
matroid commute: if $i\in E$, then
$M_{\rho_{\del i}} = M_\rho\del X_i$.  The same is not true of
contraction.  For $i\in E$ and each $j\in E-i$, fix a subset $Y_j$ of
any $\rho(\{i,j\})-\rho(i)$ elements of $X_j$, and let $E'_{/i}$ be
the union of all such sets $Y_j$.  It follows from Corollary
\ref{cor:shownatural} that $M_{\rho_{/i}} = M_\rho/X_i|E'_{/i}$.  From
Corollary \ref{cor:shownatural}, we also get
$M_{\rho_1\oplus\rho_2}=M_{\rho_1}\oplus M_{\rho_2}$ for integer
polymatroids $\rho_1$ and $\rho_2$; so an integer polymatroid $\rho$
on $E$ with $|E|>1$ is connected if and only if $\rho$ has no loops
and $M_\rho$ is connected.  The number of elements in the natural
matroid is the sum of all terms $\rho(i)$ for $i\in E$, so, for a
positive integer $k$, the natural matroid of the $k$-dual of an
integer $k$-polymatroid $\rho$ can have fewer, the same number of, or
more elements compared to the natural matroid of $\rho$.

Theorem \ref{thm:firstgen}, which we used to construct the natural
matroid, is the key to defining an important matroid operation,
namely, matroid union (see \cite{oxley, Welsh}).  Let
$M_1,M_2,\ldots,M_k$ be matroids on $E$.  Their \emph{matroid union},
denoted $M_1\vee M_2\vee \cdots\vee M_k$, is the matroid on $E$ having
the rank function $r'$ where, for $Y\subseteq E$,
$$r'(Y) = \min\{ r_{M_1}(X)+ r_{M_2}(X)+\cdots+ r_{M_k}(X)
+|Y-X|\,:\,X\subseteq Y\}.$$ The independent sets of
$M_1\vee M_2\vee \cdots\vee M_k$ are the sets of the form
$I_1\cup I_2\cup\cdots\cup I_k$ where $I_j$ is independent in $M_j$.
The matroids $M_1,M_2,\ldots,M_k$ also give an integer polymatroid on
$E$: the function $\rho$ on $2^E$ where, for $X\subseteq E$,
$$\rho(X) = r_{M_1}(X)+ r_{M_2}(X)+\cdots+ r_{M_k}(X),$$ 
is an integer $k$-polymatroid on $E$.  We write this as
$\rho = r_{M_1}+ r_{M_2}+\cdots+ r_{M_k}$ for brevity.  We call the
multiset $\{M_1,M_2,\ldots,M_k\}$ a \emph{decomposition} of $\rho$ and
we say that $\rho$ is \emph{decomposable}.  Not all integer
polymatroids are decomposable.  (See \cite{decomps} for more on this
topic.)  The next theorem identifies the natural matroid of a
decomposable integer polymatroid as a particular matroid union.

\begin{thm}\label{thm:decompunion}
  Let $\{M_1,M_2,\ldots,M_k\}$ be a decomposition of an integer
  polymatroid $\rho$ on $E$.  Let the sets $E'$, $X_i$, and $X_A$ be
  as above.  For each $j\in[k]$, construct $M'_j$ from $M_j$ by, for
  each $i\in E$, adding the elements of $X_i$ parallel to $i$, or as
  loops if $r_{M_j}(i)=0$, and then deleting $i$.  Then the natural
  matroid $M_\rho$ is the matroid union
  $M'_1\vee M'_2\vee \cdots\vee M'_k$.
\end{thm}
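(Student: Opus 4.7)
The plan is to work directly with rank functions. Let $N=M'_1\vee M'_2\vee\cdots\vee M'_k$; its rank function is
$$r_N(Y) = \min\Bigl\{\sum_{j=1}^k r_{M'_j}(X) + |Y-X| \,:\, X\subseteq Y\Bigr\},$$
and the goal is to prove $r_N(Y)=r_{M_\rho}(Y)$ for every $Y\subseteq E'$.

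I first put each $r_{M'_j}$ into a convenient form.  By construction, for every $i\in E$ with $r_{M_j}(i)=1$ the elements of $X_i$ are pairwise parallel in $M'_j$, while if $r_{M_j}(i)=0$ then $X_i$ consists of loops of $M'_j$.  For $X\subseteq E'$, write $A_X=\{i\in E\,:\, X\cap X_i\neq\emptyset\}$.  Since parallel substitution and removal of loops preserve rank, $r_{M'_j}(X)=r_{M_j}(A_X)$, and summing over $j$ yields $\sum_j r_{M'_j}(X)=\rho(A_X)$.  Consequently,
$$r_N(Y)=\min\{\rho(A_X)+|Y-X|\,:\,X\subseteq Y\}.$$

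It then remains to match this minimum to $r_{M_\rho}(Y)=\min\{\rho(A)+|Y-X_A|\,:\,A\subseteq E\}$.  For any $X\subseteq Y$, taking $A=A_X$ gives $X\subseteq X_A$, hence $|Y-X|\geq|Y-X_A|$, which shows $r_N(Y)\geq r_{M_\rho}(Y)$.  Conversely, for any $A\subseteq E$, setting $X=X_A\cap Y$ forces $A_X\subseteq A$, so $\rho(A_X)\leq\rho(A)$ by monotonicity, and $|Y-X|=|Y-X_A|$ by construction; thus $r_N(Y)\leq r_{M_\rho}(Y)$.  Combining the two inequalities gives $r_N=r_{M_\rho}$, so $N=M_\rho$.

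The only point that requires genuine care is the identification $r_{M'_j}(X)=r_{M_j}(A_X)$; everything afterwards is routine bookkeeping on the two min formulas.  A slightly more structural alternative would instead invoke Corollary~\ref{cor:shownatural}: the transposition swapping two elements of a given $X_i$ is an automorphism of every $M'_j$ (since $X_i$ is either a set of pairwise parallel elements or a set of loops of $M'_j$) and hence of $N$, so each $X_i$ is a set of clones of $N$; and the values $r_N(X_A)=\rho(A)$ on each $X_A\in\mathcal{Z}_N$ follow from essentially the same rank computation as above.
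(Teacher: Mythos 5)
Your proposal is correct and follows essentially the same route as the paper: both arguments show that in the min defining the rank of the matroid union, one may restrict to sets of the form $X_A$ (the paper by replacing $X$ with $X_{A_X}$ and noting $r_{M'_j}(X_{A_X})=r_{M'_j}(X)$ and $|Y-X_{A_X}|\leq|Y-X|$; you by the equivalent identity $r_{M'_j}(X)=r_{M_j}(A_X)$ plus the two inequalities), after which $\sum_j r_{M'_j}(X_A)=\rho(A)$ matches the two rank formulas. The key step you flag, $r_{M'_j}(X)=r_{M_j}(A_X)$, is exactly the point the paper also relies on, so no gap.
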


\begin{proof}
  For $X,Y\subseteq E'$, note that
  $ r_{M'_j}(Y\cap X) \leq r_{M'_j}(X)$ for each $j\in [k]$, and that
  $|Y-(Y\cap X)| = |Y-X|$.  Given how $M'_j$ is defined, if $X_A$ is
  the union of all sets $X_i$ such that $X_i\cap X\ne\emptyset$, then
  $ r_{M'_j}(X_A) = r_{M'_j}(X)$, for each $j\in [k]$; also,
  $|Y-X_A|\leq |Y-X|$.  It follows that the rank $r'(Y)$ of $Y$ in
  $M'_1\vee M'_2\vee \cdots\vee M'_k$ is given by
  $$r'(Y)=\min\{r_{M'_1}(X_A)+r_{M'_2}(X_A)+\cdots
  +r_{M'_k}(X_A)+|Y-X_A|\,:\, A\subseteq E\}.$$ Since
  $\rho = r_{M_1}+ r_{M_2}+\cdots+ r_{M_k}$, we get
  $r'(Y)=\min\{\rho(A) +|Y-X_A|\,:\, A\subseteq E\}$, that is,
  $r'(Y) = r_{M_\rho}(Y)$.  Thus, $M_\rho$ is
  $M'_1\vee M'_2\vee \cdots\vee M'_k$.
\end{proof}

An integer polymatroid and its natural matroid may have very different
connections to important classes of matroids.  For instance, for the
binary integer polymatroid on the set of seven lines of the projective
plane $PG(2,2)$ using the construction in Theorem \ref{thm:reppoly},
the natural matroid is $U_{3,14}$, which is not binary.  (The integer
$2$-polymatroids having natural matroids that are binary are
characterized in \cite{natbin}.)  In contrast, the next example and
result give links between transversal, or Boolean, polymatroids and
transversal matroids.

\begin{example}\label{bpmexample}
  If $\rho=r_{M_1}+ r_{M_2}+\cdots+r_{M_k}$ where $r(M_h)\leq 1$ for
  all $h\in[k]$, then $\rho$ is called a \emph{Boolean} polymatroid.
  Helgason \cite{helgason} introduced Boolean polymatroids, calling
  them \emph{covering hypermatroids}.  Some authors call them
  \emph{transversal polymatroids} \cite{conca, kochol, stef6}.  The
  class of Boolean polymatroids is closed under minors; Mat\'u\v{s}
  \cite{Matus} found their excluded minors.  By Theorem
  \ref{thm:decompunion} and the result that a matroid is transversal
  if and only if it is a matroid union of rank-$1$ matroids (see,
  e.g., \cite[Proposition 11.3.7]{oxley}), it follows that the natural
  matroid of a Boolean polymatroid is transversal.

  Another way to see this is via graphs.  A Boolean polymatroid
  $\rho=r_{M_1}+ r_{M_2}+\cdots+r_{M_k}$ has the following
  reformulation using a bipartite graph $G_\rho$.  Assume that
  $E\cap[k]=\emptyset$. The vertex set of $G_\rho$ is $E\cup [k]$, and
  $G_\rho$ has an edge $eh$ if and only if $r_{M_h}(e)=1$.  The rank
  $\rho(A)$ of a set $A\subseteq E$ is the cardinality of the set
  $N(A)$ of neighbors of $A$.  The natural matroid is the transversal
  matroid that is obtained from $G_\rho$ by replacing each element
  $e\in E$ by $\rho(e)$ elements, each of which is adjacent to all
  neighbors of $e$.  (See Figure \ref{fig:NatOfBoolean}.)

  Loopless Boolean $2$-polymatroids have received much attention, in
  part due to another connection with graphs.  Given such a
  polymatroid $\rho=r_{M_1}+ r_{M_2}+\cdots+r_{M_k}$, the graph $G$
  has an edge $e\in E$ incident with a vertex $h\in[k]$ if and only if
  $r_{M_h}(e)=1$.  Then $\rho(A)$, for $A\subseteq E$, is the number
  of vertices that are incident with at least one edge in $A$.  The
  natural matroid of $\rho$ is the bicircular matroid of the graph
  $G'$ that is obtained from $G$ by putting a new edge parallel to
  each nonloop edge of $G$.

  Using $G_\rho$, we see that an integer polymatroid $(E,\rho)$ is
  Boolean if and only if, for some $k$, there is a map
  $N:E\to 2^{[k]}$ with $\rho(X) =\bigl|\bigcup_{x\in X}N(x)\bigr|$
  for all $X\subseteq E$.  (This is Helgason's definition in
  \cite{helgason}.)  Given a set of subsets of $[k]$, there is an
  isomorphism from the lattice of all unions of those sets onto the
  lattice of cyclic flats of a transversal matroid so that the size of
  each union is the rank of its image.  This gives the following
  variant of Theorem \ref{thm:reppoly}.
\end{example}

\begin{figure}
  \centering
  \begin{tikzpicture} [scale=1, inner sep=0.9mm,
    Vertex/.style={circle,draw=black,fill=black!20,thick}]
    \node (a) at (0,1.4) [Vertex] {}; %
    \node (b) at (2,1.4) [Vertex] {};%
    \node (c) at (4,1.4) [Vertex] {};%
    \node (1) at (-0.25,0) [Vertex] {};%
    \node (2) at (0.5,0) [Vertex] {};%
    \node (3) at (1.25,0) [Vertex] {};%
    \node (4) at (2,0) [Vertex] {};%
    \node (5) at (2.75,0) [Vertex] {};%
    \node (6) at (3.5,0) [Vertex] {};%
    \node (7) at (4.25,0) [Vertex] {};%
    \foreach \from/\to in
    {1/a,2/a,3/a,3/b,4/a,4/b,5/b,5/c,6/b,6/c,7/c} \draw[very
    thick](\from)--(\to);%
    \node at (0,1.75) {\small $1$};%
    \node at (2,1.75) {\small $2$};%
    \node at (4,1.75) {\small $3$};%
    \node at (-0.25,-0.4) {\small $e_1$};%
    \node at (0.5,-0.4) {\small $e_2$};%
    \node at (1.25,-0.4) {\small $e_3$};%
    \node at (2,-0.4) {\small $e_4$};%
    \node at (2.75,-0.4) {\small $e_5$};%
    \node at (3.5,-0.4) {\small $e_6$};%
    \node at (4.25,-0.4) {\small $e_7$};%
    \node at (2,-1) {(a)};%
  \end{tikzpicture}
  \hspace{20pt}
  \begin{tikzpicture} [scale=1, inner sep=0.9mm,
    Vertex/.style={circle,draw=black,fill=black!20,thick}]
    \node (a) at (0,1.4) [Vertex] {}; %
    \node (b) at (2,1.4) [Vertex] {};%
    \node (c) at (4,1.4) [Vertex] {};%
    \node (1) at (-0.25,0) [Vertex] {};%
    \node (2) at (0.2,0) [Vertex] {};%
    \node (3) at (0.65,0) [Vertex] {};%
    \node (3x) at (1.1,0) [Vertex] {};%
    \node (4) at (1.55,0) [Vertex] {};%
    \node (4x) at (2,0) [Vertex] {};%
    \node (5) at (2.45,0) [Vertex] {};%
    \node (5x) at (2.9,0) [Vertex] {};%
    \node (6) at (3.35,0) [Vertex] {};%
    \node (6x) at (3.8,0) [Vertex] {};%
    \node (7) at (4.25,0) [Vertex] {};%
    \foreach \from/\to in {1/a,2/a,3/a,3/b,4/a,4/b,5/b,5/c,6/b,6/c,
      3x/a,3x/b,4x/a,4x/b,5x/b,5x/c,6x/b,6x/c,7/c} \draw[very
    thick](\from)--(\to);%
    \node at (0,1.75) {\small $1$};%
    \node at (2,1.75) {\small $2$};%
    \node at (4,1.75) {\small $3$};%
    \node at (-0.25,-0.4) {\small $x_1$};%
    \node at (0.2,-0.4) {\small $x_2$};%
    \node at (0.65,-0.4) {\small $x_3$};%
    \node at (1.1,-0.4) {\small $y_3$};%
    \node at (1.55,-0.4) {\small $x_4$};%
    \node at (2,-0.4) {\small $y_4$};%
    \node at (2.45,-0.4) {\small $x_5$};%
    \node at (2.9,-0.4) {\small $y_5$};%
    \node at (3.35,-0.4) {\small $x_6$};%
    \node at (3.8,-0.4) {\small $y_6$};%
    \node at (4.25,-0.4) {\small $x_7$};%
    \node at (2,-1) {(b)};%
  \end{tikzpicture}

  \begin{tikzpicture}[scale=1]
    \draw[thick, black!20](-2,1)--(0,0)--(2,1);%
    \draw[ultra thick, black](-2,1.05)--(-0.8,0.48);%
    \draw[ultra thick, black](-2,0.97)--(-0.4,0.2);%
    \draw[ultra thick, black](2,1.05)--(0.8,0.48);%
    \draw[ultra thick, black](2,0.97)--(0.4,0.2);%
    \filldraw (-2,1.07) node {} circle (3.3pt);%
    \filldraw (-2.03,0.93) node {} circle (3.3pt);%
    \filldraw (2,1) node {} circle (3.3pt);%
    \node at (-2,1.34) {\small $e_1$};%
    \node at (-2,0.6) {\small $e_2$};%
    \node at (-1,0.8) {\small $e_3$};%
    \node at (-1,0.25) {\small $e_4$};%
    \node at (2,1.34) {\small $e_7$};%
    \node at (1,0.8) {\small $e_5$};%
    \node at (1,0.25) {\small $e_6$};%
    \node at (-1.75,-0.25) {\small $\rho$};%
    \node at (0,-0.5) {(c)};%
  \end{tikzpicture}
  \hspace{40pt}
  \begin{tikzpicture}[scale=1]
    \draw[thick, black](-2,1)--(0,0)--(2,1);%
    \filldraw (0.4,0.2) node {} circle (2.5pt);%
    \filldraw (0.8,0.4) node {} circle (2.5pt);%
    \filldraw (1.2,0.6) node {} circle (2.5pt);%
    \filldraw (1.6,0.8) node {} circle (2.5pt);%
    \filldraw (2,1) node {} circle (2.5pt);%
    \filldraw (-0.4,0.2) node {} circle (2.5pt);%
    \filldraw (-0.8,0.4) node {} circle (2.5pt);%
    \filldraw (-1.2,0.6) node {} circle (2.5pt);%
    \filldraw (-1.6,0.8) node {} circle (2.5pt);%
    \filldraw (-2,1.07) node {} circle (2.5pt);%
    \filldraw (-2.03,0.93) node {} circle (2.5pt);%
    \node at (-2,1.34) {\small $x_1$};%
    \node at (-2,0.6) {\small $x_2$};%
    \node at (-1.6,1.05) {\small $x_3$};%
    \node at (-0.8,0.65) {\small $x_4$};%
    \node at (-1.2,0.85) {\small $y_3$};%
    \node at (-0.4,0.45) {\small $y_4$};%
    \node at (2,1.3) {\small $x_7$};%
    \node at (1.6,1.05) {\small $y_6$};%
    \node at (1.2,0.85) {\small $x_6$};%
    \node at (0.8,0.65) {\small $y_5$};%
    \node at (0.4,0.45) {\small $x_5$};%
    \node at (1.75,-0.25) {\small $M_\rho$};%
    \node at (0,-0.5) {(d)};%
  \end{tikzpicture}
  \caption{For the Boolean polymatroid $\rho$ shown in part (c), part
    (a) shows its associated bipartite graph $G_\rho$, as in Example
    \ref{bpmexample}.  Part (b) shows the bipartite graph that gives
    the transversal matroid that is the natural matroid $M_\rho$,
    which is shown in part (d).}
  \label{fig:NatOfBoolean}
\end{figure}
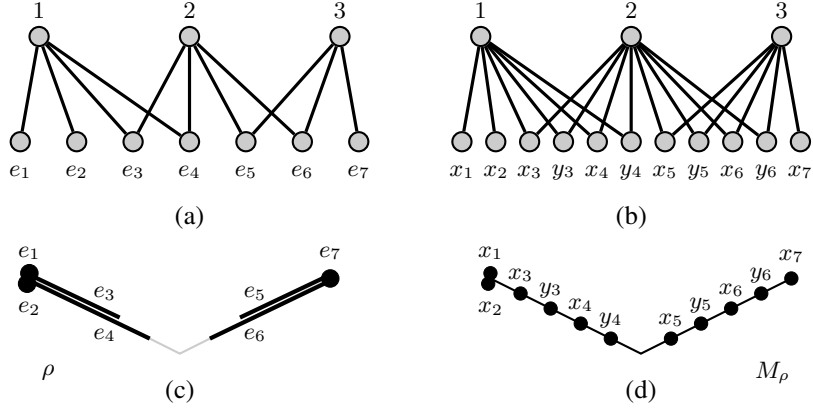

\begin{thm}
  A polymatroid $\rho$ on $E$ is Boolean if and only if there is a
  transversal matroid $M$ and map $\phi:E\to\mathcal{Z}_M$ with
  $\rho(X) = r_M(\bigcup_{i\in X}\phi(i))$ for all $X\subseteq E$.
\end{thm}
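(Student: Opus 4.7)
The plan is to combine Helgason's reformulation of Boolean polymatroids from Example \ref{bpmexample} --- namely, that $\rho$ on $E$ is Boolean if and only if there exist $k$ and a map $N:E\to 2^{[k]}$ with $\rho(X)=\bigl|\bigcup_{i\in X}N(i)\bigr|$ for all $X\subseteq E$ --- with the rank- and join-preserving correspondence (recalled in the paragraph immediately before the statement) between lattices of unions of subsets of $[k]$ and sublattices of cyclic flats of transversal matroids.

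A common technical step for both directions is the observation that, in any matroid $M$ and for any $\phi:E\to\mathcal{Z}_M$, the set $\bigcup_{i\in X}\phi(i)$ is a union of cyclic sets and hence cyclic, so its closure in $M$ is the join $\bigvee_{i\in X}\phi(i)$ in $\mathcal{Z}_M$; since closure preserves rank, $r_M\bigl(\bigcup_{i\in X}\phi(i)\bigr)=r_M\bigl(\bigvee_{i\in X}\phi(i)\bigr)$.

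For the forward direction, I start with a Boolean $\rho$ and obtain $N:E\to 2^{[k]}$ from Helgason's reformulation; applying the correspondence to $\{N(i):i\in E\}$ yields a transversal matroid $M$ and cyclic flats $\phi(i)\in\mathcal{Z}_M$ with $|N(i)|=r_M(\phi(i))$ and with unions on the subset side matched to joins on the cyclic-flat side. The common step then gives $r_M\bigl(\bigcup_{i\in X}\phi(i)\bigr)=r_M\bigl(\bigvee_{i\in X}\phi(i)\bigr)=\bigl|\bigcup_{i\in X}N(i)\bigr|=\rho(X)$. For the converse, given transversal $M$ and $\phi:E\to\mathcal{Z}_M$ with $\rho(X)=r_M\bigl(\bigcup_{i\in X}\phi(i)\bigr)$, I run the correspondence in reverse: a transversal presentation of $M$ realizes (the relevant sublattice of) $\mathcal{Z}_M$ as a lattice of unions of subsets of some $[k]$, so each $\phi(i)$ is the image of some $N(i)\subseteq[k]$ with $|N(i)|=r_M(\phi(i))$. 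The common step again yields $\rho(X)=\bigl|\bigcup_{i\in X}N(i)\bigr|$, and Helgason's reformulation makes $\rho$ Boolean.

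The main obstacle is the converse direction, which relies on the quoted correspondence being genuinely two-sided: every transversal matroid's lattice of cyclic flats must arise from some collection of subsets of $[k]$, with rank matching size. Pinning down this inverse and reading off the right $N(i)$ from a fixed transversal presentation of $M$ is where the work sits; the forward direction, by contrast, is nearly a direct application of the stated correspondence.
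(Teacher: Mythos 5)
Your proposal is correct and follows essentially the same route as the paper, which offers no explicit proof beyond the two sentences preceding the theorem (Helgason's reformulation of Boolean polymatroids via a map $N:E\to 2^{[k]}$, combined with the size-preserving isomorphism between lattices of unions of subsets of $[k]$ and lattices of cyclic flats of transversal matroids); your observation that $\bigcup_{i\in X}\phi(i)$ is cyclic, so its rank equals that of the join $\bigvee_{i\in X}\phi(i)$ in $\mathcal{Z}_M$, is the right way to make that sketch precise. The only point left implicit on both sides is the inverse direction of the correspondence, which follows from the standard fact that in a maximal presentation of a transversal matroid the rank of each cyclic flat (hence of each cyclic set, via closure) equals the number of presentation sets meeting it.
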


Most transversal matroids, such as $U_{2,3}$, are not Boolean
polymatroids, so the codomain of the map $\phi:E\to\mathcal{Z}_M$
cannot be extended to the lattice of flats of $M$.

\section{Bases of an integer polymatroid and its natural
  matroid}\label{sec:bases}

Independent vectors and bases of integer polymatroids are discussed,
for instance, by Herzog and Hibi in \cite{HerzogHibi}.  In this
section, where we focus solely on integer polymatroids, we show how
relating the bases of an integer polymatroid to the bases of its
natural matroid makes transparent some characterizations of integer
polymatroids that use bases.

A basis $B$ of a matroid $M$ on $E=[n]$ is a subset of $[n]$ and so
can be represented by its characteristic vector $\mathbf{b}$, the
$n$-tuple of $0$s and $1$s in which entry $i$, denoted $b_i$, is $1$
if and only if $i\in B$.  No basis contains a loop, so $b_i\leq r(i)$.
Let $\mathbf{e}_i$ be the characteristic vector of the singleton
$\{i\}$.  For the characteristic vector $\mathbf{b}$ of a basis $B$
and a basis $B'=(B-i)\cup j$ obtained by an exchange, the
characteristic vector of $B'$ is
$\mathbf{b}-\mathbf{e}_i+\mathbf{e}_j$.

The \emph{norm} of $\mathbf{v} \in \mathbb{N}^n$ is
$|\mathbf{v}|=v_1+v_2+\cdots+v_n$.  For $\mathbf{u}$ and $\mathbf{v}$
in $\mathbb{N}^n$, we write $\mathbf{u}\leq \mathbf{v}$ if
$u_i\leq v_i$ for all $i\in[n]$; also, $\mathbf{u}< \mathbf{v}$ if
$\mathbf{u}\leq \mathbf{v}$ and $\mathbf{u}\ne \mathbf{v}$.  With this
order, $\mathbb{N}^n$ is a lattice; meet and join are given by
component-wise min and max, respectively.

A definition of an integer polymatroid that is equivalent to the
definition in Section \ref{sec:intro} is that an integer polymatroid
$P$ is a nonempty finite subset $\mathbf{I}$ of $\mathbb{N}^n$, for
some $n$, for which
\begin{itemize}
\item[(I1)] if $\mathbf{v}\in \mathbf{I}$ and
  $\mathbf{u}\in\mathbb{N}^n$ with $\mathbf{u} \leq \mathbf{v}$, then
  $\mathbf{u}\in \mathbf{I}$, and
\item[(I2)] if $\mathbf{u},\mathbf{v}\in \mathbf{I}$ with
  $|\mathbf{u}|<|\mathbf{v}|$, then there is a $\mathbf{w}$ in
  $\mathbf{I}$ with
  $\mathbf{u}<\mathbf{w}\leq \mathbf{u}\vee \mathbf{v}$.
\end{itemize}
(To extend this to all polymatroids, replace $\mathbb{N}$ by
$\mathbb{R}_{\geq 0} = \{x\in\mathbb{R}\,:\,x\geq 0\}$ and require
$\mathbf{I}$ to be compact, rather than finite.)  The vectors in
$\mathbf{I}$ are the \emph{independent vectors} of $P$.  A
\emph{basis} of $P$ is a vector $\mathbf{v}\in \mathbf{I}$ for which
there is no $\mathbf{u}\in \mathbf{I}$ with $\mathbf{v}<\mathbf{u}$.
Property (I2) gives $|\mathbf{v}|=|\mathbf{u}|$ for all bases
$\mathbf{v}$ and $\mathbf{u}$ of $P$.

We now relate this notion to the definition given in Section
\ref{sec:intro}.  Let $E=[n]$.  For $\mathbf{v}\in\mathbb{N}^n$ and
$X\subseteq E$, let $|\mathbf{v}|_X$ be $\sum_{i\in X}v_i$, the sum of
the entries in $\mathbf{v}$ that are indexed by the elements in $X$.
The rank function $\rho:2^E\to \mathbb{N}$ of an integer polymatroid
$P$ on $E$ whose set of independent vectors is $\mathbf{I}$ and whose
set of bases is $\mathbf{B}$ is given by
\begin{equation}\label{eqn:Ptorank}
  \rho(X) = \max\{|\mathbf{u}|_X\,:\, \mathbf{u}\in \mathbf{I} \}
  = \max\{|\mathbf{u}|_X\,:\, \mathbf{u}\in \mathbf{B}\}
\end{equation}
for $X\subseteq E$.  The function $\rho$ satisfies properties (1)--(3)
in Section \ref{sec:intro} and so is the rank function of an integer
polymatroid.  Conversely, given an integer polymatroid
$\rho:2^E\to\mathbb{N}$, the set
\begin{equation}\label{eqn:ranktoP}
  \mathbf{I} = \{\mathbf{u}\in\mathbb{N}^n\,:\, |\mathbf{u}|_X\leq \rho(X)
  \text{ for all } X\subseteq E\}
\end{equation}
satisfies properties (I1) and (I2).  (For a proof, see \cite[p.\ 340,
Lemma 5]{Welsh}.)  Also, the maps $\mathbf{I}\mapsto \rho$ and
$\rho\mapsto \mathbf{I}$ are inverses of each other.  (See
\cite[Corollaries 44.3f and 44.3g]{sch}.)

Given an integer polymatroid $\rho$ on $E=[n]$, let $E'$, $X_i$, and
$X_A$, for $i\in E$ and $A\subseteq E$, and the natural matroid
$M_\rho$ be defined as above.  The \emph{type vector} of a subset $V$
of $E'$ is the vector $\mathbf{v}\in \mathbb{N}^n$ with
$v_i=|V\cap X_i|$ for all $i\in E$.  We use $\mathbf{T}(V)$ to denote
the type vector of $V$.  By Corollary \ref{cor:indepinnatmtd} and
Equation (\ref{eqn:ranktoP}), a subset $V$ of $E'$ is independent in
$M_\rho$ if and only if $\mathbf{T}(V)$ is an independent vector of
$\rho$, and so $V$ is a basis of $M_\rho$ if and only if
$\mathbf{T}(V)$ is a basis of $\rho$.

\begin{example}\label{ex:lppms}
  Consider a Boolean polymatroid
  $\rho=r_{M_1}+ r_{M_2}+\cdots+r_{M_k}$ on $[n]$ where each $M_h$ has
  rank $1$ and its rank-$1$ elements are consecutive integers
  $a_h,a_h+1,\ldots, b_h$, with $a_1\leq a_2\leq \cdots \leq a_k$ and
  $b_1\leq b_2\leq \cdots \leq b_k$.  The matroids
  $M_1, M_2,\ldots,M_k$ correspond to the rows in a lattice path
  diagram, where north steps are labeled by their first coordinate,
  the lower left corner is $(1,0)$, and the upper right corner is
  $(n,k)$.  (See Figure \ref{fig:LPM}.)  Bases correspond to lattice
  paths: entry $u_i$ in a basis $\mathbf{u}$ is the number of north
  steps in the corresponding path that are labeled $i$.  An elementary
  argument (as in the proof of \cite[Theorem 3.3]{lpm1}) shows that
  the correspondence between bases and lattice paths is bijective.
  Schweig \cite{Jay,jay2} introduced these \emph{lattice path
    polymatroids}.  The description of the natural matroid of a
  Boolean polymatroid in Example \ref{bpmexample} along with the ideas
  in \cite[Section 6.1]{lpm2} show that the natural matroid of a
  lattice path polymatroid is a lattice path matroid (see \cite{lpm1}
  for these matroids).  Like the class of lattice path matroids, that
  of lattice path polymatroids is closed under minors; the excluded
  minors for lattice path polymatroids are found in \cite{LPPExMin}.
  Unlike the class of lattice path matroids, that of lattice path
  polymatroids is not closed under duality.  Also, most lattice path
  matroids are not lattice path polymatroids.
\end{example}

\begin{figure}
  \centering
  \begin{tikzpicture}[scale=0.7]
    \draw (1,0) grid (4,1);%
    \draw (3,1) grid (6,2);%
    \draw (5,2) grid (7,3);%
    \draw[line width = 2 pt] (1,0) -- (4,0) -- (4,2) -- (5,2) --
    (6,2)--(6,3) -- (7,3);%
    \draw (0.75,0.5) node {$1$};%
    \draw (1.75,0.5) node {$2$};%
    \draw (2.75,0.5) node {$3$};%
    \draw (3.75,0.5) node {$4$};%
    \draw (2.75,1.5) node {$3$};%
    \draw (3.75,1.5) node {$4$};%
    \draw (4.75,1.5) node {$5$};%
    \draw (5.75,1.5) node {$6$};%
    \draw (4.75,2.5) node {$5$};%
    \draw (5.75,2.5) node {$6$};%
    \draw (6.75,2.5) node {$7$};%
  \end{tikzpicture}
  \caption{A lattice path diagram for the Boolean polymatroid in
    Figure \ref{fig:NatOfBoolean}.  In the notation of Example
    \ref{ex:lppms}, we have $a_1=1$, $a_2=3$, $a_3=5$, $b_1=4$,
    $b_2=6$, and $b_3=7$.  The highlighted path corresponds to the
    basis $(0,0,0,2,0,1,0)$.}
  \label{fig:LPM}
\end{figure}
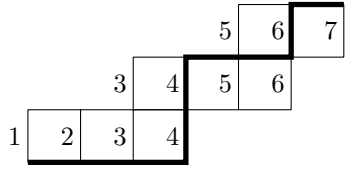

The following characterizations of integer polymatroids by bases are
known (see, e.g., \cite{HerzogHibi}).  We provide a transparent way to
see this and similar results using the natural matroid.

\begin{thm}\label{thm:CharBases}
  A nonempty set $\mathbf{B}\subseteq \mathbb{N}^n$ is the set of
  bases of an integer polymatroid on $E=[n]$ if and only if either of
  the following equivalent conditions holds:
  \begin{itemize}
  \item[(B)] if $\mathbf{u},\mathbf{v}\in \mathbf{B}$ with $u_i>v_i$
    for some $i\in [n]$, then there is a $j\in [n]$ for which
    $u_j<v_j$ and $\mathbf{u}-\mathbf{e}_i +\mathbf{e}_j$ is in
    $\mathbf{B}$,
  \item[(B$'$)] if $\mathbf{u},\mathbf{v}\in \mathbf{B}$ with $u_i>v_i$
    for some $i\in [n]$, then there is a $j\in [n]$ for which
    $u_j<v_j$ and both $\mathbf{u}-\mathbf{e}_i +\mathbf{e}_j$ and
    $\mathbf{v}-\mathbf{e}_j +\mathbf{e}_i$ are in $\mathbf{B}$.
  \end{itemize}
\end{thm}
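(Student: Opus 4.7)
The plan is to reduce axioms (B) and (B$'$) to the standard and symmetric basis-exchange axioms in the natural matroid $M_\rho$, using the bijection between bases of $\rho$ and type vectors of bases of $M_\rho$ noted after Corollary \ref{cor:indepinnatmtd}. Since (B$'$) trivially implies (B), it suffices to prove that every integer polymatroid satisfies (B$'$) and, conversely, that (B) alone forces $\mathbf{B}$ to be the set of bases of some integer polymatroid.

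For the forward direction, given $\mathbf{u},\mathbf{v}\in \mathbf{B}$ with $u_i>v_i$, I would lift to bases $U,V$ of $M_\rho$ with $\mathbf{T}(U)=\mathbf{u}$ and $\mathbf{T}(V)=\mathbf{v}$. Since the elements of each $X_\ell$ are clones in $M_\rho$ (see the proof of Lemma \ref{lem:shownatural}), applying to $U$ a product of permutations of the sets $X_\ell$ yields another basis of $M_\rho$ with type vector $\mathbf{u}$, so I may assume $|U\cap V\cap X_\ell|=\min(u_\ell,v_\ell)$ for every $\ell$. With this nesting, $(U-V)\cap X_\ell$ is nonempty exactly when $u_\ell>v_\ell$, and symmetrically for $V-U$. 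Pick any $a\in (U-V)\cap X_i$; the symmetric basis-exchange for matroids produces $b\in V-U$ such that both $(U-a)\cup b$ and $(V-b)\cup a$ are bases of $M_\rho$. Necessarily $b\in X_j$ for some $j$ with $u_j<v_j$, and taking type vectors of these two bases gives $\mathbf{u}-\mathbf{e}_i+\mathbf{e}_j$ and $\mathbf{v}-\mathbf{e}_j+\mathbf{e}_i$ in $\mathbf{B}$, establishing (B$'$).

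For the converse, assume $\mathbf{B}$ satisfies (B). Set $k_i=\max\{u_i:\mathbf{u}\in\mathbf{B}\}$, choose pairwise disjoint sets $X_i$ with $|X_i|=k_i$, let $E'=\bigcup_i X_i$, and define $\mathcal{B}'=\{V\subseteq E':\mathbf{T}(V)\in\mathbf{B}\}$. First I would show that all members of $\mathbf{B}$ share a common norm: applying (B) to the pair $(\mathbf{v},\mathbf{u})$ at an index $i$ with $v_i>u_i$ rules out $\mathbf{u}<\mathbf{v}$, so distinct members of $\mathbf{B}$ are incomparable; then if $|\mathbf{u}|<|\mathbf{v}|$, repeatedly applying (B) strictly decreases the excess $\sum_\ell\max(u_\ell-v_\ell,0)$ while preserving $|\mathbf{u}|$, eventually producing a member of $\mathbf{B}$ that lies below $\mathbf{v}$, contradicting incomparability. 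Next I would verify matroid basis-exchange in $\mathcal{B}'$: for $U,V\in\mathcal{B}'$ and $a\in (U-V)\cap X_i$, the case $u_i>v_i$ invokes (B) directly, yielding some $j$ with $u_j<v_j$ and $\mathbf{u}-\mathbf{e}_i+\mathbf{e}_j\in \mathbf{B}$, and then picks $b\in (V-U)\cap X_j$; the cases $u_i\le v_i$ are handled by swapping $a$ for some element of $(V-U)\cap X_i$, which preserves the type vector. Thus $\mathcal{B}'$ is the set of bases of a matroid $M$ on $E'$ in which each $X_i$ is a set of clones; setting $\rho(A)=r_M(X_A)$ and invoking Corollary \ref{cor:shownatural} identifies $M$ with $M_\rho$, so the bases of $\rho$ are precisely the type vectors of bases of $M$, namely $\mathbf{B}$.

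The main obstacle is the common-norm step in the converse: neither (B) nor (B$'$) directly asserts that all members of $\mathbf{B}$ have the same size, yet this is needed before one can contemplate a matroid with $\mathcal{B}'$ as its bases. Running the excess-decrement iteration while certifying that each intermediate vector stays inside $\mathbf{B}$ is where the axiom (B) is exploited most delicately; by contrast, the matroid exchange verification and the identification of $M$ with $M_\rho$ via Corollary \ref{cor:shownatural} are essentially bookkeeping once the clone structure is in place.
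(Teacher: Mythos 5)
Your proof is correct and follows essentially the same route as the paper: lift to bases of the natural matroid, use symmetric basis exchange in $M_\rho$ for the forward direction (after normalizing via the clone structure of the sets $X_\ell$), and verify the matroid basis-exchange axiom for $\mathcal{B}'$ before identifying the resulting matroid as $M_\rho$ via Corollary \ref{cor:shownatural}. The one divergence is the common-norm step you flag as the main obstacle; your excess-decrement argument for it is correct but unnecessary, since equicardinality of bases is a standard consequence of the matroid basis axioms (nonemptiness plus exchange) rather than a hypothesis, which is why the paper omits it.
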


\begin{proof}
  First let $\mathbf{B}$ be the set of bases of an integer polymatroid
  $\rho$.  We prove property (B$'$), which implies property (B).  Fix
  $\mathbf{u},\mathbf{v}\in \mathbf{B}$ with $u_i>v_i$ for some
  $i\in [n]$.  Let $U$ and $V$ be bases of the natural matroid
  $M_\rho$ of $\rho$ with $\mathbf{T}(U)=\mathbf{u}$ and
  $\mathbf{T}(V)=\mathbf{v}$.  Since each set $X_t$ is a set of
  clones, we may assume that $V\cap X_t\subseteq U\cap X_t$ whenever
  $v_t\leq u_t$.  Fix $x\in (U-V) \cap X_i$.  By the symmetric basis
  exchange property for matroids, there is an element $y\in V-U$, say
  in $X_j$, so that both $(U-x)\cup y$ and $(V -y)\cup x$ are bases of
  $M_\rho$, so $\mathbf{u}-\mathbf{e}_i+\mathbf{e}_j$ and
  $\mathbf{v} -\mathbf{e}_j+\mathbf{e}_i$ are in $\mathbf{B}$.  Now
  $V\cap X_j\not\subseteq U\cap X_j$, so, as needed, $u_j<v_j$.

  To finish the proof, we show that a nonempty subset $\mathbf{B}$ of
  $\mathbb{N}^n$ that satisfies property (B) is the set of bases of an
  integer polymatroid.  For each $i\in E$, set
  $m_i=\max\{u_i\,:\,\mathbf{u} \in\mathbf{B}\}$ and let $X_i$ be a
  set of $m_i$ elements where $X_i\cap X_j=\emptyset$ if $i\ne j$.
  Set $X_A=\cup_{i\in A}X_i$, for $A\subseteq E$, and $E'=X_E$.  Let
  $\mathcal{B}=\{B\,:\,B\subseteq E' \text{ and } \mathbf{T}(B) \in
  \mathbf{B}\}$. Take $U,V \in \mathcal{B}$ with $x\in U-V$; say
  $x\in X_i$.  If there is an element $y$ in $(V-U)\cap X_i$, then
  $(U-x)\cup y$ has the same type vector as $U$ and so is in
  $\mathcal{B}$.  Now assume that $V\cap X_i\subsetneq U\cap X_i$.
  Let $\mathbf{u}=\mathbf{T}(U)$ and $\mathbf{v}=\mathbf{T}(V)$.
  Thus, $u_i>v_i$.  By property (B), there is a $j\in [n]$ for which
  $u_j<v_j$ and $\mathbf{w}=\mathbf{u}-\mathbf{e}_i +\mathbf{e}_j$ is
  in $\mathbf{B}$.  Thus, $(V-U)\cap X_j\ne \emptyset$.  For any
  $y\in (V-U)\cap X_j$, the set $W=(U-x)\cup y$ has type vector
  $\mathbf{w}$, so $W\in \mathcal{B}$.  Thus, $\mathcal{B}$ is the set
  of bases of a matroid $M$ on $E'$.  Define $\rho:2^E\to \mathbb{N}$
  by $\rho(A)=r_M(X_A)$.  Thus, $\rho$ is an integer polymatroid on
  $E$.  Also, $X_i$ is a set of clones in $M$.  It now follows from
  the definition of $\rho$ and Corollary \ref{cor:shownatural} that
  $M$ is the natural matroid of $\rho$.  From the definition of $M$
  and the comments before Example \ref{ex:lppms}, we have that
  $\mathbf{B}$ is the set of bases of $\rho$, as needed.
\end{proof}

The strategy we used above adapts to prove integer-polymatroid
counterparts of other axiom schemes for matroids that use bases or
independent sets.  We cite just one example, for the middle basis
property.

\begin{thm}
  A nonempty set $\mathbf{B}\subseteq \mathbb{N}^n$ is the set of
  bases of an integer polymatroid on $E=[n]$ if and only if the
  following two conditions hold:
  \begin{itemize}
  \item if $\mathbf{u},\mathbf{v}\in \mathbf{B}$ with
    $\mathbf{u}\ne \mathbf{v}$, then $\mathbf{u}\not\leq \mathbf{v}$
    and $\mathbf{v}\not\leq \mathbf{u}$, and
  \item whenever $\mathbf{x},\mathbf{y}\in \mathbb{N}^n$ with
    $\mathbf{x}\leq \mathbf{y}$ and there are
    $\mathbf{u},\mathbf{v}\in \mathbf{B}$ with
    $\mathbf{x}\leq \mathbf{u}$ and $\mathbf{v}\leq \mathbf{y}$, then
    there is some $\mathbf{w}\in \mathbf{B}$ with
    $\mathbf{x}\leq \mathbf{w}\leq \mathbf{y}$.
  \end{itemize}
\end{thm}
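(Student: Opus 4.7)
The plan is to adapt the strategy from the proof of Theorem \ref{thm:CharBases}: translate both hypotheses and conclusion through the natural matroid, and invoke the classical middle-basis characterization of matroids, namely, that a nonempty antichain $\mathcal{B}$ of subsets of a finite set is the set of bases of a matroid if and only if, whenever $X\subseteq Y$ and some member of $\mathcal{B}$ contains $X$ while some member of $\mathcal{B}$ is contained in $Y$, there exists $W\in\mathcal{B}$ with $X\subseteq W\subseteq Y$.

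For the forward direction, let $\mathbf{B}$ be the set of bases of an integer polymatroid $\rho$ on $[n]$. Since all members of $\mathbf{B}$ share a common norm, the first condition follows at once. For the second, given $\mathbf{x}\le\mathbf{y}$ and $\mathbf{u},\mathbf{v}\in\mathbf{B}$ with $\mathbf{x}\le\mathbf{u}$ and $\mathbf{v}\le\mathbf{y}$, first replace each $y_i$ by $\min\{y_i,\rho(i)\}$, which preserves both the hypotheses and the sought conclusion since $w_i\le\rho(i)$ for any basis vector; then realise $\mathbf{x}$ and $\mathbf{y}$ as nested subsets $X\subseteq Y\subseteq E'$ with $\mathbf{T}(X)=\mathbf{x}$ and $\mathbf{T}(Y)=\mathbf{y}$. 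Using that each $X_i$ is a set of clones of $M_\rho$, permute within each $X_i$ to shift arbitrary bases of types $\mathbf{u}$ and $\mathbf{v}$ to bases $U^*$ and $V^*$ of the same types with $X\subseteq U^*$ and $V^*\subseteq Y$ (possible since $x_i\le u_i$ and $v_i\le y_i$ in each clone set). The matroid middle-basis property applied inside $M_\rho$ then yields a basis $W^*$ with $X\subseteq W^*\subseteq Y$, and $\mathbf{w}=\mathbf{T}(W^*)$ is the required vector.

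For the converse, mimic the construction in the proof of Theorem \ref{thm:CharBases}: set $m_i=\max\{u_i:\mathbf{u}\in\mathbf{B}\}$, take pairwise disjoint sets $X_i$ with $|X_i|=m_i$, put $E'=\bigcup_{i\in[n]}X_i$, and define $\mathcal{B}=\{B\subseteq E':\mathbf{T}(B)\in\mathbf{B}\}$. The first hypothesis on $\mathbf{B}$ immediately forces $\mathcal{B}$ to be an antichain. To verify the matroid middle-basis property for $\mathcal{B}$, take $X\subseteq Y\subseteq E'$ and $U,V\in\mathcal{B}$ with $X\subseteq U$ and $V\subseteq Y$, pass to type vectors, and apply the second hypothesis on $\mathbf{B}$ to obtain $\mathbf{w}\in\mathbf{B}$ with $\mathbf{T}(X)\le\mathbf{w}\le\mathbf{T}(Y)$; then, for each $i$, choose $W\cap X_i$ to be any $w_i$-subset of $Y\cap X_i$ that contains $X\cap X_i$. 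Thus $\mathcal{B}$ is the set of bases of a matroid $M$ on $E'$, each $X_i$ is a set of clones of $M$ because permuting within it preserves $\mathcal{B}$, and Corollary \ref{cor:shownatural} identifies $M$ with the natural matroid of the integer polymatroid $\rho$ given by $\rho(A)=r_M(X_A)$. As in the paragraph preceding Example \ref{ex:lppms}, the bases of $\rho$ are exactly the type vectors of the bases of $M$, so they form the set $\mathbf{B}$.

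The step I expect to demand the most care is the lifting in the converse: realising the abstract vector $\mathbf{w}$ as a concrete set $W\subseteq E'$ sandwiched between $X$ and $Y$ relies on the cardinality window $x_i\le w_i\le y_i$ combined with the nesting $X\cap X_i\subseteq Y\cap X_i$ inside each clone set. The parallel subtlety in the forward direction---using clone automorphisms of $M_\rho$ to shift arbitrary bases of prescribed type to ones positioned correctly with respect to $X$ and $Y$---is routine once one exploits the clone structure of the natural matroid.
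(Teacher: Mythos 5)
Your proof is correct and is precisely the adaptation the paper has in mind: the paper gives no written proof of this theorem, saying only that the strategy of Theorem \ref{thm:CharBases} adapts, and your argument carries out that adaptation (translating through the natural matroid via type vectors, using the clone structure of the sets $X_i$ to position representatives, and invoking the matroid middle-basis characterization in place of symmetric exchange). The two points you flag as delicate---truncating $\mathbf{y}$ by $\rho(i)$ so that $X\subseteq Y\subseteq E'$ can be realized, and lifting $\mathbf{w}$ to a set $W$ with $X\cap X_i\subseteq W\cap X_i\subseteq Y\cap X_i$---are handled correctly.
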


For a positive integer $k$, certain properties of the $k$-dual
$\rho^*$ of an integer $k$-polymatroid $\rho$ highlight how natural
$k$-duality is.  For instance, generalizing a result of Kung
\cite{JPSK} for matroids, Whittle \cite{GeoffDual} showed that the map
$\rho\mapsto \rho^*$ is the only involution on the class of integer
$k$-polymatroids that switches deletion and contraction, i.e.,
$(\rho_{\del i})^*=(\rho^*)_{/i}$ and
$(\rho_{/i})^*=(\rho^*)_{\del i}$ for all $i\in E$.  The set of bases
of the dual of a matroid $M$ on $E$ is given by
$\{E-B\,:\,B\in \mathcal{B}\}$ where $\mathcal{B}$ is the set of bases
of $M$; the next result generalizes this to the $k$-dual of an integer
$k$-polymatroid.

\begin{thm}\label{thm:dualbases}
  Let $k$ be a positive integer and let $\rho$ be an integer
  $k$-polymatroid on $E=[n]$, with $\mathbf{B}$ its set of bases.  The
  set $\mathbf{B}^*$ of bases of the $k$-dual $\rho^*$ is
  $\{\mathbf{u}^*\,:\,\mathbf{u}\in\mathbf{B}\}$ where
  $\mathbf{u}^*=(k,k,\ldots,k)-\mathbf{u}$.
\end{thm}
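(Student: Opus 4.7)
The plan is to verify directly, using the characterization of independent vectors via the rank inequality $|\mathbf{u}|_X \leq \rho(X)$, that the map $\mathbf{u}\mapsto \mathbf{u}^*$ sends bases of $\rho$ into bases of $\rho^*$; the reverse inclusion then follows by applying the same argument to $\rho^*$ together with the involution property $\rho^{**} = \rho$.

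First I would record two preliminary observations. (i) The $k$-dual is an involution: from $\rho^*(X) = k|X| - \rho(E) + \rho(E-X)$ a short calculation gives $\rho^{**}(X) = \rho(X)$. (ii) Each entry of $\mathbf{u}\in \mathbf{B}$ satisfies $u_i \leq \rho(\{i\})\leq k$ since $\rho$ is a $k$-polymatroid, so $\mathbf{u}^* = (k,\ldots,k) - \mathbf{u} \in \mathbb{N}^n$.

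Next I would verify that $\mathbf{u}^*$ is an independent vector of $\rho^*$, i.e., that $|\mathbf{u}^*|_X \leq \rho^*(X)$ for every $X\subseteq E$. Since $|\mathbf{u}^*|_X = k|X| - |\mathbf{u}|_X$ and $\rho^*(X) = k|X| - \rho(E) + \rho(E-X)$, the required inequality rearranges to $\rho(E) - |\mathbf{u}|_X \leq \rho(E-X)$. Because $\mathbf{u}$ is a basis of $\rho$, we have $|\mathbf{u}| = \rho(E)$, so the left side equals $|\mathbf{u}|_{E-X}$, and the inequality $|\mathbf{u}|_{E-X} \leq \rho(E-X)$ is immediate from the fact that $\mathbf{u}$ is independent in $\rho$ (via Equation (\ref{eqn:ranktoP})). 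Moreover $|\mathbf{u}^*| = kn - \rho(E) = \rho^*(E)$, so $\mathbf{u}^*$ has maximum possible norm among independent vectors of $\rho^*$ and is therefore a basis of $\rho^*$.

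For the reverse containment, apply the same argument with the roles of $\rho$ and $\rho^*$ swapped: if $\mathbf{w}\in\mathbf{B}^*$, then $(k,\ldots,k)-\mathbf{w}$ is a basis of $\rho^{**} = \rho$, and this preimage maps back to $\mathbf{w}$ under $\mathbf{u}\mapsto\mathbf{u}^*$. Hence $\{\mathbf{u}^* : \mathbf{u}\in\mathbf{B}\} = \mathbf{B}^*$. There is no real obstacle here; the proof is essentially a one-line manipulation of the defining rank formula combined with the basic fact $|\mathbf{u}| = \rho(E)$ for every basis $\mathbf{u}$ of $\rho$, so the only thing to be careful about is checking that $\mathbf{u}^*$ has nonnegative integer entries, which is precisely where the $k$-polymatroid hypothesis is used.
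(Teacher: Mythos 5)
Your proof is correct and follows essentially the same route as the paper: both rest on the observation that, for a vector of full norm, the inequality $|\mathbf{u}^*|_A\leq\rho^*(A)$ rearranges to $|\mathbf{u}|_{E-A}\leq\rho(E-A)$. The only cosmetic difference is that the paper runs this as a two-way chain of equivalences, while you prove one inclusion and obtain the other from the involution $\rho^{**}=\rho$.
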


\begin{proof}
  Note that $|\mathbf{u}|=\rho(E)$ if and only if
  $|\mathbf{u}^*|=k|E|-\rho(E)=\rho^*(E)$.  With this, the equivalence
  of the following statements shows that $\mathbf{u} \in\mathbf{B}$ if
  and only if $\mathbf{u}^* \in\mathbf{B}^*$:
  \begin{itemize}
  \item $|\mathbf{u}|_A\leq\rho(A)$ for all $A\subseteq E$,
  \item $|\mathbf{u}|_{E-A}\leq \rho(E-A)$ for all $A\subseteq E$,
  \item $\rho(E)-|\mathbf{u}|_A\leq \rho(E-A)$ for all $A\subseteq E$,
  \item $k|A|-|\mathbf{u}|_A\leq k|A|-\rho(E)+\rho(E-A)$ for all
    $A\subseteq E$,
  \item $|\mathbf{u}^*|_A\leq \rho^*(A)$ for all $A\subseteq E$.\qedhere
  \end{itemize}
\end{proof}

\section{Circuits of an integer polymatroid and its natural
  matroid}\label{sec:circuits}

We next develop a theory of circuits for integer polymatroids that is
analogous to that for bases in Section \ref{sec:bases}.  Just as the
bases of an integer polymatroid $\rho$ are the type vectors of the
bases of the natural matroid $M_\rho$, so the circuits of $\rho$ are
the type vectors of the circuits of $M_\rho$, with one exception:
loops of $\rho$ map to the empty set in $M_\rho$.  This is addressed
by the ambient set $\mathbf{U}$ that we consider below.  Another issue
that we must address so that the circuits determine the integer
polymatroid is that we need the rank of each element $i$ for which
$X_i$ is a set of coloops of $M_\rho$; the set $\mathbf{U}$ also takes
care of this.  (In matroids, such elements have rank one, but in
integer polymatroids, the rank could be any positive integer.) Recall
that $[n]_0$ denotes the set $\{0,1,\ldots,n\}$.

The \emph{circuits} of an integer polymatroid $\rho$ on $E=[n]$ are
the vectors $\mathbf{u}$ in the set
$$\mathbf{U}=[\rho(1)]_0\times [\rho(2)]_0\times\cdots \times
[\rho(n)]_0$$ that are not independent and each vector $\mathbf{w}$
with $\mathbf{w}<\mathbf{u}$ is independent.  Thus, from the set
$\mathbf{C}$ of circuits of $\rho$, the set of independent vectors of
$\rho$ is
\begin{equation}\label{eqn:indepfromcir}
  \mathbf{I} = \{\mathbf{u}\in\mathbf{U}\,:\,
  \text{ there is no } \mathbf{c}\in\mathbf{C} \text{ with }
  \mathbf{c}\leq \mathbf{u}\}.
\end{equation}
By the remarks before Example \ref{ex:lppms}, a vector
$\mathbf{u} \in \mathbf{U}$ is a circuit of $\rho$ if and only if some
(equivalently, every) set $C$ in the natural matroid $M_\rho$ with
$\mathbf{u}=\mathbf{T}(C)$ is a circuit of $M_\rho$.  See Figure
\ref{fig:BadCirCompls} for an example.  The set $\mathbf{C}$ is an
antichain in $\mathbb{N}^n$.  Recall that all antichains in
$\mathbb{N}^n$ are finite.

\begin{figure}
  \centering
\begin{tikzpicture}[scale=1.5]
  \draw[very thick, black](0,0)--(2,0);
  \draw[very thick, black](0,0.4)--(2,0.4);

  \filldraw (1,0) node {} circle  (1.8pt);

  \node at (0.2,-0.2) {$3$};
  \node at (1,-0.25) {$2$};
  \node at (1,0.6) {$1$};
\end{tikzpicture}
\caption{The circuits of this rank-$3$ integer polymatroid are
  $(2,0,2)$, $(2,1,1)$, and $(0,1,2)$.}
  \label{fig:BadCirCompls}
\end{figure}
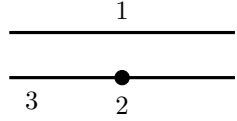

While $\mathbf{U}$ gives the rank of each element, the next lemma
shows how, from $\mathbf{C}$ alone, to get the rank of any element $i$
for which there is a $\mathbf{c}\in\mathbf{C}$ with $c_i>0$.

\begin{lemma}\label{lem:CircuitsCaptureRank}
  Let $\rho$ be an integer polymatroid on $E=[n]$.  For $i\in E$ with
  $\rho(i)>0$, the set $X_i$ is a subset of a circuit of the natural
  matroid $M_\rho$ if and only if $\rho(E)<\rho(E-i)+\rho(i)$.  In
  this case, $\rho(i) = \max\{c_i\,:\, \mathbf{c}\in \mathbf{C}\}$.
\end{lemma}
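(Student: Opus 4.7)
The plan is to prove both directions of the biconditional and then read off the maximum formula from the circuit produced in the $(\Leftarrow)$ direction.

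For the forward direction, I would apply submodularity of $r_{M_\rho}$ to $C$ and $X_{E-i}$, where $C$ is any circuit of $M_\rho$ containing $X_i$. Because $X_i\subseteq C$, their union is all of $X_E$ and so has rank $\rho(E)$; their intersection $C - X_i$ lies inside $C-a$ for any $a\in X_i$ and is therefore independent of size $|C|-\rho(i)$. Combined with $r_{M_\rho}(C)=|C|-1$ and $r_{M_\rho}(X_{E-i})=\rho(E-i)$, the submodular inequality rearranges to $\rho(E)\le \rho(E-i)+\rho(i)-1$, which is the strict inequality wanted.

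For the reverse direction I would construct an explicit circuit of $M_\rho$ containing all of $X_i$. Since $\rho(E)-\rho(i)\le\rho(E-i)$, there is a basis $B^*$ of $M_\rho$ with $X_i\subseteq B^*$; the strict hypothesis then gives $|B^*\cap X_{E-i}|=\rho(E)-\rho(i)<\rho(E-i)=r_{M_\rho}(X_{E-i})$, so independence augmentation inside $X_{E-i}$ produces some $b^*\in X_{E-i}-B^*$ with $(B^*\cap X_{E-i})\cup\{b^*\}$ independent in $M_\rho$. The set $B^*\cup\{b^*\}$ then has $\rho(E)+1$ elements but rank $\rho(E)$ (it contains the basis $B^*$ and lies in $X_E$), so it has corank $1$ and contains a unique circuit $C$. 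The heart of the argument is to show $X_i\subseteq C$ using the fact that the elements of $X_i$ are clones in $M_\rho$: every permutation of $X_i$ extends to an automorphism of $M_\rho$, and since $X_i\subseteq B^*$ and $b^*\in X_{E-i}$ each such permutation fixes $B^*\cup\{b^*\}$ setwise, hence also fixes the unique circuit $C$ setwise. Thus $C\cap X_i$ is invariant under the full symmetric group on $X_i$ and so is either $\emptyset$ or all of $X_i$; the first option would place $C$ inside the independent set $(B^*\cap X_{E-i})\cup\{b^*\}$, which is impossible, so $X_i\subseteq C$.

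The maximum formula follows immediately: any circuit of $M_\rho$ meets $X_i$ in at most $|X_i|=\rho(i)$ elements, so every $\mathbf{c}\in\mathbf{C}$ satisfies $c_i\le\rho(i)$, and the circuit $C$ just constructed attains this bound. I expect the main obstacle to be the clone-symmetry step: upgrading ``$C$ meets $X_i$'' to ``$C\supseteq X_i$'' needs the corank-$1$ uniqueness of the circuit in $B^*\cup\{b^*\}$, which is exactly what forces $C\cap X_i$ to be all-or-nothing under the symmetry.
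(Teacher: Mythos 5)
Your proof is correct, but both halves differ from the paper's argument in ways worth noting. For the direction ``circuit containing $X_i$ implies strict inequality,'' the paper argues by contrapositive: if $\rho(E)=\rho(E-i)+\rho(i)$, then $M_\rho=(M_\rho\del X_i)\oplus(M_\rho|X_i)$, and since $X_i$ is independent it meets no circuit. Your direct submodularity computation on the pair $C$, $X_{E-i}$ (using $r(C)=|C|-1$ and the independence of $C-X_i$) reaches the same conclusion without invoking the direct-sum decomposition, and arguably makes the source of the ``$-1$'' more visible. For the converse, both proofs build a fundamental circuit and then use the clone structure of $X_i$, but the setups differ: the paper extends $X_i-a$ to a basis $B$ of $M_\rho\del a$ (a basis of $M_\rho$ by the hypothesis) and takes the fundamental circuit of $a$, showing $X_i\subseteq C$ by the local swap argument that $(C-a)\cup b$ would otherwise be a circuit inside $B$; you instead extend all of $X_i$ to a basis $B^*$, augment $B^*\cap X_{E-i}$ inside $X_{E-i}$ to find $b^*$, and use the global symmetry argument that the unique circuit of $B^*\cup\{b^*\}$ is invariant under the symmetric group on $X_i$, so $C\cap X_i$ is all or nothing. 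Both are sound; the paper's swap is more economical, while your symmetry argument generalizes cleanly to any corank-one set containing a set of clones. You also explicitly justify the closing claim $\rho(i)=\max\{c_i\,:\,\mathbf{c}\in\mathbf{C}\}$, which the paper leaves implicit.
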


\begin{proof}
  First assume that $\rho(E)<\rho(E-i)+\rho(i)$.  Fix $a\in X_i$.
  Since $X_i$ is independent in $M_\rho$, so is $X_i-a$.  Extend
  $X_i-a$ to a basis $B$ of $M_\rho\del a$, which, by the assumed
  inequality, is a basis of $M_\rho$.  Let $C$ be the fundamental
  circuit of $a$ with respect to the basis $B$ of $M_\rho$.  Then
  $X_i\subseteq C$, for if $b\in X_i-C$, the subset $(C-a)\cup b$ of
  the basis $B$ would be a circuit of $M_\rho$ since $a$ and $b$ are
  clones, but that is a contradiction.

  To prove the contrapositive of the converse, assume that
  $\rho(E)=\rho(E-i)+\rho(i)$.  Thus,
  $r_{M_\rho}(E')=r_{M_\rho}(E'-X_i)+r_{M_\rho}(X_i)$, from which we
  get $M_\rho=(M_{\rho}\del X_i)\oplus(M_{\rho}|X_i)$.  With this
  direct sum decomposition and the fact that $X_i$ is independent in
  $M_\rho$, it follows that $X_i$ is disjoint from all circuits of
  $M_\rho$.
\end{proof}

The following result will be used in the next section.

\begin{lemma}\label{cyclicgivescircuit}
  Let $\rho$ be an integer polymatroid on $E=[n]$.  For $A\subseteq E$
  and $i\in A$, we have $\rho(A)<\rho(A-i)+\rho(i)$ if and only if
  there is a circuit $\mathbf{u}\in\mathbf{C}$ with $u_i>0$ and
  $u_j=0$ for all $j\in E-A$.
\end{lemma}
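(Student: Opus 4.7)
The plan is to transfer both sides to the natural matroid $M_\rho$ and use two facts established earlier in the paper: that $r_{M_\rho}(X_B)=\rho(B)$ for every $B\subseteq E$, and that a vector $\mathbf{u}\in\mathbf{U}$ is a circuit of $\rho$ if and only if some (equivalently, every) subset $C$ of $E'$ with $\mathbf{T}(C)=\mathbf{u}$ is a circuit of $M_\rho$. Under this correspondence, a circuit $\mathbf{u}\in\mathbf{C}$ with $u_i>0$ and $u_j=0$ for all $j\in E-A$ corresponds exactly to a circuit $C$ of $M_\rho$ with $C\subseteq X_A$ and $C\cap X_i\neq\emptyset$. Likewise, the rank equation $\rho(A)=\rho(A-i)+\rho(i)$ becomes $r_{M_\rho}(X_A)=r_{M_\rho}(X_{A-i})+r_{M_\rho}(X_i)$, which is the statement that $M_\rho|X_A=(M_\rho|X_{A-i})\oplus(M_\rho|X_i)$.

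For the forward direction, I would assume $\rho(A)<\rho(A-i)+\rho(i)$ and construct the desired circuit directly. Choose a basis $B_1$ of $M_\rho|X_{A-i}$ and a basis $B_2$ of $M_\rho|X_i$; since $X_i$ is independent in $M_\rho$, $B_2=X_i$. These sets are disjoint and $|B_1\cup B_2|=\rho(A-i)+\rho(i)>\rho(A)=r_{M_\rho}(X_A)$, so $B_1\cup B_2$ is dependent in $M_\rho$ and contains a circuit $C\subseteq X_A$. Since $B_1$ is independent, $C\not\subseteq B_1$, which forces $C\cap X_i\neq\emptyset$. The type vector $\mathbf{T}(C)$ then satisfies the required conditions.

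For the converse, suppose $\mathbf{u}\in\mathbf{C}$ has $u_i>0$ and $u_j=0$ for all $j\in E-A$, and let $C$ be a circuit of $M_\rho$ with $\mathbf{T}(C)=\mathbf{u}$. Then $C\subseteq X_A$ and $C\cap X_i\neq\emptyset$. Because $X_i$ is independent in $M_\rho$, $C$ is not contained in $X_i$, so $C$ meets $X_{A-i}$ as well. Hence $C$ is a circuit of $M_\rho|X_A$ not contained in either direct summand, so $M_\rho|X_A$ is \emph{not} the direct sum $(M_\rho|X_{A-i})\oplus(M_\rho|X_i)$. Therefore $r_{M_\rho}(X_A)<r_{M_\rho}(X_{A-i})+r_{M_\rho}(X_i)$, i.e., $\rho(A)<\rho(A-i)+\rho(i)$.

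The argument is essentially a dictionary translation, so there is no substantive obstacle; the only point to watch is to use that each $X_i$ is independent in $M_\rho$, which is what guarantees that any circuit meeting $X_i$ must also meet $X_{A-i}$, letting one pass freely between ``non-direct-sum decomposition'' and ``a circuit of the required shape.''
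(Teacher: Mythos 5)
Your proof is correct. It shares the paper's overall strategy of translating both sides of the equivalence into statements about the natural matroid $M_\rho$, but the details differ in a way worth noting. For the direction ``inequality implies circuit,'' the paper simply applies Lemma \ref{lem:CircuitsCaptureRank} to the deletion $\rho_{\del E-A}$ (whose natural matroid is $M_\rho|X_A$); that route produces the stronger conclusion that \emph{all} of $X_i$ lies in a single circuit inside $X_A$, at the cost of the clone argument buried in that lemma's proof. You instead argue directly: the union of a basis of $M_\rho|X_{A-i}$ with the independent set $X_i$ has more than $r_{M_\rho}(X_A)$ elements, hence contains a circuit, which must meet $X_i$. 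This is more elementary and self-contained, and it delivers exactly (and only) what the lemma asks for, namely $u_i>0$. For the converse, the paper observes that each $a\in X_i$ lies in a circuit contained in $X_A$ and so is not a coloop of $M_\rho|X_A$, while you phrase the same obstruction as the failure of the direct-sum decomposition $M_\rho|X_A=(M_\rho|X_{A-i})\oplus(M_\rho|X_i)$; these are equivalent formulations of the same fact, both hinging on the independence of $X_i$ to force the circuit to meet $X_{A-i}$.
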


\begin{proof}
  Given $\mathbf{u}\in\mathbf{C}$ with $u_i>0$ and $u_j=0$ for all
  $j\in E-A$, for each $a\in X_i$ there is a circuit $C$ of $M_\rho$
  with $\mathbf{u}=\mathbf{T}(C)$ and $a\in C$.  Thus, $a$ is not a
  coloop of $M|X_A$, so $\rho(A)<\rho(A-i)+\rho(i)$.  The converse
  follows by applying Lemma \ref{lem:CircuitsCaptureRank} to
  $\rho_{\del E-A}$.
\end{proof}

Lemma \ref{lem:CircuitsCaptureRank} and the remarks before it lead to
the following setting for characterizations of integer polymatroids
via circuits, as in Theorem \ref{thm:circchar}: the set of circuits is
a subset of a set of the form
$\mathbf{U}=[m_1]_0\times [m_2]_0\times\cdots \times [m_n]_0$ where
each $m_i$ is a nonnegative integer.  Recording $m_i$ is the only way
we get $\rho(i)$ when all elements of $X_i$ (if there are any) are
coloops of $M_\rho$.  Two circuits in the natural matroid may have
different type vectors or the same type vector; therefore there are
two circuit elimination properties, (C3) and (C4).

\begin{thm}\label{thm:circchar}
  Let $m_1,m_2,\ldots,m_n$ be nonnegative integers and let
  $\mathbf{C}$ be a subset of
  $[m_1]_0\times [m_2]_0\times \cdots \times [m_n]_0$ where, for each
  $i\in [n]$, if $u_i>0$ for some $\mathbf{u}\in\mathbf{C}$, then
  $m_i= \max\{u_i\,:\, \mathbf{u}\in \mathbf{C}\}$.  The set
  $\mathbf{C}$ is the set of circuits of an integer polymatroid on
  $E=[n]$ if and only if $\mathbf{C}$ satisfies properties
  \emph{(C1)--(C4):}
  \begin{itemize}
  \item[(C1)] each vector in $\mathbf{C}$ has at least two positive
    entries,
  \item[(C2)] if $\mathbf{u},\mathbf{v}\in \mathbf{C}$ with
    $\mathbf{u}\ne \mathbf{v}$, then $\mathbf{u}\not< \mathbf{v}$ and
    $\mathbf{v}\not< \mathbf{u}$,
  \item[(C3)] if $\mathbf{u},\mathbf{v}\in \mathbf{C}$ with
    $\mathbf{u}\ne \mathbf{v}$ and if $u_i,v_i>0$, then there is a
    $\mathbf{z}\in\mathbf{C}$ so that
    $\mathbf{z}<\mathbf{u}\vee \mathbf{v}$ and
    $z_i< \max(u_i,v_i)$, and
  \item[(C4)] if $\mathbf{u}\in \mathbf{C}$, and $i\in E$ with
    $0<u_i<m_i$, and $j\in E-i$ with $0<u_j$, then there is a
    $\mathbf{v}\in \mathbf{C}$ with $v_i=u_i+1$, with $v_h\leq u_h$
    for all $h\ne i$, and with $v_j<u_j$.
  \end{itemize}
  Thus, an integer polymatroid on $[n]$ is a pair
  $(\mathbf{U},\mathbf{C})$ where
  \begin{itemize}
  \item[(i)]
    $\mathbf{U} = [m_1]_0\times [m_2]_0\times \cdots \times [m_n]_0$,
    for some $m_1,m_2,\ldots,m_n$ in $\mathbb{N}$,
  \item[(ii)] $\mathbf{C} \subseteq \mathbf{U}$ and $\mathbf{C}$
    satisfies properties \emph{(C1)--(C4)}, and
  \item[(iii)] if $i\in E$ and $u_i>0$ for some
    $\mathbf{u}\in \mathbf{C}$, then
    $m_i= \max\{u_i\,:\, \mathbf{u}\in \mathbf{C}\}$.
  \end{itemize}
  If $\rho$ is the rank function of the integer polymatroid given by
  the pair $(\mathbf{U},\mathbf{C})$, then $\rho(i)=m_i$ for all
  $i\in E$.
\end{thm}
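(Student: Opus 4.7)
The plan is to prove both directions using the natural matroid $M_\rho$ and the clone property of each $X_i$. The key bridge is that $\mathbf{u}\in\mathbf{U}$ is a circuit of $\rho$ if and only if every (equivalently, some) set $C\subseteq E'$ with $\mathbf{T}(C)=\mathbf{u}$ is a circuit of $M_\rho$; hence representative circuits can be chosen freely, and swapping elements within any $X_h$ sends circuits to circuits.

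For the forward direction, (C1) follows because each $X_i$ is independent in $M_\rho$, so no circuit sits inside a single $X_i$. For (C2), if $\mathbf{u}<\mathbf{v}$ were both circuit type vectors, choosing representative circuits $C_u\subseteq C_v$ of $M_\rho$ via clones would contradict the minimality of $C_v$. For (C3), I would pick representative circuits $C_u$ and $C_v$ with maximum overlap, so $|C_u\cap C_v\cap X_h|=\min(u_h,v_h)$ for every $h$, then pick $e\in C_u\cap C_v\cap X_i$ and apply strong circuit elimination in $M_\rho$; the resulting $C_3\subseteq(C_u\cup C_v)-e$ has type vector $\le\mathbf{u}\vee\mathbf{v}-\mathbf{e}_i$, yielding the required $\mathbf{z}$. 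For (C4), given a circuit $C$ of type $\mathbf{u}$, $a\in X_i-C$ (exists since $u_i<m_i$), and $y\in C\cap X_j$ (exists since $u_j>0$), the clone property forces $a\in\cl(C)$, so $(C-y)\cup a$ is dependent of cardinality $|C|$ and contains a circuit $C_3$. A brief case analysis, using a clone swap inside $X_i$ to rule out the alternative $|C_3\cap X_i|\le u_i$ (which would place some clone-image of $C_3$ inside the independent set $C-y$), shows $|C_3\cap X_i|=u_i+1$.

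For the backward direction, let $E'=\bigcup_i X_i$ with pairwise disjoint $X_i$ and $|X_i|=m_i$, and define $\mathcal{C}_M=\{C\subseteq E':\mathbf{T}(C)\in\mathbf{C}\}$; the goal is to show $\mathcal{C}_M$ is the set of circuits of a matroid $M$ on $E'$. Nonemptiness and the antichain property follow from (C1) and (C2), using that $C_1\subsetneq C_2$ forces $\mathbf{T}(C_1)<\mathbf{T}(C_2)$. The heart is circuit elimination: given distinct $C_1,C_2\in\mathcal{C}_M$ and $e\in C_1\cap C_2\cap X_i$, produce $C_3\in\mathcal{C}_M$ with $C_3\subseteq(C_1\cup C_2)-e$. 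If $\mathbf{T}(C_1)\neq\mathbf{T}(C_2)$, (C3) delivers $\mathbf{z}\in\mathbf{C}$ with $\mathbf{z}<\mathbf{u}\vee\mathbf{v}$ and $z_i<\max(u_i,v_i)$; since $|(C_1\cup C_2)\cap X_h|\ge\max(u_h,v_h)$ for every $h$, a set of type $\mathbf{z}$ fits inside $(C_1\cup C_2)-e$. If the type vectors agree and $C_1\cap X_i\neq C_2\cap X_i$, the common type $\mathbf{u}$ itself is realizable inside $(C_1\cup C_2)-e$. If the type vectors agree but $C_1\cap X_i=C_2\cap X_i$, then $C_1$ and $C_2$ must differ in some $X_k$ with $k\neq i$, forcing $m_k\ge u_k+1$; applying (C4) to $\mathbf{u}$ with indices $k$ (the ``$i$'' of (C4)) and $i$ (the ``$j$'' of (C4)) yields $\mathbf{v}'\in\mathbf{C}$ with $v'_k=u_k+1$ and $v'_i<u_i$, which fits inside $(C_1\cup C_2)-e$.

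Once the matroid $M$ exists, set $\rho(A):=r_M(X_A)$. Each $X_i$ is a set of clones of $M$ since $\mathcal{C}_M$ depends only on type vectors, and $r_M(X_A)=\rho(A)$ by definition, so Corollary \ref{cor:shownatural} gives $M=M_\rho$. Property (C1) makes each $X_i$ independent, so $\rho(i)=m_i$; and the polymatroid circuits of $\rho$ are the minimal dependent vectors in $\mathbf{U}$, which by (C2) are exactly the elements of $\mathbf{C}$. The principal obstacle is the equal-type-vector subcase of circuit elimination with $C_1\cap X_i=C_2\cap X_i$: neither (C3) (whose hypothesis demands distinct type vectors) nor any straightforward rearrangement yields $C_3$, and (C4) is designed precisely for this, supplying a replacement circuit whose type vector fits inside $(C_1\cup C_2)-e$.
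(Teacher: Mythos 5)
Your proposal is correct and follows essentially the same route as the paper: the bridge via type vectors of circuits of the natural matroid, the clone property of each $X_i$, and the identical three-case circuit-elimination argument in the converse, with (C3) handling distinct type vectors and (C4) the equal-type, equal-intersection case. The only deviation is your forward-direction proof of (C4), where the paper applies circuit elimination to $C$ and a clone-swapped copy $(C-a)\cup b$ while you argue directly that $(C-y)\cup a$ is dependent via closure and rule out $|C_3\cap X_i|\leq u_i$ by a clone swap; both arguments are valid.
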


\begin{proof}
  Let $\mathbf{C}$ be the set of circuits of an integer polymatroid
  $\rho$ on $E$.  By Lemma \ref{lem:CircuitsCaptureRank}, if
  $\mathbf{u}\in\mathbf{C}$ and $u_i>0$, then $m_i=\rho(i)$.  Property
  (C1) holds since each set $X_i$ is independent in $M_\rho$.  If
  $\mathbf{u},\mathbf{v}\in \mathbf{C}$ and $\mathbf{u}< \mathbf{v}$,
  then a circuit $V$ of $M_\rho$ with $\mathbf{v}=\mathbf{T}(V)$ and a
  subset $U$ of $V$ with $\mathbf{u}=\mathbf{T}(U)$ would be
  comparable circuits of $M_\rho$; this contradiction proves property
  (C2).
  
  For property (C3), take $\mathbf{u},\mathbf{v}\in \mathbf{C}$ with
  $\mathbf{u}\ne \mathbf{v}$ and $u_i,v_i>0$.  Let $U$ and $V$ be
  circuits of $M_\rho$ with $\mathbf{u}=\mathbf{T}(U)$ and
  $\mathbf{v}=\mathbf{T}(V)$.  Each set $X_j$ is a set of clones in
  $M_\rho$, so we may assume that if $u_j\leq v_j$, then
  $U\cap X_j\subseteq V\cap X_j$, and if $v_j\leq u_j$, then
  $V\cap X_j\subseteq U\cap X_j$.  Thus,
  $U\cap V\cap X_i\ne \emptyset$.  For any $a\in U\cap V\cap X_i$,
  circuit elimination applied to $U$ and $V$ gives a circuit $C$ of
  $M_\rho$ with $C\subseteq (U\cup V)-a$.  The inclusions that we have
  assumed give $|C\cap X_j|\leq \max(u_j,v_j)$ for all $j\in E$, and
  $|C\cap X_i|<\max(u_i,v_i)$, as needed.
  
  For property (C4), consider $\mathbf{u} \in \mathbf{C}$ and $i\in E$
  with $0< u_i<\rho(i)$.  Let $C$ be a circuit of $M_\rho$ with
  $\mathbf{T}(C)=\mathbf{u}$.  Fix $a\in C\cap X_i$ and $b\in X_i-C$,
  and set $C'=(C-a)\cup b$, which is a circuit of $M_\rho$ since $a$
  and $b$ are clones.  For $j\in E-i$ with $u_j>0$, fix
  $c\in C\cap X_j$.  By circuit elimination, $M_\rho$ has a circuit
  $D$ with $D\subseteq (C\cup C')-c= (C\cup b)-c$.  Property (C2)
  applied to $\mathbf{u}$ and $\mathbf{T}(D)$ forces
  $(C\cup b)\cap X_i\subseteq D$.  Thus, $|D\cap X_i|=u_i+1$ and
  $|D\cap X_h|\leq u_h$ for all $h\ne i$, and the inequality is strict
  for $h=j$, so property (C4) holds.
  
  For the converse, assume that the pair $(\mathbf{U},\mathbf{C})$
  satisfies properties (i)--(iii).  For each $i\in E$, let $X_i$ be a
  set of size $m_i$ with $X_i\cap X_j=\emptyset$ whenever $i\ne j$.
  We use $X_A$ and $E'$ as above.  Let
  $\mathcal{C}=\{ C\,:\, C\subseteq E' \text{ and
  }\mathbf{T}(C)\in\mathbf{C}\}$.  Now $\emptyset\not\in\mathcal{C}$
  by property (C1).  For two sets $C$ and $C'$ in $\mathcal{C}$,
  either (i) $\mathbf{T}(C)=\mathbf{T}(C')$, so for at least one
  $i\in E$, the subsets $C\cap X_i$ and $C'\cap X_i$ of $X_i$ are
  different but have the same size, or (ii)
  $\mathbf{T}(C)\ne\mathbf{T}(C')$, so by property (C2), there are
  $i,j\in E$ with $|C\cap X_i|<|C'\cap X_i|$ and
  $|C'\cap X_j|<|C\cap X_j|$; thus, neither $C$ nor $C'$ contains the
  other.

  We next show that $\mathcal{C}$ satisfies the circuit elimination
  property.  Take two sets $U,V \in \mathcal{C}$ and $a\in U\cap V$;
  say $a\in X_j$.  Let $\mathbf{u}=\mathbf{T}(U)$ and
  $\mathbf{v}=\mathbf{T}(V)$.  If $\mathbf{u}\ne\mathbf{v}$, then by
  property (C3), there is a $\mathbf{z}\in \mathbf{C}$ with
  $\mathbf{z}<\mathbf{u}\vee\mathbf{v}$ and $z_j< \max(u_j,v_j)$.
  Clearly $(U\cup V)-a$ has a subset $C$ with
  $\mathbf{T}(C)=\mathbf{z}$, as needed.  Now assume that
  $\mathbf{u}=\mathbf{v}$.  If $U\cap X_j\ne V\cap X_j$, then there is
  an element $b\in (V-U)\cap X_j$, and the set $C=(U-a)\cup b$ has
  $\mathbf{T}(C)=\mathbf{u}$, so $C\in \mathcal{C}$, as needed.  If
  $U\cap X_j= V\cap X_j$, then $U\cap X_i\ne V\cap X_i$ for some
  $i\in E-j$, and so $0<u_i<m_i$.  By property (C4), since $u_j>0$,
  there is an $\mathbf{x}\in \mathbf{C}$ with $x_i=u_i+1$, with
  $x_h\leq u_h$ for all $h\ne i$, and with $x_j<u_j$.  Clearly
  $(U\cup V)-a$ has a subset $C$ with $\mathbf{T}(C)=\mathbf{x}$, as
  needed.

  Thus, $\mathcal{C}$ is the set of circuits of a matroid $M$ on $E'$.
  As in the proof of Theorem \ref{thm:CharBases}, from $M$, we get an
  integer polymatroid $\rho$ whose natural matroid is $M$.  Since
  $\mathbf{C}$ is the set of circuits of $\rho$, this completes the
  proof.
\end{proof}

The set $\mathbf{C} = \{(4,1),(2,2)\}$ satisfies all properties except
(C4), so property (C4) does not follow from properties (C1)--(C3).

Let $k$ be a positive integer.  Let $\mathbf{H}$ be the set of the
type vectors of the hyperplanes of the natural matroid of an integer
$k$-polymatroid.  Let
$\mathbf{C}^* =\{(k,k,\ldots,k)-\mathbf{u}\,:\,\mathbf{u}\in
\mathbf{H}\}$.  In contrast to Theorem \ref{thm:dualbases},
$\mathbf{C}^*$ might not be the set of circuits of an integer
polymatroid.  For instance, for the integer $2$-polymatroid $\rho$ in
Figure \ref{fig:BadCirCompls}, we have
$$\mathbf{C}^* = \{(2,1,0),(0,2,2),(1,1,2),(1,2,1)\},$$ and properties
(C3) and (C4) fail.

For an integer polymatroid $\rho$ on $E=[n]$ and any $i\in E$, since
$\rho_{/i}(j) = \rho(\{i,j\})-\rho(i)$ for all $j\in E-i$, the
circuits of the contraction $\rho_{/i}$ are contained in the Cartesian
product
$$\mathbf{U}_{/i}=\prod_{j\in E-i}[\rho(\{i,j\})-\rho(i)]_0.$$
Let $\mathbf{C}'$ be the set of circuits of $\rho$ with the $i$th
entry deleted from each vector.  Since the circuits of a contraction
$M/Y$ of a matroid $M$ are the minimal nonempty sets of the form $C-Y$
as $C$ ranges over the circuits of $M$, and the natural matroid
$M_{\rho_{/i}}$ of $\rho_{/i}$ is $M_\rho/X_i|E'_{/i}$ (in the
notation used after Corollary \ref{cor:shownatural}), it follows that
the circuits of $\rho_{/i}$ are the minimal vectors in
$\mathbf{C}'\cap \mathbf{U}_{/i}$ that have at least two positive entries.

We noted in Section \ref{sec:nat} that an integer polymatroid $\rho$
on $E$ with $|E|>1$ is connected if and only if $\rho$ has no loops
and $M_\rho$ is connected.  Thus, an integer polymatroid $\rho$ on
$[n]$ is connected if and only if for each pair of distinct integers
$i,j\in[n]$, there is a circuit $\mathbf{u}$ of $\rho$ with $u_i>0$
and $u_j>0$.

\section{Flats, cyclic sets, and cyclic flats in
  polymatroids}\label{sec:cyclic}

While some results in this section apply only to integer polymatroids,
many apply to all polymatroids.  To describe what we do in this
section, we first need some definitions.  Flats in a polymatroid
$\rho$ on $E$ are defined as in matroids: a subset $A$ of $E$ is a
\emph{flat} of $\rho$ if $ \rho(A \cup i) > \rho(A)$ for all
$i \in E-A$.  Let $\mathcal{F}_\rho$ denote the set of flats of
$\rho$.  Unless we are focusing only on matroids, $\mathcal{F}_\rho$
does not determine $\rho$ since, for instance, $\rho$ and $c\, \rho$,
for any positive real $c$, have the same flats.

There are various equivalent ways to say that a set $X$ in a matroid
$M$ is cyclic, including:
\begin{itemize}
\item[(i)] $X$ is a union of circuits;
\item[(ii)] $M|X$ has no coloops;
\item[(iii)] $r(X) < r(X - y) + r(y)$ for each $y \in X$ that is not a
  loop.
\end{itemize}
As in \cite{cyclicpoly}, we adapt condition (iii) to define cyclic
sets in a polymatroid $\rho$ on $E$: a subset $A$ of $E$ is
\emph{cyclic} if $\rho(A) < \rho(A - i) + \rho(i)$ for all $i \in A$
with $\rho(i) > 0$.  We let $\mathcal{Y}_\rho $ denote the set of all
cyclic sets of $\rho$.

Of greatest interest are the cyclic flats, that is, the flats that are
cyclic.  The set of cyclic flats of $\rho$ is denoted
$\mathcal{Z}_\rho$, or $\mathcal{Z}_M$ for a matroid $M$.  As in the
case of matroids, $\mathcal{Z}_\rho$ is a lattice under
inclusion. (See the comment after Lemma \ref{lem:cycwclprops}.)  The
next result, from \cite{juliethesis, cyc}, characterizes matroids in
terms of their cyclic flats and the ranks of those sets.

\begin{thm}\label{thm:cfaxiom}
  For a pair $(\mathcal{Z}, r)$, where $\mathcal{Z}\subseteq 2^E$ and
  $r : \mathcal{Z} \to \mathbb{N}$, there is a matroid $M$ for which
  $\mathcal{Z} = \mathcal{Z}_M$ and $r(Z)=r_M(Z)$ for all
  $Z\in\mathcal{Z}$ if and only if
  \begin{itemize}
  \item[(Z0)] ordered by inclusion, $\mathcal{Z}$ is a lattice,
  \item[(Z1)] $r(\hat{0}_{\mathcal{Z}}) = 0$, where
    $\hat{0}_{\mathcal{Z}}$ is the least element of $\mathcal{Z}$,
  \item[(Z2)] $0 < r(B) - r(A) < |B - A|$ for all sets $A, B$ in
    $\mathcal{Z}$ with $A \subsetneq B$, and
  \item[(Z3)]
    $r(A \vee B) + r(A \wedge B) + |(A \cap B) - (A \wedge B)| \leq
    r(A) + r(B)$ for all $A, B$ in $\mathcal{Z}$.
  \end{itemize}
\end{thm}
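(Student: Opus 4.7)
The plan is to prove the two directions separately. For necessity, given a matroid $M$ on $E$, I check that $(\mathcal{Z}_M, r_M|_{\mathcal{Z}_M})$ satisfies (Z0)--(Z3). Conditions (Z0) and (Z1) are standard. For (Z2) with $A \subsetneq B$ in $\mathcal{Z}_M$, the lower bound uses that $A$ is a flat, so rank strictly increases upon adding any element of $B - A$; the upper bound uses that $B$ is cyclic, so $M|B$ has no coloops, whence $r_M(B) - r_M(A) < |B - A|$. For (Z3), the key preliminary identity is $r_M(A \cap B) = r_M(A \wedge B) + |(A \cap B) - (A \wedge B)|$, which holds because $A \wedge B$ is the largest cyclic set contained in the flat $A \cap B$, so the elements of $(A \cap B) - (A \wedge B)$ are exactly the coloops of $M|(A \cap B)$; combining this with ordinary submodularity of $r_M$ and the identity $r_M(A \vee B) = r_M(A \cup B)$ yields (Z3).

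For sufficiency, given $(\mathcal{Z}, r)$ satisfying (Z0)--(Z3) on a ground set $E$, I define the candidate rank function
$$\rho(X) = \min\{r(Z) + |X - Z| : Z \in \mathcal{Z}\}, \qquad X \subseteq E,$$
in analogy with Equation (\ref{eq:rankviacyclicflats}). First I verify that $\rho$ is a matroid rank function: normalization follows from (Z1) via $Z = \hat{0}_{\mathcal{Z}}$; the unit-increment bound $\rho(X \cup y) \leq \rho(X) + 1$ and monotonicity are immediate from the definition; and $\rho(\{y\}) \leq 1$ again from $Z = \hat{0}_{\mathcal{Z}}$. Submodularity is the main technical step: given minimizers $A$ for $\rho(X)$ and $B$ for $\rho(Y)$, I bound $\rho(X \cup Y) + \rho(X \cap Y)$ by substituting $A \vee B$ and $A \wedge B$ into the definition and applying (Z3); the residual claim reduces to a per-element inequality balancing contributions from $X - A$, $Y - B$, and $(A \cap B) - (A \wedge B)$ against those from $(X \cup Y) - (A \vee B)$ and $(X \cap Y) - (A \wedge B)$, which I verify by a four-way case split on whether $e$ lies in $A$ and/or $B$.

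Next I show $\rho|_{\mathcal{Z}} = r$. The inequality $\rho(Z) \leq r(Z)$ is immediate. For the reverse, given any $F \in \mathcal{Z}$, applying (Z3) to $F$ and $Z$ together with $r(F \vee Z) \geq r(Z)$ gives $r(F) \geq r(F \wedge Z) + |(F \cap Z) - (F \wedge Z)|$; combining this with the disjoint decomposition $Z - (F \wedge Z) = ((F \cap Z) - (F \wedge Z)) \cup (Z - F)$ and iterating (Z2) from $F \wedge Z$ up to $Z$ delivers $r(F) + |Z - F| \geq r(Z)$. In particular, the minimum in $\rho(Z)$ is achieved \emph{only} at $F = Z$.

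Finally I verify $\mathcal{Z}_\rho = \mathcal{Z}$. For $\mathcal{Z} \subseteq \mathcal{Z}_\rho$: the uniqueness of $F = Z$ as the minimizer in $\rho(Z) = r(Z)$ yields $\rho(Z \cup y) > \rho(Z)$ for each $y \in E - Z$ (the flat property) and $\rho(Z - y) = \rho(Z)$ for each non-loop $y \in Z$ (the cyclic property). For $\mathcal{Z}_\rho \subseteq \mathcal{Z}$: let $W \in \mathcal{Z}_\rho$ and let $F \in \mathcal{Z}$ minimize $r(F) + |W - F|$. If some $y \in F - W$, then $\rho(W \cup y) \leq r(F) + |(W \cup y) - F| = \rho(W)$ contradicts $W$ being a flat, so $F \subseteq W$; if some $y \in W - F$, then $y$ is a non-loop (since $\hat{0}_{\mathcal{Z}} \subseteq F$ contains all loops), and $\rho(W - y) \leq r(F) + |W - F| - 1 = \rho(W) - 1$ contradicts $W$ being cyclic, so $W \subseteq F$; hence $W = F \in \mathcal{Z}$. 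I expect the main obstacle to be the per-element case analysis underpinning submodularity of $\rho$; once that lemma is in place, the remaining steps are essentially bookkeeping driven by (Z2) and (Z3).
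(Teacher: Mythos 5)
Your proof is correct and follows exactly the outline that the paper attributes to Sims \cite{juliethesis} and Bonin--de Mier \cite{cyc} and sketches just after Theorem \ref{thm:pcfaxiom}: define the candidate rank function as $\min\{r(Z)+|X-Z| : Z\in\mathcal{Z}\}$, verify the rank axioms with (Z3) driving submodularity, show the minimum at a set $Z\in\mathcal{Z}$ is attained only at $Z$ itself, and deduce $\mathcal{Z}_\rho=\mathcal{Z}$ from that uniqueness. The paper only cites this result rather than proving it, so there is no in-text argument to diverge from; your necessity step via the identity $r_M(A\cap B)=r_M(A\wedge B)+|(A\cap B)-(A\wedge B)|$ (the elements of $(A\cap B)-(A\wedge B)$ being the coloops of $M|(A\cap B)$) and your per-element counting for submodularity are the standard ones and are sound.
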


Csirmaz \cite{cyclicpoly} extended this theorem.  His result, stated
next, characterizes polymatroids using cyclic flats and the value of
the rank function on each of those flats as well as on each singleton
set.  The rank of each element must be given since, while in a matroid
each element that is not in the least cyclic flat (the set of loops)
has rank $1$, in a polymatroid, such an element may have any positive
rank.

\begin{thm}\label{thm:pcfaxiom}
  For a pair $(\mathcal{Z}, \rho')$, where $\mathcal{Z}\subseteq 2^E$
  and $\rho': \mathcal{Z} \cup E \to \mathbb{R}_{\geq 0}$, there is a
  polymatroid $\rho$ on $E$ with $\mathcal{Z} = \mathcal{Z}_\rho$ and
  $\rho(x)=\rho'(x)$ for all $x\in\mathcal{Z} \cup E$ if and only if
  \begin{enumerate}
  \item[(PZ0)] ordered by inclusion, $\mathcal{Z}$ is a lattice,
  \item[(PZ1)] the least element of $\mathcal{Z}$, denoted
    $\hat{0}_{\mathcal{Z}}$, is $\{i \in E : \rho'(i) = 0\}$, and
    $\rho'(\hat{0}_{\mathcal{Z}}) = 0$,
  \item[(PZ2)] for all sets $A, B$ in $\mathcal{Z}$ with
    $A\subsetneq B$, 
    $$0 < \rho'(B) - \rho'(A) < \sum_{i\in B-A} \rho'(i),$$ 
  \item[(PZ3)] for all sets $A, B$ in $\mathcal{Z}$, 
    $$\rho'(A \vee B) + \rho'(A \wedge  B) + \sum_{i\in (A\cap B)-(A\wedge B)}
    \rho'(i) \leq \rho'(A) + \rho'(B),$$
    and
  \item[(PZ4)] if $A \in \mathcal{Z}$ and $i\in A$, then
    $\rho'(i)\leq \rho'(A)$.
  \end{enumerate}
\end{thm}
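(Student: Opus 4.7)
The plan is to verify the two directions of the equivalence separately. For necessity, suppose $\rho$ is a polymatroid on $E$, take $\mathcal{Z} = \mathcal{Z}_\rho$ and $\rho' = \rho|_{\mathcal{Z} \cup E}$, and check each axiom. Property (PZ0) is stated earlier in the paper. For (PZ1), the set $L = \{i \in E : \rho(i) = 0\}$ of loops is trivially cyclic, is a flat (since $\rho(L \cup j) \geq \rho(j) > 0 = \rho(L)$ for $j \notin L$) with $\rho(L) = 0$, and lies inside every flat. For (PZ2), the lower bound follows from $A$ being a flat; for the upper bound, pick any $i \in B - A$ (which has $\rho(i) > 0$ since loops lie in $A$), use cyclicity of $B$ to get $\rho(B) < \rho(B - i) + \rho(i)$, and combine with the submodular bound $\rho(B - i) \leq \rho(A) + \sum_{j \in (B-i) - A} \rho(j)$. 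For (PZ3), the key identities are $\rho(A \vee B) = \rho(A \cup B)$ (since $A \vee B$ is the cyclic-flat closure of the cyclic set $A \cup B$) and $\rho(A \cap B) = \rho(A \wedge B) + \sum_{i \in (A \cap B) - (A \wedge B)} \rho(i)$ (since $A \wedge B$ is the cyclic part of the flat $A \cap B$, and the remaining elements can be peeled off as coloops of $\rho|(A \cap B)$); applying submodularity $\rho(A \cup B) + \rho(A \cap B) \leq \rho(A) + \rho(B)$ then yields (PZ3). Property (PZ4) is immediate from monotonicity of $\rho$.

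For sufficiency, given $(\mathcal{Z}, \rho')$ satisfying (PZ0)--(PZ4), define
$$\rho(Y) = \min\bigl\{\rho'(Z) + \sum_{i \in Y - Z} \rho'(i) \,:\, Z \in \mathcal{Z}\bigr\}$$
for $Y \subseteq E$ and show that $\rho$ is a polymatroid with $\mathcal{Z}_\rho = \mathcal{Z}$ and $\rho(x) = \rho'(x)$ for all $x \in \mathcal{Z} \cup E$. Normalization follows from (PZ1), monotonicity is straightforward, and $\rho(i) = \rho'(i)$ follows from taking $Z = \hat{0}_\mathcal{Z}$ for the upper bound and using (PZ4) for the lower bound. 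The equality $\rho(A) = \rho'(A)$ for $A \in \mathcal{Z}$ requires comparing a general witness $Z$ with $A \wedge Z$ via (PZ2) and (PZ3), forcing $Z = A$ to be optimal. To identify $\mathcal{Z}_\rho$ with $\mathcal{Z}$, the strict inequalities in (PZ2) show that each $A \in \mathcal{Z}$ is a flat (adding any element strictly raises the rank) and is cyclic (removing a non-loop element strictly lowers the rank by less than $\rho(i)$); conversely, any cyclic flat of $\rho$ must appear as an optimal witness in the minimum formula and hence lie in $\mathcal{Z}$.

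The main obstacle is submodularity of the defined $\rho$. Given $Y_1, Y_2 \subseteq E$ with optimal witnesses $Z_1, Z_2 \in \mathcal{Z}$ for $\rho(Y_1), \rho(Y_2)$, the plan is to use $Z_1 \vee Z_2$ as a candidate for $\rho(Y_1 \cup Y_2)$ and $Z_1 \wedge Z_2$ as a candidate for $\rho(Y_1 \cap Y_2)$, and apply (PZ3); the extra term $\sum_{i \in (Z_1 \cap Z_2) - (Z_1 \wedge Z_2)} \rho'(i)$ in (PZ3) is precisely what reconciles the singleton sums across the meet and join. In the integer-polymatroid case, this construction admits a transparent natural-matroid interpretation: take pairwise disjoint sets $X_i$ with $|X_i| = \rho'(i)$ and set $X_A = \bigcup_{i \in A} X_i$; under the correspondence $A \leftrightarrow X_A$, axioms (PZ0)--(PZ3) translate directly to (Z0)--(Z3) of Theorem \ref{thm:cfaxiom} for the family $\{X_A : A \in \mathcal{Z}\}$, yielding a matroid $M$ on $E' = \bigcup_i X_i$ in which each $X_i$ is a set of clones (lying in the same cyclic flats) and is independent (by the rank formula and (PZ4)), and Corollary \ref{cor:shownatural} then identifies $M$ as the natural matroid of an integer polymatroid with the required data.
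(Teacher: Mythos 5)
Your proposal is correct in outline, but it takes a different route from the paper because the paper does not actually prove Theorem \ref{thm:pcfaxiom}: the theorem is quoted as Csirmaz's result, and the only argument supplied is the remark after Corollary \ref{cor:relatecfs} that the \emph{integer} case follows from Theorem \ref{thm:cfaxiom}, since $A\mapsto X_A$ is a bijection from $\mathcal{Z}_\rho$ onto $\mathcal{Z}_{M_\rho}$ under which (PZ0)--(PZ3) become (Z0)--(Z3), with (PZ4) ensuring each $X_i$ is independent. Your final paragraph is exactly that argument, so there you coincide with the paper; the rest of your proposal is a direct verification for arbitrary real-valued polymatroids, essentially reconstructing Csirmaz's proof (which follows Sims's outline for Theorem \ref{thm:cfaxiom}). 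That buys genuine generality: the natural-matroid route is unavailable when ranks are not integers, whereas the direct route covers all polymatroids at the cost of redoing the lattice-theoretic bookkeeping that Theorem \ref{thm:cfaxiom} otherwise supplies for free. Your necessity checks are right (in particular (PZ3) follows from $\rho(A\vee B)=\rho(\cl_\rho(A\cup B))=\rho(A\cup B)$ via Lemma \ref{lem:clreform} and from $\rho(A\cap B)=\rho(A\wedge B)+\sum_{i\in(A\cap B)-(A\wedge B)}\rho(i)$ via Lemma \ref{lem:cycprops}), and your plan for submodularity is sound: the needed element-counting inequality
$$\sum_{i\in(Z_1\cap Z_2)-(Z_1\wedge Z_2)}\rho'(i)+\sum_{i\in Y_1-Z_1}\rho'(i)+\sum_{i\in Y_2-Z_2}\rho'(i)\;\geq\;\sum_{i\in (Y_1\cup Y_2)-(Z_1\vee Z_2)}\rho'(i)+\sum_{i\in (Y_1\cap Y_2)-(Z_1\wedge Z_2)}\rho'(i)$$
does hold (it is the inequality the paper proves inside part (III) of Theorem \ref{thm:Rstructure}, with $Y_1\cup Y_2$ and $Y_1\cap Y_2$ in place of $A$). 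If you write this up in full, the steps that most need expansion are the ``equality forces $Z=A$'' argument giving $\rho=\rho'$ on $\mathcal{Z}$, and the converse inclusion $\mathcal{Z}_\rho\subseteq\mathcal{Z}$, for which the cleanest phrasing is via Lemma \ref{lem:recastR}: an optimal witness $Z$ for a cyclic flat $F$ must satisfy $F-Z\subseteq\hat{0}_{\mathcal{Z}}$ and $\rho(F\cap Z)=\rho(Z)$, which forces $F=Z$.
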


We will show how, in the case of an integer polymatroid $\rho$,
Theorem \ref{thm:pcfaxiom} follows from Theorem \ref{thm:cfaxiom}; we
do this by relating the flats of $\rho$ to those of its natural
matroid, and likewise for cyclic sets and for cyclic flats.  The proof
of Theorem \ref{thm:pcfaxiom} in \cite{cyclicpoly} has the same
general outline as the proof of Theorem \ref{thm:cfaxiom} that Sims
\cite{juliethesis} gave.  In particular, for the more involved
implication, assuming that the properties above hold for
$(\mathcal{Z}, \rho')$, one defines a function
$\rho:2^E\to\mathbb{R}$, checks that the defining properties of a
polymatroid hold, and shows that its cyclic flats are precisely the
sets in $\mathcal{Z}$, and that $\rho$ and $\rho'$ have the same values
on the sets in $\mathcal{Z}$ and the elements of $E$.  The function
$\rho$ is defined by
$$\rho(A) = \min\{\rho'(X) + \sum_{i\in A-X}\rho'(i) \,:\, X \in
\mathcal{Z}\}.$$ This makes it natural to consider, for a polymatroid
$\rho$ on $E$ and subset $A$ of $E$, the set
\begin{equation}\label{eq:calR}
  \mathcal{R}_\rho(A) = \{B\in\mathcal{Z}_\rho\,:\,\rho(A)=
  \rho(B)+\sum_{i\in A-B}\rho(i)\}.
\end{equation}
For a matroid $M$, we write this set as $\mathcal{R}_M(A)$.  Our main
new result is Theorem \ref{thm:Rstructure}, where we show that
$\mathcal{R}_\rho(A)$ is a sublattice of $\mathcal{Z}_\rho$, we
identify its least and greatest elements, and we show that each pair
of elements in $\mathcal{R}_\rho(A)$ is a modular pair.  To prepare
for that, we develop basic results about flats and cyclic sets, and
two operators related to them.  (While some of these results may be
known, we include proofs for completeness.)

We start with flats.  The flats of an integer polymatroid are related
to those of its natural matroid in the simplest possible way, as the
next lemma states.

\begin{lemma}\label{lem:flatnat}
  For an integer polymatroid $\rho$ on $E$, let $\hat{0}_\rho$ be
  $\{i\in E\,:\,\rho(i)=0\}$.  A subset $A$ of $E$ is a flat of $\rho$
  if and only if $\hat{0}_\rho\subseteq A$ and $X_A$ is a flat of the
  natural matroid $M_\rho$.
\end{lemma}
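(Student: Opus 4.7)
The plan is to exploit two facts established earlier: $r_{M_\rho}(X_A)=\rho(A)$ for every $A\subseteq E$, and each $X_i$ is a set of clones of $M_\rho$. The proof splits into the two implications, and each is short once these are used correctly; the small subtlety is handling the loops of $\rho$.

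For the forward direction, assume $A$ is a flat of $\rho$. To see that $\hat{0}_\rho\subseteq A$, suppose $i\in E-A$ has $\rho(i)=0$; then submodularity and normalization give $\rho(A\cup i)\leq \rho(A)+\rho(i)=\rho(A)$, contradicting the flat condition. Next, to check that $X_A$ is a flat of $M_\rho$, pick $x\in E'-X_A$ and let $i\in E-A$ be the unique element with $x\in X_i$. Because $A$ is a flat, $\rho(A\cup i)>\rho(A)$, hence $r_{M_\rho}(X_{A\cup i})>r_{M_\rho}(X_A)$. Extend a basis $B$ of $M_\rho|X_A$ to a basis $B'$ of $M_\rho|X_{A\cup i}$; then $B'-B$ is nonempty and contained in $X_i$, so pick $y\in (B'-B)\cap X_i$. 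Since $x$ and $y$ are clones, the automorphism swapping them sends the independent set $B\cup y$ to $B\cup x$, which is therefore independent. This gives $r_{M_\rho}(X_A\cup x)\geq |B|+1>r_{M_\rho}(X_A)$, as required.

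For the reverse direction, assume $\hat{0}_\rho\subseteq A$ and that $X_A$ is a flat of $M_\rho$; take $i\in E-A$. Since $i\notin\hat{0}_\rho$ we have $\rho(i)>0$, so $X_i$ is nonempty and we may pick any $x\in X_i\subseteq E'-X_A$. Using that $X_A$ is a flat of $M_\rho$,
\[
\rho(A\cup i)=r_{M_\rho}(X_{A\cup i})\geq r_{M_\rho}(X_A\cup x)>r_{M_\rho}(X_A)=\rho(A),
\]
which is exactly the flat condition at $i$. Varying $i$ over $E-A$ completes the proof.

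The only place that needs any care is the clone transfer step in the forward direction (showing that the added rank at $X_{A\cup i}$ really can be witnessed by the prescribed element $x$ of $X_i$); everything else is a direct translation between $\rho$ and $M_\rho$ using $r_{M_\rho}(X_A)=\rho(A)$.
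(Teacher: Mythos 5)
Your proof is correct. One half is identical to the paper's: given $\hat{0}_\rho\subseteq A$ and $X_A$ a flat of $M_\rho$, you pick $x\in X_i$ for $i\in E-A$ (nonempty since $\rho(i)>0$) and read off $\rho(A\cup i)>\rho(A)$ from $r_{M_\rho}(X_{A\cup i})\geq r_{M_\rho}(X_A\cup x)>r_{M_\rho}(X_A)$. The other half differs in mechanics. The paper argues the contrapositive: if $r_{M_\rho}(X_A\cup b)=r_{M_\rho}(X_A)$ for some $b\in X_i$, then the clone property gives the same equality for every $c\in X_i$, and repeated submodularity collapses this to $r_{M_\rho}(X_{A\cup i})=r_{M_\rho}(X_A)$, so $A$ is not a flat of $\rho$. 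You instead argue directly: from $\rho(A\cup i)>\rho(A)$ you extend a basis $B$ of $M_\rho|X_A$ to a basis $B'$ of $M_\rho|X_{A\cup i}$, note $B'-B$ is a nonempty subset of $X_i$, and use the clone automorphism swapping $y\in B'-B$ with the prescribed $x$ (which fixes $B$ pointwise, as $B\subseteq X_A$ is disjoint from $X_i$) to certify that $B\cup x$ is independent. Both arguments rest on the same two facts, namely $r_{M_\rho}(X_A)=\rho(A)$ and that each $X_i$ is a set of clones; the paper's contrapositive-plus-submodularity route avoids the basis bookkeeping, while your version makes the rank increase explicitly witnessed at the chosen element. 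Your handling of $\hat{0}_\rho\subseteq A$ via $\rho(A\cup i)\leq\rho(A)+\rho(i)$ matches the paper's (implicit) observation that a flat must contain all loops.
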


\begin{proof}
  Assume that $\hat{0}_\rho\subseteq A$ and that $X_A$ is a flat of
  $M_\rho$.  If $i\in E-A$, then there are elements $b\in X_i$, and
  $r_{M_{\rho}}(X_{A\cup i})\geq r_{M_{\rho}}(X_A\cup b) >
  r_{M_{\rho}}(X_A)$, so $\rho(A\cup i)>\rho(A)$, as needed.  We now
  prove the contrapositive of the converse.  If
  $\hat{0}_\rho\not\subseteq A$, then clearly $A$ is not a flat of $\rho$.
  Assume that $X_A$ is not a flat of $M_\rho$, so
  $r_{M_{\rho}}(X_A\cup b) = r_{M_{\rho}}(X_A)$ for some
  $b \in E'-X_A$; say $b\in X_i$.  Then
  $r_{M_{\rho}}(X_A\cup c) = r_{M_{\rho}}(X_A)$ for all $c\in X_i$
  since $X_i$ is a set of clones.  From this, repeatedly applying
  submodularity gives $r_{M_{\rho}}(X_{A\cup i}) = r_{M_{\rho}}(X_A)$,
  so $\rho(A\cup i) = \rho(A)$, so $A$ is not a flat of $\rho$.
\end{proof}

\begin{cor}\label{cor:Flattice}
  The set $\mathcal{F}_\rho$ of flats of an integer polymatroid
  $\rho$, ordered by inclusion, is isomorphic to a sublattice of the
  lattice $\mathcal{F}_{M_\rho}$.  The meet of two flats of $\rho$ is
  their intersection.
\end{cor}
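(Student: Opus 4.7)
The plan is to use the map $\phi\colon \mathcal{F}_\rho \to \mathcal{F}_{M_\rho}$ defined by $\phi(A)=X_A$, which takes values in $\mathcal{F}_{M_\rho}$ by Lemma \ref{lem:flatnat}. First I would check that $\phi$ is injective: every flat $A$ of $\rho$ contains $\hat{0}_\rho$, so each $i\in E-A$ satisfies $\rho(i)>0$ and hence $X_i\ne\emptyset$; since the sets $X_i$ are pairwise disjoint, distinct flats of $\rho$ yield distinct sets $X_A$. Both $\phi$ and its inverse on the image are clearly order-preserving (using the same observation that $X_i\ne\emptyset$ whenever $i\notin \hat{0}_\rho$), so $\phi$ is an order-isomorphism onto its image. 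It therefore suffices to show that $\phi(\mathcal{F}_\rho)$ is a sublattice of $\mathcal{F}_{M_\rho}$; this will automatically make $\mathcal{F}_\rho$ a lattice and $\phi$ a lattice isomorphism onto its image.

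For meets, which in $\mathcal{F}_{M_\rho}$ are intersections, take $A,B\in\mathcal{F}_\rho$. Then $X_A\cap X_B=X_{A\cap B}$ is a flat of $M_\rho$ as an intersection of two matroid flats, and $\hat{0}_\rho\subseteq A\cap B$, so Lemma \ref{lem:flatnat} gives $A\cap B\in\mathcal{F}_\rho$ with $\phi(A\cap B)=X_A\cap X_B$. This shows that $\phi(\mathcal{F}_\rho)$ is closed under intersection, that $\phi$ preserves meets, and the second assertion of the corollary: the meet of $A$ and $B$ in $\mathcal{F}_\rho$ is $A\cap B$.

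The main obstacle is verifying that the join $F:=\cl_{M_\rho}(X_{A\cup B})$ of $X_A$ and $X_B$ in $\mathcal{F}_{M_\rho}$ lies in $\phi(\mathcal{F}_\rho)$. My plan here is to exploit the clone structure: each $X_i$ is a set of clones of $M_\rho$, so every permutation of $X_i$ that fixes $E'-X_i$ pointwise is an automorphism of $M_\rho$. Each such automorphism fixes $X_{A\cup B}$ setwise, hence fixes $F$ setwise. This forces $F\cap X_i$ to be either $\emptyset$ or all of $X_i$, so $F=X_C$ where $C=\{i\in E\colon X_i\subseteq F\}$. Since $F$ is a flat of $M_\rho$ and $\hat{0}_\rho\subseteq A\subseteq C$, Lemma \ref{lem:flatnat} yields $C\in\mathcal{F}_\rho$ with $\phi(C)=F$, so $\phi(\mathcal{F}_\rho)$ is closed under joins as well.
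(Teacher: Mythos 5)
Your proof is correct and follows the route the paper intends: the corollary is stated without proof as a consequence of Lemma \ref{lem:flatnat}, and your map $A\mapsto X_A$ together with closure of the image under intersection is exactly the implicit argument. The one point the paper leaves unstated --- that $\cl_{M_\rho}(X_{A\cup B})$ is again of the form $X_C$ --- you handle correctly via the clone automorphisms, which is the same mechanism the paper invokes later in the proof of Lemma \ref{lem:clprops}.
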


By \cite[Theorem 2.1]{cyc}, every finite lattice is isomorphic to the
lattice of cyclic flats of a matroid.  With that and the construction
in Theorem \ref{thm:reppoly}, it follows that, in contrast to
matroids, every finite lattice is isomorphic to the lattice of flats
of an integer polymatroid.

\begin{lemma}\label{lem:rholattice}
  For a polymatroid $\rho$ on $E$, the intersection of two flats is a
  flat, so, ordered by inclusion, $\mathcal{F}_\rho$ is a lattice.
\end{lemma}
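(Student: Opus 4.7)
The goal is to establish closure of $\mathcal{F}_\rho$ under intersection; once that is in hand, $\mathcal{F}_\rho$ becomes a meet-semilattice with top element $E$ (which is trivially a flat), and such a finite collection is automatically a lattice with $A \vee B$ equal to the intersection of all flats containing $A \cup B$. So the entire statement reduces to the single claim that if $A$ and $B$ are flats, then $A \cap B$ is a flat.

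To prove this claim, I would fix $i \in E - (A \cap B)$ and aim to show that $\rho((A \cap B) \cup i) > \rho(A \cap B)$. By symmetry I may assume $i \notin A$. The key step is to apply submodularity to the pair $A$ and $(A \cap B) \cup i$: using $i \notin A$ and $A \cap B \subseteq A$, one checks that their union is $A \cup i$ and their intersection is $A \cap B$, so submodularity gives
$$\rho(A \cup i) + \rho(A \cap B) \leq \rho(A) + \rho((A \cap B) \cup i).$$
Since $A$ is a flat and $i \notin A$, the difference $\rho(A \cup i) - \rho(A)$ is strictly positive, and rearranging yields $\rho((A \cap B) \cup i) > \rho(A \cap B)$, as required.

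No step here presents a real obstacle; the whole argument is a one-line submodularity computation combined with the defining property of the flat $A$. Notably, there is no need to pass through the natural matroid (which in any case is only available in the integer setting), so the argument works uniformly for arbitrary polymatroids, matching the generality of the lemma. The only small care needed is in the symmetric case $i \notin B$, which is handled by swapping the roles of $A$ and $B$ in the submodularity step above.
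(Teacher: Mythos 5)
Your argument is correct and is essentially the paper's own proof: the paper's one-line inequality $\rho((A\cap B)\cup e) - \rho(A\cap B) \geq \rho(A\cup e)-\rho(A)>0$ is precisely your submodularity computation applied to the pair $A$ and $(A\cap B)\cup e$, using that $A$ is a flat and $e\notin A$. Your additional remarks (the reduction of the lattice claim to closure under meets plus the top element $E$, and the observation that the natural matroid is not needed) are accurate but routine.
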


\begin{proof}
  Fix $A,B\in \mathcal{F}_\rho$ and $e\in E-(A\cap B)$; say
  $e\not\in A$.  From submodularity and these assumptions,
  $\rho\bigl((A\cap B)\cup e\bigr) - \rho(A\cap B) \geq \rho(A\cup
  e)-\rho(A)>0$, as needed.
\end{proof}

This lemma justifies extending the definition of the closure operator
from matroids to polymatroids.  The \emph{closure operator
  $\cl_\rho : 2^E \to 2^E$ of a polymatroid} $\rho$ on $E$ is given by
\begin{equation}\label{eqn:dfnclosure}
  \cl_\rho(A) = \bigcap\{F \,:\, F \in \mathcal{F}_\rho \text{ and } A
  \subseteq F\}
\end{equation}
for $A \subseteq E$; equivalently, $\cl_\rho(A)$ is
the minimum flat (with respect to inclusion) that is a superset of
$A$.  Several results follow immediately:
$\cl_\rho(A) \in \mathcal{F}_\rho$ by Lemma \ref{lem:rholattice}, the
image of $\cl_\rho$ is $\mathcal{F}_\rho$, and $\cl_\rho$ is a closure
operator in the general sense, that is, (i) $A \subseteq \cl_\rho(A)$
for all $A \subseteq E$, (ii) if $A \subseteq B \subseteq E$, then
$\cl_\rho(A) \subseteq \cl_\rho(B)$, and (iii)
$\cl_\rho(\cl_\rho(A)) = \cl_\rho(A)$ for all $A \subseteq E$.  The
MacLane-Steinitz exchange property of matroid closure operators fails
for most polymatroids; for instance, in the integer polymatroid $\rho$
in Figure \ref{fig:NatOfBoolean}, (c), we have
$e_2\in \cl_\rho(e_3)-\cl_\rho(\emptyset)$ but
$e_3\not\in\cl_\rho(e_2)$.

\begin{lemma}\label{lem:clreform}
  Let $\rho$ be a polymatroid on $E$.  If $A\subseteq E$, then
  $\cl_\rho(A) = \{i \,:\, \rho(A \cup i) = \rho(A)\}$ and
  $\rho(A) = \rho(\cl_\rho (A))$.
\end{lemma}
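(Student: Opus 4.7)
Write $S(A) = \{i\in E \,:\, \rho(A\cup i)=\rho(A)\}$. The plan is to prove the two equalities by showing $S(A)=\cl_\rho(A)$ first (which immediately implies the rank statement, once we have verified $\rho(S(A))=\rho(A)$). Trivially $A\subseteq S(A)$ by non-decreasingness. Along the way I will use only non-decreasingness and submodularity, so the argument works for general (not just integer) polymatroids, exactly as the lemma claims.

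First I would show $\rho(S(A))=\rho(A)$. Enumerate $S(A)-A=\{i_1,\ldots,i_t\}$ and prove by induction on $s$ that $\rho(A\cup\{i_1,\ldots,i_s\})=\rho(A)$. The base case $s=0$ is trivial. For the inductive step, apply submodularity to the sets $A\cup\{i_1,\ldots,i_{s-1}\}$ and $A\cup i_s$: their union is $A\cup\{i_1,\ldots,i_s\}$ and their intersection is $A$ (if $i_s\notin\{i_1,\dots,i_{s-1}\}$), giving
$$\rho(A\cup\{i_1,\ldots,i_s\})+\rho(A)\leq \rho(A\cup\{i_1,\ldots,i_{s-1}\})+\rho(A\cup i_s)=2\rho(A),$$
and monotonicity supplies the reverse inequality.

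Next I would show that $S(A)$ is itself a flat and is contained in every flat $F$ containing $A$. For the first: if $j\notin S(A)$, then $\rho(S(A)\cup j)\geq \rho(A\cup j)>\rho(A)=\rho(S(A))$ by monotonicity and the previous step, so $S(A)\cup j$ has strictly larger rank than $S(A)$. For the second: fix a flat $F\supseteq A$ and suppose for contradiction that some $i\in S(A)$ lies outside $F$. Apply submodularity to $F$ and $A\cup i$; since $A\subseteq F$ and $i\notin F$, their intersection is $A$ and their union is $F\cup i$, so
$$\rho(F\cup i)+\rho(A)\leq \rho(F)+\rho(A\cup i)=\rho(F)+\rho(A),$$
forcing $\rho(F\cup i)\leq \rho(F)$, which contradicts $F$ being a flat.

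Combining these two facts, $S(A)$ is the minimum flat containing $A$, so $S(A)=\cl_\rho(A)$ by the definition in Equation (\ref{eqn:dfnclosure}). The equality $\rho(A)=\rho(\cl_\rho(A))$ is then just the rank computation done in the first step. The only step with any subtlety is the submodularity argument proving $S(A)\subseteq F$ for an arbitrary flat $F\supseteq A$; once one notices that $F\cap(A\cup i)=A$ under the standing hypothesis $A\subseteq F$ and $i\notin F$, the rest is bookkeeping.
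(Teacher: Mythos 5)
Your proof is correct and follows essentially the same route as the paper's: define $S(A)=\{i:\rho(A\cup i)=\rho(A)\}$, use repeated submodularity to get $\rho(S(A))=\rho(A)$, check $S(A)$ is a flat, and use submodularity once more to show $S(A)$ is contained in every flat containing $A$. You have merely spelled out the induction and the intersection/union bookkeeping that the paper leaves implicit.
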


\begin{proof}
  Let $X=\{i \,:\, \rho(A \cup i) = \rho(A)\}$.  Now $A\subseteq X$.
  Repeated use of submodularity gives $\rho(A) = \rho(X)$.  If
  $i\in E-X$, then $\rho(X\cup i)\geq \rho(A\cup i)>\rho(A)=\rho(X)$,
  so $X$ is a flat.  Let $F$ be a flat with $A\subseteq F$.  Now
  $X\subseteq F$ since, for any $i\in X$, from
  $ \rho(A \cup i) = \rho(A)$ we get $\rho(F \cup i) = \rho(F)$ by
  submodularity.  Thus, $\cl_\rho(A)= X$.
\end{proof}

We next give properties of the closure operator that are special to
integer polymatroids.

\begin{lemma}\label{lem:clprops}
  For an integer polymatroid $\rho$ on $E$, let $\hat{0}_\rho$ be
  $\{i\,:\,\rho(i)=0\}$.  For $A\subseteq E$,
  \begin{enumerate}
  \item $\cl_\rho(A)=B$ if and only if $\hat{0}_\rho\subseteq B$ and
    $\cl_{M_\rho}(X_A)=X_B$, and
  \item $\cl_\rho(A)=A\cup \hat{0}_\rho \cup C_A$ where $C_A$ is the
    set of all $i\in E$ for which some circuit $\mathbf{u}$ of $\rho$
    has $u_i=1$ and $u_j=0$ for all $j\in E-(A\cup i)$.
  \end{enumerate}
\end{lemma}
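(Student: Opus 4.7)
The plan is to lift the closure statement in the natural matroid, where we already understand cloning, to the polymatroid by using Lemma~\ref{lem:clreform} and Lemma~\ref{lem:flatnat}. The central observation throughout is that, since each $X_i$ is a set of clones in $M_\rho$ (as established in Section~\ref{sec:nat}), any flat of $M_\rho$ contains $X_i$ entirely or is disjoint from it. Consequently, $\cl_{M_\rho}(X_A)$ is automatically of the form $X_B$ for a unique $B\subseteq E$, and this $B$ is the candidate for $\cl_\rho(A)$.

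For part (1), I would first prove the equivalence
$$i \in \cl_\rho(A) \iff X_i \subseteq \cl_{M_\rho}(X_A).$$
The forward direction is $\rho(A\cup i)=\rho(A)$, which since $\rho(Y)=r_{M_\rho}(X_Y)$ becomes $r_{M_\rho}(X_A \cup X_i) = r_{M_\rho}(X_A)$, hence $X_i \subseteq \cl_{M_\rho}(X_A)$; the converse runs backwards. Setting $B=\{i : X_i\subseteq \cl_{M_\rho}(X_A)\}$, the clones property gives $\cl_{M_\rho}(X_A)=X_B$, and the equivalence just shown gives $\cl_\rho(A)=B$. Loops of $\rho$ lie in every flat (e.g.\ by Lemma~\ref{lem:clreform} and submodularity), so $\hat{0}_\rho\subseteq B$. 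The converse direction of (1) is then immediate: if $\hat{0}_\rho\subseteq B$ and $\cl_{M_\rho}(X_A)=X_B$, Lemma~\ref{lem:flatnat} says $B$ is a flat of $\rho$ containing $A$, and for any flat $B'$ of $\rho$ containing $A$ we have $X_{B'}\supseteq X_A$ with $X_{B'}$ a flat of $M_\rho$ (Lemma~\ref{lem:flatnat} again), so $X_B=\cl_{M_\rho}(X_A)\subseteq X_{B'}$, i.e.\ $B\subseteq B'$.

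For part (2), the inclusion $A\cup\hat{0}_\rho\subseteq\cl_\rho(A)$ is trivial. Next I verify $C_A\subseteq\cl_\rho(A)$: if some circuit $\mathbf{u}$ of $\rho$ has $u_i=1$ and $u_j=0$ for $j\notin A\cup i$, then (by the remarks preceding Example~\ref{ex:lppms}) there is a circuit $C$ of $M_\rho$ with $\mathbf{T}(C)=\mathbf{u}$, so $C\subseteq X_A\cup X_i$ and $|C\cap X_i|=1$. The unique $x\in C\cap X_i$ lies in $\cl_{M_\rho}(C-x)\subseteq \cl_{M_\rho}(X_A)$, and then all of $X_i$ does by clones; part~(1) gives $i\in\cl_\rho(A)$. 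Conversely, take $i\in\cl_\rho(A)-(A\cup\hat{0}_\rho)$, so $\rho(i)>0$. Pick $x\in X_i$, which is not a loop (as $X_i$ is independent in $M_\rho$) and lies in $\cl_{M_\rho}(X_A)$ by part~(1), hence is contained in some circuit $C$ of $M_\rho$ with $C\subseteq X_A\cup x$. The type vector $\mathbf{u}=\mathbf{T}(C)$ is a circuit of $\rho$ with $u_i=1$ and $u_j=0$ for $j\notin A\cup i$, so $i\in C_A$.

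I do not anticipate any genuine obstacle: the argument is essentially a dictionary between $\rho$ and $M_\rho$ through the clones structure of the fibres $X_i$. The one subtlety to keep in mind is the loop case in part~(2) — elements of $\hat{0}_\rho$ are in $\cl_\rho(A)$ but need not arise from any circuit at all — which is why $\hat{0}_\rho$ appears explicitly alongside $C_A$ in the statement.
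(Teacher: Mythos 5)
Your proposal is correct and follows essentially the same route as the paper: part (1) via the clones structure of the fibres $X_i$ together with Lemmas \ref{lem:clreform} and \ref{lem:flatnat}, and part (2) by translating circuits of $M_\rho$ contained in $X_A\cup X_i$ into circuits $\mathbf{u}$ of $\rho$ with $u_i=1$. The only cosmetic difference is that for the reverse inclusion in part (2) the paper exhibits the needed circuit as the fundamental circuit of $a\in X_i$ with respect to a basis of $M_\rho|X_A$, whereas you invoke the equivalent general fact that a non-loop in $\cl_{M_\rho}(X_A)-X_A$ lies in a circuit contained in $X_A\cup a$.
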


\begin{proof}
  Each set $X_i$ is a set of clones of $M_\rho$, so
  $\cl_{M_\rho}(X_A)$ is the smallest flat $X_B$ that contains $X_A$.
  Part (1) follows from this observation and Lemma \ref{lem:flatnat}.
  For part (2), clearly $A\cup \hat{0}_\rho \subseteq \cl_\rho(A)$.
  Fix $i\in C_A$ and a circuit $\mathbf{u}$ with $u_i=1$ and $u_j=0$
  for all $j\in E-(A\cup i)$.  The circuits $C$ of $M_\rho$ with
  $\mathbf{T}(C)= \mathbf{u}$ show that
  $X_i\subseteq \cl_{M_\rho}(X_A)$; thus, $i\in \cl_\rho(A)$, and so
  $A\cup \hat{0}_\rho\cup C_A\subseteq \cl_\rho(A)$.  For the other
  inclusion, fix a basis $D$ of $M_\rho|X_A$, so $D$ is also a basis
  of $M_\rho|\cl_{M_\rho}(X_A)$.  If
  $i\in \cl_\rho(A)-(A\cup \hat{0}_\rho)$ and $a\in X_i$, then the
  type vector of the fundamental circuit of $a$ with respect to $D$
  shows that $i\in C_A$.
\end{proof}

We now turn to cyclic sets.  We first focus on integer polymatroids.

\begin{lemma}\label{lem:cycnat}
  Let $\rho$ be an integer polymatroid on $E$.  For $A\subseteq E$,
  statements \emph{(1)--(3)} are equivalent:
  \begin{itemize}
  \item[(1)] $A$ is a cyclic set of $\rho$,
  \item[(2)] $X_A$ is a cyclic set of $M_\rho$,
  \item[(3)] for each $i\in A$, either $\rho(i)=0$ or there is a
    circuit $\mathbf{u}$ of $\rho$ for which $u_i>0$ and $u_j=0$ for
    all $j\in E-A$.
  \end{itemize}
\end{lemma}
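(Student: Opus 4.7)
The plan is to prove $(1)\Leftrightarrow (3)$ directly from Lemma \ref{cyclicgivescircuit}, and then $(2)\Leftrightarrow (3)$ using the clone structure of $M_\rho$.

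For $(1)\Leftrightarrow (3)$, I would unpack the definition: $A$ is cyclic in $\rho$ exactly when $\rho(A)<\rho(A-i)+\rho(i)$ for every $i\in A$ with $\rho(i)>0$. By Lemma \ref{cyclicgivescircuit}, for each such $i$, this strict inequality holds if and only if there is a circuit $\mathbf{u}\in \mathbf{C}$ with $u_i>0$ and $u_j=0$ for all $j\in E-A$. Quantifying over $i\in A$ gives the equivalence (for $i\in A$ with $\rho(i)=0$, neither (1) nor (3) imposes any condition).

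For $(2)\Leftrightarrow (3)$, I would first note that every element $a\in E'$ is non-loop in $M_\rho$: if $a\in X_i$, then $X_i\neq\emptyset$ forces $\rho(i)\geq 1$, so by Corollary \ref{cor:indepinnatmtd}, $\{a\}$ is independent and hence $r_{M_\rho}(a)=1$. Therefore $X_A$ is cyclic in $M_\rho$ if and only if every $a\in X_A$ lies in some circuit of $M_\rho$ contained in $X_A$. The clone structure then collapses this element-by-element condition to a single condition per index: for $i\in A$ with $\rho(i)>0$, if $a\in X_i$ lies in some circuit $C\subseteq X_A$, then for each $b\in X_i$ either $b\in C$ or, by the clone automorphism, $(C-a)\cup b$ is a circuit of $M_\rho$ contained in $X_A$ that contains $b$. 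Thus (2) is equivalent to: for each $i\in A$ with $\rho(i)>0$, some element of $X_i$ lies in a circuit of $M_\rho$ contained in $X_A$.

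Translating via type vectors completes the argument: a circuit $C$ of $M_\rho$ is contained in $X_A$ iff $\mathbf{T}(C)_j=0$ for every $j\in E-A$, and $C$ meets $X_i$ iff $\mathbf{T}(C)_i>0$. Under the correspondence between circuits of $M_\rho$ and circuits of $\rho$ via $\mathbf{T}$ (recorded just before Example \ref{ex:lppms}), this is exactly (3). The only step that requires any real care is the clone-swap that reduces ``every $a\in X_A$ lies in a circuit inside $X_A$'' to ``some element of each $X_i$ with $i\in A$, $\rho(i)>0$ lies in such a circuit''; everything else is definitional unpacking together with the already proven Lemma \ref{cyclicgivescircuit}.
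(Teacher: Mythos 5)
Your proposal is correct and relies on the same ingredients as the paper's proof --- Lemma \ref{cyclicgivescircuit}, the fact that each $X_i$ is a set of clones, and the coloop characterization of cyclic sets --- merely reorganized as the two biconditionals $(1)\Leftrightarrow(3)$ and $(2)\Leftrightarrow(3)$ instead of the paper's cycle $(1)\Rightarrow(2)\Rightarrow(3)\Rightarrow(1)$. No gaps; the clone-swap step you flag as needing care is handled correctly.
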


\begin{proof}
  Assume that statement (1) holds.  For any $a\in X_A$, there is an
  $i\in A$ with $\rho(i)>0$ and $a\in X_i$.  If $a$ were a coloop of
  $M_\rho|X_A$, then all elements of $X_i$ would be coloops of
  $M_\rho|X_A$, contrary to having $\rho(A) < \rho(A - i) + \rho(i)$.
  Thus, statement (2) holds.
  
  Assume that statement (2) holds.  Fix $i\in A$ with $\rho(i)>0$.  No
  $a\in X_i$ is a coloop of $M_\rho|X_A$, so some circuit $C$ of
  $M_\rho$ has $a\in C\subseteq X_A$.  Statement (3) now follows.

  By Lemma \ref{cyclicgivescircuit}, statement (3) implies statement
  (1).
\end{proof}

We can expand the list of equivalent conditions for $X$ being a cyclic
set of a matroid $M$ (items (i)--(iii) in the second paragraph of this
section):
\begin{itemize}
\item[(iv)] $X$ is a union of cocircuits of the dual $M^*$,
\item[(v)] $E-X$ is an intersection of hyperplanes of $M^*$, and
\item[(vi)] $E-X$ is a flat of $M^*$.
\end{itemize}
The flats of $M^*$, ordered by inclusion, form a geometric lattice, so
the cyclic sets of $M$, ordered by inclusion, form a lattice, the
order-dual of which is geometric.  Thus, we have the following
corollary of Lemma \ref{lem:cycnat}.

\begin{cor}\label{cor:Ylattice}
  For an integer polymatroid $\rho$ on $E$, its set $\mathcal{Y}_\rho$
  of cyclic sets, ordered by inclusion, is a lattice.  The join of two
  cyclic sets is their union.
\end{cor}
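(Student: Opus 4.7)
The plan is to push the lattice structure on cyclic sets of the natural matroid $M_\rho$ back to $\mathcal{Y}_\rho$ via Lemma \ref{lem:cycnat}. The entire argument reduces to one observation: the map $A\mapsto X_A$ preserves unions, since $X_{A\cup B}=X_A\cup X_B$, and, by Lemma \ref{lem:cycnat}, restricts to a correspondence between $\mathcal{Y}_\rho$ and the cyclic sets of $M_\rho$ of the form $X_A$.

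First, I would invoke the matroid fact (implicit in the excerpt's items (iv)--(vi) above): the cyclic sets of any matroid are the unions of circuits, so they are closed under union, and union is their join. Second, given $A,B\in\mathcal{Y}_\rho$, Lemma \ref{lem:cycnat} makes $X_A$ and $X_B$ cyclic in $M_\rho$, hence so is $X_{A\cup B}=X_A\cup X_B$, and applying Lemma \ref{lem:cycnat} in the other direction gives $A\cup B\in\mathcal{Y}_\rho$. Because $A\cup B$ is the smallest subset of $E$ containing both $A$ and $B$, it is a fortiori the smallest cyclic set containing them, which proves the second sentence of the corollary and shows that $\mathcal{Y}_\rho$ is a join-semilattice under inclusion.

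To upgrade ``join-semilattice'' to ``lattice'', I would note that $\emptyset\in\mathcal{Y}_\rho$ since the defining inequality is vacuous on the empty set, so $\mathcal{Y}_\rho$ has a least element. A finite poset with all pairwise joins and a least element is automatically a lattice, with meet given by $A\wedge B=\bigvee\{C\in\mathcal{Y}_\rho\,:\,C\subseteq A\cap B\}$; this completes the proof.

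There is no serious obstacle: Lemma \ref{lem:cycnat} does all the heavy lifting. The only conceptual point worth flagging is the asymmetry with Corollary \ref{cor:Flattice}, where for $\mathcal{F}_\rho$ it is \emph{meets} that coincide with intersections, while for $\mathcal{Y}_\rho$ it is \emph{joins} that coincide with unions; this dichotomy mirrors the matroid situation, where cyclic sets dualize to flats of $M^*$.
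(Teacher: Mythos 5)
Your proposal is correct and follows essentially the same route as the paper: both deduce the result from Lemma \ref{lem:cycnat} together with the standard fact that the cyclic sets of a matroid are closed under union with union as join. Your extra step (adjoining the least element $\emptyset$ to upgrade the join-semilattice to a lattice) is a reasonable way to make explicit what the paper leaves implicit, and your closing remark about the flat/cyclic-set duality matches the paper's discussion of conditions (iv)--(vi).
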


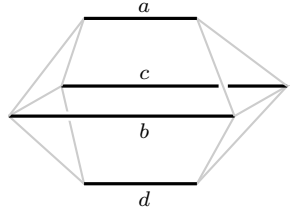
\begin{figure}
  \centering
  \begin{tikzpicture}[scale=1]
    \draw[thick,black!20](0,1)--(1,0.1)--(2.5,0.1)--(3,1);%
    \draw[thick,black!20](0.7,1.4)--(1,0.1)--(2.5,0.1)--(3.7,1.4);%
    \draw[very thick](1,0.1)--(2.5,0.1);%
    \draw[line width = 3.5pt,white](0.6,1)--(1.2,1);%
    \draw[thick,black!20](0,1)--(3,1)--(3.7,1.4)--(0.7,1.4)--(0,1);%
    \draw[very thick](0,1)--(3,1);%
    \draw[very thick](3.7,1.4)--(0.7,1.4);%
    \draw[line width = 3.5pt,white](2.85,1.3)--(2.85,1.5);%
    \draw[thick,black!20](0,1)--(1,2.3)--(2.5,2.3)--(3,1);%
    \draw[thick,black!20](0.7,1.4)--(1,2.3)--(2.5,2.3)--(3.7,1.4);%
    \draw[very thick](1,2.3)--(2.5,2.3);%

    \node at (1.8,2.45) {\footnotesize $a$};%
    \node at (1.8,0.8) {\footnotesize $b$};%
    \node at (1.8,1.55) {\footnotesize $c$};%
    \node at (1.8,-0.1) {\footnotesize $d$};%
  \end{tikzpicture}
  \caption{A $2$-polymatroid counterpart of the V\'amos matroid.  Each
    pair of lines is coplanar except $a$, $d$. }
  \label{fig:vamos}
\end{figure}

If $\rho$ is not a matroid, then the order dual of $\mathcal{Y}_\rho$
need not be a geometric lattice, as one can see from the
$2$-polymatroid counterpart of the V\'amos matroid shown in Figure
\ref{fig:vamos}, where the only sets not in $\mathcal{Y}_\rho$ are the
singleton sets and $\{a,d\}$.

Lemma 1 of \cite{cyclicpoly} shows that every flat of a polymatroid
contains a maximum cyclic flat.  By the next result and the discussion
below it, a similar statement holds for all sets, and it comes from a
property that generalizes Corollary \ref{cor:Ylattice}.

\begin{lemma}\label{lem:cycprop}
  Let $\rho$ be a polymatroid on $E$.
  \begin{itemize}
  \item[(1)] If $X,Y\in\mathcal{Y}_\rho$, then
    $X\cup Y\in\mathcal{Y}_\rho$.  Thus, under inclusion,
    $\mathcal{Y}_\rho$ is a lattice.
  \item[(2)] If $X\in\mathcal{Y}_\rho$, then
    $\cl_\rho(X)\in\mathcal{Y}_\rho$, and so
    $\cl_\rho(X)\in\mathcal{Z}_\rho$.
  \end{itemize}
\end{lemma}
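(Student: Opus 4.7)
The plan is to prove both parts directly from submodularity and Lemma~\ref{lem:clreform}, without invoking the natural matroid, since the statement is asserted for \emph{all} polymatroids, not just integer ones.

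For part (1), fix $X,Y\in\mathcal{Y}_\rho$ and $i\in X\cup Y$ with $\rho(i)>0$; by symmetry I may assume $i\in X$. I apply submodularity to the sets $X$ and $(X\cup Y)-i$. Since $i\in X$, one checks $X\cap\bigl((X\cup Y)-i\bigr)=X-i$ and $X\cup\bigl((X\cup Y)-i\bigr)=X\cup Y$, so
\[
\rho(X\cup Y)+\rho(X-i)\le \rho(X)+\rho\bigl((X\cup Y)-i\bigr).
\]
Rearranging and using that $X$ is cyclic at $i$ yields
\[
\rho(X\cup Y)-\rho\bigl((X\cup Y)-i\bigr)\le \rho(X)-\rho(X-i)<\rho(i),
\]
which is precisely the inequality required for cyclicity of $X\cup Y$ at $i$. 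Since $\mathcal{Y}_\rho$ is finite, contains $\emptyset$ (vacuously cyclic), and is now closed under union, it is a lattice under inclusion: the meet of $A,B\in\mathcal{Y}_\rho$ is the union of all members of $\mathcal{Y}_\rho$ contained in $A\cap B$.

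For part (2), set $F=\cl_\rho(X)$ and fix $i\in F$ with $\rho(i)>0$; the goal is $\rho(F)<\rho(F-i)+\rho(i)$. I split on whether $i$ lies in $X$ or in $F-X$. If $i\in X$, cyclicity of $X$ gives $\rho(X)<\rho(X-i)+\rho(i)$; combining this with $\rho(X)=\rho(F)$ from Lemma~\ref{lem:clreform} and the monotonicity bound $\rho(X-i)\le\rho(F-i)$ gives the claim. If instead $i\in F-X$, then $X\subseteq F-i\subseteq F$, so $\rho(X)\le \rho(F-i)\le \rho(F)=\rho(X)$, forcing $\rho(F-i)=\rho(F)$; the desired inequality then reduces to $\rho(i)>0$, which is assumed. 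Hence $F\in\mathcal{Y}_\rho$, and since $F$ is a flat by construction, $F\in\mathcal{Z}_\rho$.

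I do not anticipate a real obstacle. The only care needed is the set-identity bookkeeping $X\cap\bigl((X\cup Y)-i\bigr)=X-i$ in part (1), which is valid exactly because $i\in X$. The split in part (2) cannot be avoided because $\cl_\rho$ need not satisfy the MacLane--Steinitz exchange property for polymatroids (as the paper notes after Lemma~\ref{lem:clreform}), but each of the two cases is almost immediate once $\rho(X)=\rho(F)$ is in hand.
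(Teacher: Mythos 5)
Your proof is correct and follows essentially the same route as the paper: part (1) is the identical submodularity computation applied to $X$ and $(X\cup Y)-i$, and part (2) is the same case split on $i\in X$ versus $i\in \cl_\rho(X)-X$ (made implicit in the paper's one-line justification), using $\rho(X)=\rho(\cl_\rho(X))$ from Lemma~\ref{lem:clreform}. The only cosmetic difference is that in the $i\in X$ case you combine $\rho(X)=\rho(F)$ with monotonicity where the paper invokes submodularity directly; both yield the same bound.
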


\begin{proof}
  To prove part (1), fix $X,Y\in\mathcal{Y}_\rho$ and $i\in X\cup Y$
  with $\rho(i)>0$; say $i\in X$.  By the assumptions and
  submodularity,
  $\rho(X\cup Y) -\rho\bigl((X\cup Y)-i\bigr)\leq \rho(X) -
  \rho(X-i)<\rho(i)$, so $X\cup Y\in\mathcal{Y}_\rho$.  For part (2),
  take $X \in \mathcal{Y}_\rho$ and $i\in \cl_\rho(X)$ with
  $\rho(i)>0$.  Then, as needed,
  $\rho\bigl(\cl_\rho(X)\bigr)- \rho\bigl(\cl_\rho(X)-i\bigr)<\rho(i)$
  since the left side is $0$ if $i\not\in X$ (since
  $\rho(X) = \rho(X\cup i)$), and at most $ \rho(X)- \rho(X-i)$ if
  $i\in X$ (by submodularity).
\end{proof}

This lemma justifies making the following definition.  For a
polymatroid $\rho$ on $E$, its \emph{cyclic operator}
$\cy_\rho : 2^E \to 2^E$ is given by, for $A\subseteq E$,
$$\cy_\rho (A)
=\bigcup \{D \,:\, D \in \mathcal{Y}_\rho \text{ and } D \subseteq
A\}.$$ Thus, $\cy_\rho (A)$ is the maximum cyclic subset of $A$.  If
$A\in\mathcal{F}_\rho$, then $\cy_\rho(A)\in\mathcal{F}_\rho$ since
$\cy_\rho(A)\subseteq \cl_\rho(\cy_\rho(A)) \subseteq A$ and
$\cl_\rho(\cy_\rho(A))$ is cyclic (by part (2) of Lemma
\ref{lem:cycprop}) and so must be $\cy_\rho(A)$.  For a matroid $M$,
the cyclic set $\cy_M(A)$ is the union of the circuits that are
subsets of $A$.  The operator $\cy_M$ plays roles in recent papers,
such as \cite{cycbinary}.  Note that the image of $\cy_\rho$ is
precisely $\mathcal{Y}_\rho$.  Also, (i) if $A \subseteq E$, then
$\cy_\rho (A) \subseteq A$, (ii) if $A \subseteq B \subseteq E$, then
$\cy_\rho (A) \subseteq \cy_\rho (B)$, and (iii) if $A \subseteq E$,
then $\cy_\rho (\cy_\rho (A)) = \cy_\rho (A)$.

\begin{lemma}\label{lem:cyciprops}
  Let $\rho$ be an integer polymatroid on $E$.  For any set
  $A\subseteq E$, the set $\cy_\rho(A)$ is the union of all subsets of
  $A$ of either of the following forms:
  \begin{itemize}
  \item[(i)] $\{i\}$ with $\rho(i)=0$, or
  \item[(ii)] $S(\mathbf{u})=\{i\,:\,u_i\ne 0\}$ where $\mathbf{u}$ is
    a circuit of $\rho$ and $u_j=0$ for all $j\in E-A$.
  \end{itemize}
  Also, $\cy_\rho(A)=B$ if and only if $B$ is the maximum subset of
  $A$ with $\cy_{M_\rho}(X_A)=X_B$.
\end{lemma}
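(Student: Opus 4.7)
\smallskip

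The plan is to let $U$ denote the union described in the statement and prove $\cy_\rho(A)=U$ by two inclusions, leveraging the equivalence of the three conditions in Lemma \ref{lem:cycnat} together with the union-closure property in Lemma \ref{lem:cycprop}(1). The clone property of each $X_i$ in $M_\rho$ is the bridge that lets circuits of $\rho$ interact cleanly with cyclic subsets of $X_A$ in $M_\rho$, and it will do the heavy lifting for the second statement.

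For $U \subseteq \cy_\rho(A)$, each singleton $\{i\}$ of type (i) is trivially cyclic, and each set $S(\mathbf{u})$ of type (ii) is cyclic by the implication (3)$\Rightarrow$(1) of Lemma \ref{lem:cycnat}, since the circuit $\mathbf{u}$ itself witnesses the condition for every $j\in S(\mathbf{u})$ (note $u_i>0$ forces $\rho(i)>0$, as the range of $u_i$ is $[\rho(i)]_0$). Because each of these sets lies in $A$, they are cyclic subsets of $A$, and Lemma \ref{lem:cycprop}(1) gives that $U$ itself is cyclic and contained in $A$, hence $U \subseteq \cy_\rho(A)$. For the reverse inclusion, apply the implication (1)$\Rightarrow$(3) of Lemma \ref{lem:cycnat} to the cyclic set $\cy_\rho(A)$: for each $i\in \cy_\rho(A)$, either $\rho(i)=0$ (so $\{i\}$ is of type (i) in $U$) or there is a circuit $\mathbf{u}$ of $\rho$ with $u_i>0$ and $u_j=0$ for all $j\in E-\cy_\rho(A)$; since $E-A\subseteq E-\cy_\rho(A)$, this $\mathbf{u}$ also witnesses type (ii), and $S(\mathbf{u})\subseteq \cy_\rho(A)\subseteq A$, so $i\in U$.

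For the second assertion, I would first show that $\cy_{M_\rho}(X_A)$ necessarily has the form $X_B$ for some $B\subseteq E$: if $a\in \cy_{M_\rho}(X_A)$ lies in $X_i$, then some circuit $C\subseteq X_A$ of $M_\rho$ contains $a$, and replacing $a$ by any other $b\in X_i$ yields another circuit contained in $X_A$ (since $X_i$ is a set of clones and $X_i\subseteq X_A$), so all of $X_i$ is in $\cy_{M_\rho}(X_A)$. Taking type vectors of such circuits matches the description in the first part, yielding $\cy_{M_\rho}(X_A)=X_{U}$ with $U=\cy_\rho(A)$. Uniqueness of the index set up to loops gives the ``maximum subset of $A$'' phrasing: any two subsets $B,B'\subseteq A$ with $X_B=X_{B'}$ differ only in loop elements, and $\cy_\rho(A)$ contains every loop $i\in A$ (these are precisely the sets of type (i)), so $B=\cy_\rho(A)$ is indeed the unique maximum such subset.

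The one subtle point that needs care, and the most likely source of a slip, is handling the loop elements in the bijection between cyclic subsets of $A$ (in $\rho$) and cyclic subsets of $X_A$ (in $M_\rho$): loops contribute $X_i=\emptyset$, so $\cy_{M_\rho}$ does not see them, and the ``maximum'' qualifier in the final equivalence exists solely to recover them. Being explicit that type (i) elements automatically belong to $\cy_\rho(A)$ is what makes the characterization sharp on both sides.
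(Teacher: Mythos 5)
Your proof is correct and follows essentially the same route as the paper, which simply cites Lemma \ref{lem:cycnat} together with the definition of $\cy_\rho$ for the first assertion and the circuit correspondence between $\rho$ and $M_\rho$ for the second; you have merely written out the details (including the clone argument and the treatment of loops) that the paper leaves implicit.
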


\begin{proof}
  The first assertion follows from Lemma \ref{lem:cycnat} and the
  definition of $\cy_\rho$.  That and the connection between the
  circuits of $\rho$ and those of $M_\rho$ give the second assertion.
\end{proof}

We state the next lemma, which is basic and well known, so that we
can cite it.

\begin{lemma}\label{lem:superbasic}
  Let $\rho$ be a polymatroid on $E$.  Assume that $A\subseteq E$,
  that $i\in A$, and that $\rho(A) = \rho(A - i) + \rho(i)$.  If
  $Y\subseteq A$ and $i\in Y$, then $\rho(Y) = \rho(Y - i) + \rho(i)$.
\end{lemma}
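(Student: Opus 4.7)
The plan is to prove the equality by establishing both inequalities via a single application of submodularity in each direction. The non-trivial inequality $\rho(Y-i)+\rho(i)\leq \rho(Y)$ is where the hypothesis $\rho(A)=\rho(A-i)+\rho(i)$ gets used.

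First, I would note that since $\rho$ is submodular and normalized, applying submodularity to $Y-i$ and $\{i\}$ (whose intersection is empty) yields the routine upper bound
\begin{equation*}
\rho(Y) \;\leq\; \rho(Y-i)+\rho(i).
\end{equation*}

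The heart of the argument is the reverse inequality. I would apply submodularity to the sets $Y$ and $A-i$. Since $i\in Y\subseteq A$, we have $Y\cup(A-i)=A$ and $Y\cap(A-i)=Y-i$. Submodularity then gives
\begin{equation*}
\rho(A)+\rho(Y-i) \;\leq\; \rho(Y)+\rho(A-i).
\end{equation*}
Substituting the hypothesis $\rho(A)=\rho(A-i)+\rho(i)$ on the left and canceling $\rho(A-i)$ yields
\begin{equation*}
\rho(i)+\rho(Y-i)\;\leq\;\rho(Y),
\end{equation*}
which combined with the first inequality forces equality.

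There is no real obstacle here; the only thing to verify carefully is the set-theoretic identity $Y\cup(A-i)=A$, which uses both $Y\subseteq A$ and $i\in Y$. The argument uses only submodularity and normalization, so it is valid for arbitrary (not necessarily integer) polymatroids, matching the level of generality in the statement.
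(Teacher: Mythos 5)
Your argument is correct and is essentially identical to the paper's one-line proof, which chains the same two submodularity applications as $\rho(i)\geq \rho(Y)-\rho(Y-i)\geq \rho(A)-\rho(A-i)=\rho(i)$. You have simply written out the two inequalities separately with the set-theoretic details made explicit.
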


\begin{proof}
  By submodularity,
  $\rho(i)\geq \rho(Y)-\rho(Y-i)\geq \rho(A)-\rho(A-i)= \rho(i)$.
\end{proof}

The next lemma identifies the elements in $A-\cy_\rho(A)$ as the
counterparts of coloops in the deletion $\rho_{\del E-A}$.  

\begin{lemma}\label{lem:cycprops}
  Let $\rho$ be a polymatroid on $E$.  For any set $A\subseteq E$,
  \begin{enumerate}
  \item
    $\cy_\rho (A) = A - \{i \in A\, :\, \rho(i) > 0 \text{ and }
    \rho(A) = \rho(A - i) + \rho(i)\}$, and
  \item
    $\displaystyle{ \rho(A) = \rho(\cy_\rho (A)) + \sum_{i\in
        A-\cy_\rho(A)} \rho(i)}$.
  \end{enumerate}
\end{lemma}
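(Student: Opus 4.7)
The plan is to set $T = \{i \in A : \rho(i) > 0 \text{ and } \rho(A) = \rho(A - i) + \rho(i)\}$ and prove part (1) by showing $\cy_\rho(A) = A - T$ via two inclusions; part (2) will drop out of the calculation used along the way. Lemma \ref{lem:superbasic} plays the central role, since it guarantees that the ``coloop-like'' behavior of an element $i$ at $A$ is inherited by every subset $Y$ of $A$ containing $i$.

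For the inclusion $\cy_\rho(A) \subseteq A - T$, I would show that every cyclic subset $D$ of $A$ is disjoint from $T$. If some $i \in D \cap T$, then $\rho(i) > 0$ together with the cyclicity of $D$ gives $\rho(D) < \rho(D - i) + \rho(i)$, while Lemma \ref{lem:superbasic} applied to $Y = D$ yields $\rho(D) = \rho(D - i) + \rho(i)$, a contradiction. Taking the union over all cyclic $D \subseteq A$ then gives $\cy_\rho(A) \subseteq A - T$.

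For the reverse inclusion, I would verify directly that $A - T$ is cyclic. Enumerating $T = \{j_1, \ldots, j_k\}$ and applying Lemma \ref{lem:superbasic} once for each $j_t$ (with $Y = A - \{j_1, \ldots, j_{t-1}\}$, a subset of $A$ still containing $j_t$), I would obtain by induction the identity
\[
\rho(A) = \rho(A - T) + \sum_{j \in T}\rho(j).
\]
The same induction, with the role of $A$ played by $A - i$ for any fixed $i \in A - T$, gives $\rho(A - i) = \rho(A - T - i) + \sum_{j \in T}\rho(j)$, since every $j \in T$ still lies in $A - i$ and the defining equality $\rho(A) = \rho(A - j) + \rho(j)$ for $j \in T$ transfers to $A - i$ via Lemma \ref{lem:superbasic}. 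Subtracting these two identities gives $\rho(A) - \rho(A - i) = \rho(A - T) - \rho(A - T - i)$. Now for $i \in A - T$ with $\rho(i) > 0$, the failure of $i$ to satisfy the defining equation of $T$ forces $\rho(A) < \rho(A - i) + \rho(i)$, so $\rho(A - T) < \rho(A - T - i) + \rho(i)$, confirming that $A - T$ is cyclic.

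Combining the two inclusions proves part (1), and rewriting $T$ as $A - \cy_\rho(A)$ in the displayed identity above yields part (2). The main obstacle is only the bookkeeping in the iteration: one must check at each step that Lemma \ref{lem:superbasic} is being applied to a set still containing the next element of $T$ and that the equality $\rho(A) = \rho(A - j) + \rho(j)$ propagates correctly through the pared-down sets. Once this is in place, no submodular manipulation beyond Lemma \ref{lem:superbasic} is required.
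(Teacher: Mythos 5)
Your proposal is correct and follows essentially the same route as the paper: both define the same set $T$, use Lemma \ref{lem:superbasic} to show no cyclic subset of $A$ meets $T$, and establish $\rho(A)=\rho(A-T)+\sum_{j\in T}\rho(j)$ by peeling off elements of $T$ one at a time. The only difference is the last step: the paper derives cyclicity of $A-T$ by contradiction using one submodularity estimate, whereas you run the same telescoping on $A-i$ and subtract, which is an equally valid (arguably slightly cleaner) way to finish.
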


\begin{proof}
  Let
  $X = \{i \in A\, :\, \rho(i) > 0 \text{ and } \rho(A) = \rho(A - i)
  + \rho(i)\}$.  By Lemma \ref{lem:superbasic}, no cyclic subset of
  $A$ contains any $i\in X$, so $\cy_\rho(A)\subseteq A-X$.  Part (1)
  will follow by showing that $A-X$ is cyclic.  First note that
  repeatedly applying Lemma \ref{lem:superbasic}, adding one element
  at a time to go from $A-X$ to $A$, gives
  \begin{equation}\label{eq:cyceq}
    \rho(A) = \rho(A-X) + \sum_{i\in X} \rho(i).
  \end{equation}
  If there were a $j\in A-X$ with $\rho(j)>0$ and
  $ \rho(A-X) = \rho((A-X)-j) +\rho(j)$, then this equality, Equation
  (\ref{eq:cyceq}), and submodularity would give
  $$\rho(A)-\rho(j) = \rho((A-X)-j) + \sum_{i\in X} \rho(i)\geq
  \rho(A-j).$$ This inequality is contrary to having $j\not\in X$, so
  $A-X$ is cyclic.  Part (2) follows from part (1) and Equation
  (\ref{eq:cyceq}).
\end{proof}

The next lemma is like part (2) of Lemma \ref{lem:cycprop}, but
switches flats and cyclic sets.
  
\begin{lemma}\label{lem:cycwclprops}
  For a polymatroid $\rho$ on $E$, if $A \in \mathcal{F}_\rho$, then
  $\cy_\rho (A) \in \mathcal{F}_\rho$, so
  $\cy_\rho (A) \in \mathcal{Z}_\rho$.
\end{lemma}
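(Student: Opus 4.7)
The plan is to show directly that $C := \cy_\rho(A)$ has no element outside it that can be added without increasing the rank, i.e.\ that $\rho(C \cup j) > \rho(C)$ for every $j \in E - C$. I would partition $E - C$ into the two pieces $A - C$ and $E - A$, and handle each case by applying submodularity to $C \cup j$ together with a carefully chosen companion set built from $A$. The fact that $\cy_\rho(A) \in \mathcal{Y}_\rho$ is already known from the definition and part (1) of Lemma \ref{lem:cycprop}, so once the flat property is established, membership in $\mathcal{Z}_\rho$ follows immediately.

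For $j \in A - C$, I would invoke Lemma \ref{lem:cycprops}(1) to conclude that $\rho(j) > 0$ and $\rho(A) = \rho(A - j) + \rho(j)$. Since $C \subseteq A - j$ (as $j \notin C$), the pair $C \cup j$ and $A - j$ has union $A$ and intersection $C$, so submodularity gives
$$\rho(C \cup j) + \rho(A - j) \geq \rho(A) + \rho(C),$$
which rearranges to $\rho(C \cup j) - \rho(C) \geq \rho(j) > 0$, as needed.

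For $j \in E - A$, flatness of $A$ gives $\rho(A \cup j) > \rho(A)$. The pair $C \cup j$ and $A$ has union $A \cup j$ and intersection $C$ (since $j \notin A$ and $C \subseteq A$), so submodularity yields
$$\rho(C \cup j) + \rho(A) \geq \rho(A \cup j) + \rho(C),$$
hence $\rho(C \cup j) - \rho(C) \geq \rho(A \cup j) - \rho(A) > 0$.

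There is no real obstacle beyond selecting the right companion set for submodularity in each of the two cases; the characterization of $A - \cy_\rho(A)$ in Lemma \ref{lem:cycprops}(1) is exactly what is needed to convert the strict inequality $\rho(A) > \rho(A-j)$ (which does \emph{not} come from $A$ being a flat in case 1) into the desired $\rho(C \cup j) > \rho(C)$. Combining the two cases shows $C \in \mathcal{F}_\rho$, and since $C \in \mathcal{Y}_\rho$, we conclude $\cy_\rho(A) \in \mathcal{Z}_\rho$.
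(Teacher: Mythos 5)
Your proof is correct and follows essentially the same route as the paper: split $E-\cy_\rho(A)$ into $A-\cy_\rho(A)$ and $E-A$, use Lemma \ref{lem:cycprops}(1) plus submodularity for the first case and flatness of $A$ plus submodularity for the second. The only cosmetic difference is that in the first case you unwind the submodularity argument explicitly where the paper cites Lemma \ref{lem:superbasic}, which encapsulates exactly that computation.
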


\begin{proof}
  Fix $A \in \mathcal{F}_\rho$ and $i\not\in\cy_\rho(A)$.  We must
  show that $\rho(\cy_\rho(A) \cup i) > \rho(\cy_\rho (A))$.  This
  holds by Lemmas \ref{lem:cycprops} and \ref{lem:superbasic} if
  $i \in A - \cy_\rho (A)$.  If $i \not\in A$, then the assumption
  $A \in \mathcal{F}_\rho$ and submodularity give
  $\rho(\cy_\rho(A) \cup i) - \rho(\cy_\rho (A))\geq \rho(A \cup i) -
  \rho(A)>0$.
\end{proof}

With Lemmas \ref{lem:cycprop} and \ref{lem:cycwclprops}, we see that
$\mathcal{Z}_\rho$ is a lattice: for $A,B\in \mathcal{Z}_\rho$, their
meet is $A\wedge B=\cy_\rho(A\cap B)$ and their join is
$A\vee B = \cl_\rho(A\cup B)$.

The next lemma, along with Lemma \ref{lem:cycprops}, is a basic tool
for investigating the sets $\mathcal{R}_\rho(A)$, which we defined in
Equation (\ref{eq:calR}).

\begin{lemma}\label{lem:recastR}
  Let $\rho$ be a polymatroid on $E$.  For any subsets $A$ and $B$ of
  $E$, the equality
  \begin{equation}\label{eq:recastR}
    \rho(A)=\rho(B)+\sum_{i\in A-B}\rho(i)
  \end{equation}
  holds if and only if
  \begin{enumerate}
  \item $\rho(A) = \rho(A-i)+\rho(i)$ for all $i\in A-B$, and
  \item $\rho(A\cap B)=\rho(B)$ (equivalently,
    $\cl_\rho(A\cap B)=\cl_\rho(B)$).
  \end{enumerate}
\end{lemma}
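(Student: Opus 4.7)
The plan is to prove the forward and reverse directions separately, then verify the equivalence $\rho(A\cap B)=\rho(B) \iff \cl_\rho(A\cap B)=\cl_\rho(B)$.

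For the forward direction, I will assume Equation (\ref{eq:recastR}) holds and extract both conclusions from two ``chained submodularity'' inequalities applied to the collection $A-B$. First, by adding the elements of $A-B$ one at a time to $A\cap B$, submodularity together with $\rho(\emptyset)=0$ gives $\rho(A)\le \rho(A\cap B)+\sum_{i\in A-B}\rho(i)$. Combining this with the hypothesis (\ref{eq:recastR}) yields $\rho(B)\le \rho(A\cap B)$, which together with monotonicity $\rho(A\cap B)\le \rho(B)$ gives conclusion~(2). For conclusion~(1), fix $i\in A-B$ and add the elements of $(A-B)-i$ to $B$ one at a time; chained submodularity gives $\rho(A-i)\le \rho(B)+\sum_{j\in (A-B)-i}\rho(j)=\rho(A)-\rho(i)$ by (\ref{eq:recastR}), and this combined with the universal inequality $\rho(A)\le \rho(A-i)+\rho(i)$ forces equality.

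For the reverse direction, I will assume (1) and (2) and iteratively strip off elements of $A-B$ using Lemma~\ref{lem:superbasic}. Enumerate $A-B=\{i_1,\ldots,i_k\}$ and set $A_0=A$, $A_\ell=A-\{i_1,\ldots,i_\ell\}$. Condition~(1) says $\rho(A)=\rho(A-i_\ell)+\rho(i_\ell)$, and since $A_{\ell-1}\subseteq A$ with $i_\ell\in A_{\ell-1}$, Lemma~\ref{lem:superbasic} gives $\rho(A_{\ell-1})=\rho(A_\ell)+\rho(i_\ell)$. Summing telescopically yields $\rho(A)=\rho(A\cap B)+\sum_{i\in A-B}\rho(i)$, and substituting~(2) gives (\ref{eq:recastR}).

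For the parenthetical equivalence, one direction is immediate from Lemma~\ref{lem:clreform}: if $\cl_\rho(A\cap B)=\cl_\rho(B)$, then $\rho(A\cap B)=\rho(\cl_\rho(A\cap B))=\rho(\cl_\rho(B))=\rho(B)$. For the converse, assume $\rho(A\cap B)=\rho(B)$; then for any $i\in B-A$, monotonicity sandwiches $\rho((A\cap B)\cup i)$ between $\rho(A\cap B)$ and $\rho(B)$, both equal, so $i\in \cl_\rho(A\cap B)$ by Lemma~\ref{lem:clreform}. Hence $B\subseteq \cl_\rho(A\cap B)$, which together with the trivial reverse containment gives $\cl_\rho(B)=\cl_\rho(A\cap B)$. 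The main obstacle is only bookkeeping in the forward direction, making sure the chained submodularity inequality is deployed in two slightly different ways to pull out the two conclusions simultaneously.
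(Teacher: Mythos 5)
Your proposal is correct and follows essentially the same route as the paper: chained submodularity to get $\rho(A)\le\rho(A\cap B)+\sum_{i\in A-B}\rho(i)$ and hence condition (2), a second submodularity chain sandwiching $\rho(A-i)+\rho(i)$ to get condition (1), and iterated use of Lemma~\ref{lem:superbasic} for the converse. The only differences are cosmetic (you add elements to $B$ rather than to $A\cap B$ when bounding $\rho(A-i)$, and you spell out the parenthetical equivalence via Lemma~\ref{lem:clreform}, which the paper leaves implicit).
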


\begin{proof}
  First assume that properties (1) and (2) hold.  Applying Lemma
  \ref{lem:superbasic} to add one element at a time going from
  $A\cap B$ to $A$ gives
  $$\rho(A)=\rho(A\cap B)+\sum_{i\in A-B}\rho(i)$$ and replacing
  $\rho(A\cap B)$ by $\rho(B)$, as (2) justifies, yields Equation
  (\ref{eq:recastR}).

  Now assume that Equation (\ref{eq:recastR}) holds.  Repeated uses of
  submodularity give
  $$\rho(A)\leq \rho(A\cap B)+\sum_{i\in A-B}\rho(i).$$
  Also, $\rho(A\cap B)\leq \rho(B)$.  These inequalities and Equation
  (\ref{eq:recastR}) give $\rho(A\cap B)=\rho(B)$, so property (2)
  holds.  With this, for any $i\in A-B$, we have
  $$\rho(A)=\rho(A\cap B)+\Bigl(\sum_{j\in A-B, j\ne
    i}\rho(j)\Bigr)+\rho(i)\geq \rho(A-i)+\rho(i)\geq \rho(A),$$ from
  which we get $\rho(A) = \rho(A-i)+\rho(i)$, so property (1) holds.
\end{proof}

We now consider the operators $\cl$ and $\cy$ together.  Note that if
$B$ is a basis of a matroid $M$ that has neither loops nor coloops,
then $\cl(\cy(B))=\emptyset$ but $\cy(\cl(B))=E(M)$; thus, $\cl$ and
$\cy$ need not commute.  Lemmas \ref{lem:flatnat} and \ref{lem:cycnat}
give the following result.

\begin{cor}\label{cor:relatecfs}
  For an integer polymatroid $\rho$ on $E$, let $\hat{0}_\rho$ be
  $\{i\in E\,:\,\rho(i)=0\}$. For $A\subseteq E$, we have
  $A\in \mathcal{Z}_\rho$ if and only if $\hat{0}_\rho\subseteq A$ and
  $X_A\in \mathcal{Z}_{M_\rho}$.
\end{cor}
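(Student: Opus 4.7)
The plan is to deduce the corollary immediately by combining the flat/cyclic correspondences already established between $\rho$ and its natural matroid $M_\rho$. Recall that $\mathcal{Z}_\rho$ consists of the subsets of $E$ that are simultaneously flats of $\rho$ and cyclic sets of $\rho$, and likewise $\mathcal{Z}_{M_\rho}$ consists of the subsets of $E'$ that are simultaneously flats and cyclic sets in $M_\rho$. So the task reduces to checking that the biconditional respects this "intersection" structure on both sides.

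For the forward direction, suppose $A \in \mathcal{Z}_\rho$. Since $A$ is a flat of $\rho$, Lemma \ref{lem:flatnat} yields both $\hat{0}_\rho \subseteq A$ and that $X_A$ is a flat of $M_\rho$. Since $A$ is also cyclic in $\rho$, Lemma \ref{lem:cycnat} (the equivalence of conditions (1) and (2)) gives that $X_A$ is a cyclic set of $M_\rho$. Combining, $X_A \in \mathcal{Z}_{M_\rho}$.

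For the converse, suppose $\hat{0}_\rho \subseteq A$ and $X_A \in \mathcal{Z}_{M_\rho}$. Then $X_A$ is a flat of $M_\rho$, so by Lemma \ref{lem:flatnat} (using the hypothesis $\hat{0}_\rho \subseteq A$), $A$ is a flat of $\rho$. Also $X_A$ is cyclic in $M_\rho$, so by Lemma \ref{lem:cycnat}, $A$ is cyclic in $\rho$. Hence $A \in \mathcal{Z}_\rho$.

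There is no real obstacle here: the substantive work was done in Lemmas \ref{lem:flatnat} and \ref{lem:cycnat}, which translate the flat and cyclic-set properties between $\rho$ and $M_\rho$ separately. The only subtlety worth flagging is the role of the hypothesis $\hat{0}_\rho \subseteq A$, which is needed in both directions of the "flat" equivalence (Lemma \ref{lem:flatnat}) but is automatic on the "cyclic" side, and which is why it appears explicitly in the statement of the corollary.
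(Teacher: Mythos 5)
Your proof is correct and follows exactly the route the paper takes: the paper states that Lemmas \ref{lem:flatnat} and \ref{lem:cycnat} give the corollary, which is precisely your argument of intersecting the flat correspondence with the cyclic-set correspondence. Nothing further is needed.
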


For an integer polymatroid $\rho$, since all cyclic flats of $M_\rho$
have the form $X_A$ for some $A\subseteq E$ and the map
$\phi:\mathcal{Z}_\rho\to \mathcal{Z}_{M_\rho}$ where $\phi(A)=X_A$ is
a bijection, properties that can be described via cyclic flats lift
from matroids to integer polymatroids.  With these ideas, the case of
Theorem \ref{thm:pcfaxiom} for integer polymatroids follows from
Theorem \ref{thm:cfaxiom}.

Not all properties of cyclic flats for matroids extend to
polymatroids.  For instance, for matroids, the cyclic flats of the
dual $M^*$ are the set complements of the cyclic flats of $M$, so
$\mathcal{Z}_{M^*}$ is isomorphic to the order dual of
$\mathcal{Z}_M$.  The same is not true for $k$-polymatroids and their
$k$-duals, as one can check using the example in Figure
\ref{fig:BadCirCompls} or \ref{fig:vamos}.

To conclude, we use Lemmas \ref{lem:superbasic}, \ref{lem:cycprops},
and \ref{lem:recastR} to show that $\mathcal{R}_\rho(A)$ is a
sublattice of $\mathcal{Z}_\rho$ (so the meet and join operations are
the same as in $\mathcal{Z}_\rho$), identify the least and greatest
elements of $\mathcal{R}_\rho(A)$, and show that each pair $(B,B')$ of
cyclic flats in $\mathcal{R}_\rho(A)$ is a modular pair of flats, that
is, $\rho(B)+\rho(B') =\rho(B\cup B')+\rho(B\cap B')$.  (That equality
can fail if only one of $B$ or $B'$ is in $\mathcal{R}_\rho(A)$.)

\begin{thm}\label{thm:Rstructure}
  Let $\rho$ be a polymatroid on $E$.  For any subset $A$ of $E$,
  \begin{enumerate}
  \item[(I)] $\cl_\rho(\cy_\rho(A))$ and $\cy_\rho(\cl_\rho(A))$ are
    in $\mathcal{R}_\rho(A)$,
  \item[(II)] if $B\in \mathcal{R}_\rho(A)$, then
    $\cl_\rho(\cy_\rho(A))\subseteq B \subseteq
    \cy_\rho(\cl_\rho(A))$,
  \item[(III)] $\mathcal{R}_\rho(A)$ is a sublattice of
    $\mathcal{Z}_\rho$, and
  \item[(IV)] if $B,B'\in \mathcal{R}_\rho(A)$, then $(B,B')$ is a
    modular pair of flats.
  \end{enumerate}
\end{thm}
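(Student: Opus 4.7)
The plan is to encode $\mathcal{R}_\rho(A)$ as the set of minimizers of a single submodular function on $2^E$ and then read parts (I)--(IV) off from submodularity together with Lemmas \ref{lem:cycprops} and \ref{lem:recastR}. Define $f:2^E\to\mathbb{R}$ by
\[
f(X)=\rho(X)+\sum_{i\in A-X}\rho(i).
\]
Rewriting $f(X)=\rho(X)-\sum_{i\in A\cap X}\rho(i)+\sum_{i\in A}\rho(i)$ exhibits $f$ as $\rho$ plus a modular function plus a constant, so $f$ is submodular. Applying subadditivity of $\rho$ to $A=(A\cap X)\cup(A-X)$ gives $f(X)\ge\rho(A)$ for every $X\subseteq E$, with equality at $X=A$; thus $\rho(A)=\min_{X\subseteq E}f(X)$, and $\mathcal{R}_\rho(A)=\{B\in\mathcal{Z}_\rho:f(B)=\rho(A)\}$. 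A short computation using Lemma \ref{lem:clreform} gives $f(\cl_\rho(X))=f(X)-\sum_{i\in A\cap(\cl_\rho(X)-X)}\rho(i)\le f(X)$, and Lemma \ref{lem:cycprops}(2) gives $f(\cy_\rho(X))=f(X)-\sum_{i\in(X-\cy_\rho(X))-A}\rho(i)\le f(X)$.

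For part (I), I would verify conditions (1) and (2) of Lemma \ref{lem:recastR} at each of the two candidate cyclic flats. For $B=\cl_\rho(\cy_\rho(A))$, the inclusion $A-B\subseteq A-\cy_\rho(A)$ and Lemma \ref{lem:cycprops}(1) yield condition (1), while $\cy_\rho(A)\subseteq A\cap B\subseteq B$ together with $\rho(B)=\rho(\cy_\rho(A))$ (Lemma \ref{lem:clreform}) yields condition (2). For $B'=\cy_\rho(\cl_\rho(A))$, the crucial step is to show $\cl_\rho(A)-A\subseteq B'$: if $i\in\cl_\rho(A)-A$ has $\rho(i)>0$, then $\rho(A\cup i)=\rho(A)$ combined with submodularity forces $\rho(\cl_\rho(A))=\rho(\cl_\rho(A)-i)<\rho(\cl_\rho(A)-i)+\rho(i)$, so $i$ is not removed when forming $\cy_\rho(\cl_\rho(A))$ (Lemma \ref{lem:cycprops}(1)); the case $\rho(i)=0$ is immediate. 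Hence $A-B'=\cl_\rho(A)-B'$, and Lemma \ref{lem:cycprops}(2) applied to $\cl_\rho(A)$ delivers the required equation.

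For part (II), fix $B\in\mathcal{R}_\rho(A)$. Condition (2) of Lemma \ref{lem:recastR} together with $B=\cl_\rho(B)$ gives $B=\cl_\rho(A\cap B)\subseteq\cl_\rho(A)$; since $B$ is a cyclic subset of $\cl_\rho(A)$, it lies in $\cy_\rho(\cl_\rho(A))$. For the lower inclusion, since $B$ is a flat it suffices to prove $\cy_\rho(A)\subseteq B$. If $i\in\cy_\rho(A)-B$, then $i\in A-B$, so condition (1) of Lemma \ref{lem:recastR} forces $\rho(A)=\rho(A-i)+\rho(i)$; combined with $i\in\cy_\rho(A)$ this forces $\rho(i)=0$. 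But every flat of a polymatroid contains every loop (if $F\in\mathcal{F}_\rho$ and $\rho(i)=0$, then $\rho(F\cup i)\le\rho(F)+\rho(i)=\rho(F)$, so $i\in F$), giving the contradiction $i\in B$.

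Parts (III) and (IV) then fall out of submodularity of $f$. If $B,B'\in\mathcal{R}_\rho(A)$, then $f(B\cap B')+f(B\cup B')\le f(B)+f(B')=2\rho(A)$, and since each summand is at least $\rho(A)$ both equal $\rho(A)$. The identities from the first paragraph give $f(B\vee B')=f(\cl_\rho(B\cup B'))\le f(B\cup B')=\rho(A)$ and $f(B\wedge B')=f(\cy_\rho(B\cap B'))\le f(B\cap B')=\rho(A)$, and minimality forces equality, placing both in $\mathcal{R}_\rho(A)$ and proving (III). For (IV), $f(B)=f(B\cap B')$ rearranges, via the set identity $A-(B\cap B')=(A-B)\cup(A-B')$, to $\rho(B)-\rho(B\cap B')=\sum_{i\in(A\cap B)-B'}\rho(i)$, and $f(B')=f(B\cup B')$ likewise yields $\rho(B\cup B')-\rho(B')=\sum_{i\in(A\cap B)-B'}\rho(i)$; subtracting gives the modular-pair identity. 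The main conceptual obstacle is recognizing that $\rho(A)$ is the global minimum of $f$ over all of $2^E$ rather than merely over $\mathcal{Z}_\rho$; once that observation is in hand, parts (III) and (IV) are immediate from submodularity and the two monotonicity identities for $f$.
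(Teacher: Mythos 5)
Your proof is correct, and for parts (III) and (IV) it takes a genuinely different and arguably cleaner route than the paper. Parts (I) and (II) proceed essentially as the paper does, by checking the two conditions of Lemma \ref{lem:recastR} at the candidate flats via Lemmas \ref{lem:superbasic} and \ref{lem:cycprops} (your explicit handling of loops in the lower inclusion of (II), via the fact that every flat contains every rank-zero element, fills in a step the paper leaves implicit). The real divergence is your observation that $\rho(A)=\min_{X\subseteq E} f(X)$ where $f(X)=\rho(X)+\sum_{i\in A-X}\rho(i)$ is submodular on all of $2^E$, so the minimizers are closed under union and intersection, and the monotonicity identities $f(\cl_\rho(X))\leq f(X)$ and $f(\cy_\rho(X))\leq f(X)$ then push $B\cup B'$ and $B\cap B'$ down to the cyclic flats $B\vee B'$ and $B\wedge B'$ without leaving the minimizing set. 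The paper instead minimizes only over $\mathcal{Z}_\rho$, applies submodularity in the form of property (PZ3) of Theorem \ref{thm:pcfaxiom} directly to $B\vee B'$ and $B\wedge B'$, and must therefore establish an auxiliary counting inequality comparing $\sum_{i\in(B\cap B')-(B\wedge B')}\rho(i)+\sum_{i\in A-B}\rho(i)+\sum_{i\in A-B'}\rho(i)$ with the corresponding sums over $A-(B\vee B')$ and $A-(B\wedge B')$; your formulation absorbs that bookkeeping into the modularity of $X\mapsto\sum_{i\in A-X}\rho(i)$. Likewise your part (IV) falls out of the two equalities $f(B)=f(B\cap B')$ and $f(B')=f(B\cup B')$, whereas the paper extracts it by tracing which inequalities in its chain must be tight. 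Both arguments rest on the same lemmas; yours buys a more transparent reason why $\mathcal{R}_\rho(A)$ is a sublattice (it is the trace on $\mathcal{Z}_\rho$ of the lattice of minimizers of a submodular function), at the cost of introducing the auxiliary function $f$.
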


\begin{proof} 
  When $B$ is $\cy_\rho(A)$, property (1) in Lemma \ref{lem:recastR}
  holds by Lemma \ref{lem:cycprops}, as does property (2) since
  $B\subseteq A$.  Those properties then follow when $B$ is
  $\cl_\rho\bigl(\cy_\rho(A)\bigr)$ since
  $\bigl(\cl_\rho\bigl(\cy_\rho(A)\bigr)\bigr)\cap A =\cy_\rho(A)$, so
  $\cl_\rho\bigl(\cy_\rho(A)\bigr)\in \mathcal{R}_\rho(A)$.  Those
  properties clearly also hold when $B$ is $\cl_\rho(A)$.  From this,
  when $B$ is $\cy_\rho\bigl(\cl_\rho(A)\bigr)$, we get property (1)
  by Lemma \ref{lem:superbasic}, and property (2) by applying Lemma
  \ref{lem:superbasic} as elements of $A-B$ are removed from $A$ and
  $\cl_\rho(A)$. Thus,
  $\cy_\rho\bigl(\cl_\rho(A)\bigr)\in \mathcal{R}_\rho(A)$, so part
  (I) holds.
   
  Assume that $B\in \mathcal{R}_\rho(A)$.  Property (1) of Lemma
  \ref{lem:recastR} gives $\cy_\rho(A)\subseteq B$, so, since $B$ is a
  flat, $\cl_\rho(\cy_\rho(A))\subseteq B$.  Property (2) of Lemma
  \ref{lem:recastR} and the fact that $B$ is a flat give
  $B=\cl_\rho(A\cap B)\subseteq \cl_\rho(A)$, so, since $B$ is cyclic,
  $B\subseteq \cy_\rho(\cl_\rho(A))$.  Thus, part (II) holds.
  
  For assertion (III), we start with an inequality that we will use
  below.  Let $A$ be any subset of $E$ and let $B$ and $B'$ be in
  $\mathcal{Z}_\rho$.  We claim that
  $$\sum_{i\in(B\cap B')-(B\wedge B')}\rho(i)+
  \sum_{i\in A-B}\rho(i)+\sum_{i\in A-B'}\rho(i)\geq \sum_{i\in
    A-(B\vee B')}\rho(i)+ \sum_{i\in A-(B\wedge B')}\rho(i).$$ This
  inequality holds since
  \begin{itemize}
  \item $A-(B\vee B')$ is a subset of each of $A-(B\wedge B')$, $A-B$,
    and $A-B'$ (so terms $\rho(i)$ coming from its elements appear
    twice on each side of the inequality), and
  \item
    $\bigl(A-(B\wedge B')\bigr)-\bigl(A-(B\vee B')\bigr)\subseteq
    \bigl( (B\cap B')-(B\wedge B') \bigr)\cup (A-B)\cup (A-B')$ (so
    terms $\rho(i)$ that appear once on the right side also appear on
    the left side).
  \end{itemize}
  Now assume that $B,B'\in\mathcal{R}_\rho(A)$, so
  $$\rho(B)+ \sum_{i\in A-B}\rho(i) = \rho(A) = \rho(B')+ \sum_{i\in
    A-B'}\rho(i).$$ Then, using submodularity as formulated in
  property (PZ3) of Theorem \ref{thm:pcfaxiom}, along with the
  inequality above, we have
  \begin{align*}
    2\,\rho(A)
    & = \rho(B)+\rho(B')+ \sum_{i\in A-B}\rho(i) + \sum_{i\in
      A-B'}\rho(i)\\
    & \geq  \rho(B\vee B')+\rho(B\wedge B')+
      \sum_{i\in(B\cap B')-(B\wedge B')}\rho(i)
      +\sum_{i\in A-B}\rho(i) + \sum_{i\in A-B'}\rho(i)\\
    & \geq \rho(B\vee B')+\rho(B\wedge B')+ \sum_{i\in A-(B\vee
      B')}\rho(i) + \sum_{i\in  A-(B\wedge B')}\rho(i).
  \end{align*}
  Since
  $$\rho(B\vee B')+ \sum_{i\in A-(B\vee
    B')}\rho(i) \geq \rho(A) \qquad \text{and} \qquad \rho(B\wedge
  B')+ \sum_{i\in A-(B\wedge B')}\rho(i)\geq \rho(A),$$ the inequality
  above forces these inequalities to be equalities, which proves
  assertion (III).  Moreover, all inequalities in the argument above
  must be equalities, so equality holds in (PZ3) for $B$ and $B'$.
  Now $ \rho(B\cup B') =\rho(\cl_\rho(B\cup B')) = \rho(B\vee B')$ and
  $$\rho(B\cap B') =\rho(B\wedge B')+ \sum_{i\in(B\cap B')-(B\wedge
    B')}\rho(i) $$ by part (I) since $B\cap B'\in\mathcal{F}_\rho$ and
  $B\wedge B'=\cy_\rho(B\cap B')\in \mathcal{R}_\rho(B\cap B')$, so
  assertion (IV) follows.
\end{proof}

While $\mathcal{R}_\rho(A)$ is a sublattice of $\mathcal{Z}_\rho$, it
might not be an interval in $\mathcal{Z}_\rho$, as taking $A$ to be a
basis of the Fano plane shows.  The corollary below is immediate from
property (II).

\begin{cor}\label{cor:RforCyc}
  If $A\in\mathcal{Z}_\rho$, then $\mathcal{R}_\rho(A)=\{A\}$.
\end{cor}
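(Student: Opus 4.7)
The plan is to derive this as a direct consequence of part (II) of Theorem \ref{thm:Rstructure}, which is the tightest general bracketing of $\mathcal{R}_\rho(A)$ available. The only thing to notice is that when $A$ is already a cyclic flat, the lower and upper bounds in (II) collapse to $A$ itself.

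First I would argue that $\mathcal{R}_\rho(A)$ is nonempty, which is clear since $A$ itself lies in $\mathcal{R}_\rho(A)$: the defining equality
\[
\rho(A)=\rho(A)+\sum_{i\in A-A}\rho(i)
\]
is trivial, and $A\in\mathcal{Z}_\rho$ by hypothesis. (Alternatively, this follows from part (I) applied to $A$, together with the two observations used below.)

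Next I would record two immediate identities from the hypothesis $A\in\mathcal{Z}_\rho$: since $A$ is a flat, $\cl_\rho(A)=A$, and since $A$ is cyclic, $\cy_\rho(A)=A$. Combining these gives $\cl_\rho(\cy_\rho(A))=A$ and $\cy_\rho(\cl_\rho(A))=A$. Now take any $B\in \mathcal{R}_\rho(A)$. Part (II) of Theorem \ref{thm:Rstructure} yields
\[
A=\cl_\rho(\cy_\rho(A))\subseteq B\subseteq \cy_\rho(\cl_\rho(A))=A,
\]
so $B=A$, which finishes the proof.

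There is no real obstacle here; the entire content has been packaged into Theorem \ref{thm:Rstructure}. The only subtlety worth flagging is that one uses both halves of the containment in (II): the lower bound forces $A\subseteq B$ (which one could also deduce directly from property (1) of Lemma \ref{lem:recastR} since $\cy_\rho(A)=A$), while the upper bound forces $B\subseteq A$ (which one could also deduce from property (2) of Lemma \ref{lem:recastR} together with $\cl_\rho(A)=A$). Either way, the cyclic-flat hypothesis on $A$ makes the sandwich in (II) degenerate, which is exactly what is needed.
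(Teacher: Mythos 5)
Your proof is correct and follows exactly the route the paper intends: the paper states that the corollary ``is immediate from property (II),'' and your argument simply spells out why, namely that $A\in\mathcal{Z}_\rho$ forces $\cl_\rho(\cy_\rho(A))=A=\cy_\rho(\cl_\rho(A))$, so the sandwich in (II) collapses. Your additional observation that $A\in\mathcal{R}_\rho(A)$ (so the set is nonempty) is a harmless and correct bit of extra care.
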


\bibliographystyle{alpha}

\end{document}